\newtheorem{thm}{Theorem}[section]
\newtheorem{cor}[thm]{Corollary}
\newtheorem{lem}[thm]{Lemma}
\newtheorem{prop}[thm]{Proposition}
\theoremstyle{definition}
\newtheorem{defn}[thm]{Definition}
\newtheorem{conj}{Conjecture}
\newtheorem{example}[thm]{Example}
\theoremstyle{remark}
\newtheorem{rem}[thm]{Remark}
\numberwithin{equation}{section}
\newcommand{\Z}{\mathbb Z}
\newcommand{\C}{\mathbb C}
\newcommand{\R}{\mathbb R}
\newcommand{\N}{\mathbb N}
\newcommand{\Pro}{\mathbb P}
\newcommand{\bbf}{\overline{b}}
\newcommand{\gr}{\mathrm{gr}}
\newcommand{\MT}{\mathcal{MT}}
\newcommand{\M}{\mathcal{M}}
\newcommand{\Der}{\mathrm{Der}}
\newcommand{\cusp}{\mathrm{cusp}}
\newcommand{\zetam}{\zeta^{ \mathfrak{m}}}
\newcommand{\stu}{*}
\newcommand{\ad}{\mathrm{ad}}
\newcommand{\studot}{\underline{\cdot}\,}
\newcommand{\cc}{\mathfrak{c}}
\newcommand{\eact}{\circledast}
\newcommand{\Q}{\mathbb Q}
\newcommand{\To}{\longrightarrow}
\newcommand{\G}{\mathbb{G}}
\newcommand{\x}{\mathsf{x}}
\newcommand{\tone}{\overset{\rightarrow}{1}\!}
\newcommand{\opi}{{}_0 \Pi_{1}}
\newcommand{\ipi}{{}_1 \Pi_{1}}
\newcommand{\ue}{\mathfrak{u}^{\mathrm{geom}}}
\newcommand{\Or}{\mathcal{O}}
\newcommand{\g}{\mathfrak{g}^{\mathfrak{m}}}
\newcommand{\circb}{\, \underline{\circ}\, }
\newcommand{\Res}{\mathrm{Res}}
\newcommand{\SL}{\mathrm{SL}}
\newcommand{\ls}{\mathfrak{ls}}
\newcommand{\mm}{\mathfrak{m} }
\newcommand{\HH}{\mathbb{H} }
\newcommand{\e}{\mathbf{e}}
\newcommand{\Lie}{\mathrm{Lie}\,}
\newcommand{\LL}{\mathbb{L}}
\newcommand{\dmr}{\mathfrak{dmr}}
\newcommand{\per}{\mathrm{per}}
\newcommand{\PP}{\mathcal{P}}
\newcommand{\dg}{\mathfrak{d}}
\newcommand{\hsigma}{\underline{\sigma}}
\newcommand{\rel}{\mathrm{rel}}
\newcommand{\eis}{\mathrm{eis}}
\newcommand*{\longhookrightarrow}{\ensuremath{\lhook\joinrel\relbar\joinrel\rightarrow}}
\begin{document}
\author{Francis Brown}
\begin{title}[Zeta elements   and  the   Lie algebra  of $\pi^1(E \backslash 0)$]{Zeta elements in depth 3 and  the   fundamental Lie algebra of a punctured  elliptic curve}\end{title}
\maketitle

\begin{abstract}  
This paper draws   connections between the double shuffle equations and structure of associators;
 universal mixed elliptic motives as defined by Hain and Matsumoto; and the Rankin-Selberg method for modular forms for $\SL_2(\Z)$.  We write down explicit formulae for zeta elements $\sigma_{2n-1}$ (generators of the Tannaka Lie algebra of the category of mixed Tate motives over $\Z$)  in depths up to four, give  applications to the Broadhurst-Kreimer conjecture, and  completely  solve 
 the double shuffle equations for multiple zeta values in depths two and three.
\end{abstract}

\section{Introduction}

  The  theme  of this paper is that certain constructions relating to the motivic fundamental group of the projective line minus 3 points, which are inherently ambiguous,  
  can be  explicitly determined, and simplified, by passing to  genus one.

 The main result  can be viewed on the following three different levels. 
 
 \subsection{The fundamental group of $\Pro^1 \backslash \{0,1,\infty\}$} The de Rham   fundamental group
 $$\ipi=\pi_1^{dR}( \Pro^1\backslash \{0,1,\infty\}, \tone_1)$$
 of  $\Pro^1\backslash \{0,1,\infty\}$ with tangential base point the unit tangent vector at $1$
 is a prounipotent affine group scheme over $\Q$. Its graded Lie algebra  is the free Lie algebra $\LL(\x_0,\x_1)$
 on two generators $\x_0,\x_1$ corresponding to loops around $0$ and $1$.   Since $\ipi$ is a (pro) object in the category of mixed Tate motives over $\Z$, it admits  an action of the Tannakian fundamental group $G^{dR}_{\MT(\Z)}$. Denote the graded Lie algebra of the latter by 
  \begin{equation} \label{introgdef}
  \g = \LL\langle \sigma_3, \sigma_5, \ldots \rangle\ .
  \end{equation}
  It is the free graded Lie  algebra generated by non-canonical elements
 $\sigma_{2n+1}$  in degree $-2n-1$ for $n\geq 1$.
  We obtain 
a morphism of Lie algebras
 \begin{equation} \label{introgtoDer1}
  i_0:  \g \To \Der^1\,  \LL(\x_0,\x_1) 
 \end{equation}
  where 
   $\Der^1\, \LL(\x_0,\x_1)$ denotes the set of derivations which send $\x_1$ to $0$. Furthermore, we know that 
 $(\ref{introgtoDer1})$ factors through a morphism
 $$i : \g \To \LL(\x_0,\x_1)$$
 and $(\ref{introgtoDer1})$ maps  $\sigma \in \g$  to the derivation which sends  $\x_0 \mapsto [i(\sigma), \x_0]$ and $\x_1$ to $0$.
 The main result of \cite{BrMTZ}  states that $i$ is injective, and therefore enables us to expand  elements $\sigma\in \g$ in `coordinates' $\x_0$ and $\x_1$. It is an important problem to characterise the image of the map $i$ and to describe the $i(\sigma_{2n-1})$ as explicitly as possible.  It is known by the work of Racinet that the image of $i$ is contained in the Lie algebra $\dmr_0$ of solutions to the double shuffle equations. It is 
 also contained in the space of solutions to Drinfeld's associator equations, which by a result of Furusho \cite{Fu},  are contained in $\dmr_0$.

  It is well-known that
    \begin{equation} \label{introCannorm} 
 i(\sigma_{2n+1}) =  \ad(\x_0)^{2n} \x_1 + \hbox{ terms of degree } \geq 2 \hbox{ in } \x_1\  ,
 \end{equation} 
This can be seen as follows.
   The Drinfeld associator is
    the formal power  series which is the generating series of shuffle-regularised multiple zeta values, 
 $$Z = \sum_{ w\in \{\x_0,\x_1\}^{\times}} \zeta(w) w  \qquad \in \quad \R \langle \langle \x_0, \x_1\rangle \rangle \ ;$$
and is easily computed  explicitly to first order in $\x_1$:
 $$ Z \equiv    \sum_{n\geq 2}\,  \zeta(n) \, \ad(\x_0)^{n-1} \x_1  \pmod{ \hbox{ terms of degree } \geq 2 \hbox{ in } \x_1}\ . $$
The coefficients of the odd zeta values in degree $2n+1$ are congruent to $i(\sigma_{2n+1})$, 
 but very little is known in general  about the coefficients of $i(\sigma_{2n+1})$ of  higher degrees $\geq 3$ in the $\x_1$. 
 In this paper, we show:
\begin{thm}
 \begin{enumerate}
 \item That there is  a  choice of generators $\sigma^c_{2n+1}\in \g$  which is given by an  explicit formula $(\ref{introexplicitforsigma})$  modulo terms of degree $\geq 5$ in $\x_1$.
 \item  That there is  a  rational associator   $\tau$ which is given by an explicit formula modulo terms of degree $\geq 4$ in $\x_1$. It computes the coefficients of the even zeta values (powers of $\pi$) in $Z$ in this range.
  \end{enumerate} 
 \end{thm}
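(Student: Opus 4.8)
The plan is to pin down $i(\sigma_{2n+1})$ one power of $\x_1$ at a time, exploiting the fact recalled above that $i(\g)\subset\dmr_0$. Write $i(\sigma_{2n+1})=\sum_{d\geq 1}\rho_d$ with $\rho_d$ homogeneous of degree $d$ in $\x_1$, so that $\rho_1=\ad(\x_0)^{2n}\x_1$ is fixed by $(\ref{introCannorm})$. Since the shuffle and stuffle products both respect the depth filtration, for each $d$ the double shuffle equations impose on $\rho_d$ a system of \emph{linearized} double shuffle relations whose right-hand side is assembled from the already-determined lower parts $\rho_1,\dots,\rho_{d-1}$. The first step is therefore to write these linearized systems explicitly in depths $2,3,4$ and to determine their solution spaces in each weight.

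The second step identifies those solution spaces with period polynomials. By the depth-graded analysis of the double shuffle Lie algebra (in the line of Ihara--Kaneko--Zagier and Goncharov), the linearized relations in fixed weight and depth are governed by period polynomials of cusp forms for $\SL_2(\Z)$; the Broadhurst--Kreimer count predicts that genuinely new, modular solutions appear beginning in these depths. I would diagonalize each system, exhibit one particular solution in closed form with coefficients expressed through Bernoulli numbers, and then observe that double shuffle alone does not fix $\sigma_{2n+1}$: the freeness of $\g$ together with the cuspidal period-polynomial freedom leaves a real ambiguity. To select the distinguished generators $\sigma^c_{2n+1}$ I would pass to genus one and use the action of $\g$ on the fundamental Lie algebra of the punctured elliptic curve, i.e.\ the universal mixed elliptic motives of Hain--Matsumoto. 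This action rigidifies the normalization, and the constants it forces are exactly those produced by the Rankin--Selberg method; matching the two computations yields the explicit formula $(\ref{introexplicitforsigma})$ valid modulo degree $\geq 5$ in $\x_1$, which is assertion (1).

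For part (2) I would construct the rational associator $\tau$ in parallel. The plan is to build a group-like series with rational coefficients satisfying Drinfeld's pentagon and hexagon equations to the required order, starting from the Lie element $\sum\sigma^c_{2n+1}$ and correcting it. The key structural fact is that the coefficients of the even zeta values, i.e.\ the powers of $\pi$ (equivalently the powers of $\zeta(2)$), in the genuine associator $Z$ are forced by the hexagon equation from $\zeta(2)$ alone; hence the explicit low-depth description of $\tau$ determines them. One then verifies, modulo degree $\geq 4$ in $\x_1$, that $\tau$ reproduces these coefficients, which gives the asserted evaluation of the even-zeta part of $Z$ in that range.

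The hard part will be depths $3$ and $4$, where the cuspidal contributions enter and one must cleanly separate the genuine $i(\sigma_{2n+1})$ from the period-polynomial freedom. The Rankin--Selberg evaluation of the relevant periods, and the verification that the elliptic normalization is consistent with the double shuffle solution in these depths, is the technical heart of the argument; concretely, the depth-$4$ computation in (1) and the depth-$3$ control of the powers of $\pi$ in (2) are where the proof becomes genuinely delicate.
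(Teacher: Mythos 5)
Your skeleton --- solve the double shuffle equations depth by depth, with the linearized equations governing each new layer and period polynomials/Bernoulli numbers entering --- is the right starting point, but the proposal has several concrete gaps where the actual work lives. First, you never produce the explicit solution: the paper's key device is to enlarge the ring to \emph{rational functions with poles}, introduce the canonical weight-zero polar solutions $s^{(1)} = 1/(2x_1)$ and $s^{(2)}$ of $(\ref{sdef})$, build $\xi_{2n+1}^{(r)} $ for $r\le 3$ by bracketing $x_1^{2n}$ against them, and then prove that the specific combination $\underline{\xi}_{2n+1}+\sum_{a+b=n}\tfrac{1}{2b}\{\underline{\xi}_{2a+1},\{\underline{\xi}_{2b+1},\underline{\xi}_{-1}\}\}$ is pole-free via a residue computation resting on the Bernoulli functional identity $(\ref{bfuncid})$. ``Diagonalize the system and exhibit a particular solution'' is precisely the step that needs this construction. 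Second, your mechanism for canonicity is not the one that works: you propose to rigidify the choice by the action on $\pi_1$ of the punctured elliptic curve, but the paper is explicit that the proof of $(\ref{introexplicitforsigma})$ ``makes no reference to the first-order Tate curve''; the genus-one picture and the Rankin--Selberg computation are the \emph{motivation} for guessing the coefficients, while the proof is purely combinatorial. Third, having a double shuffle solution only places you in $\dmr_0$; to conclude that $\sigma^c_{2n+1}$ lies in $i(\g)$ you need Goncharov's theorem that $\dg^3\cong\ls^3$ (depth-$3$ solutions are motivic), which your argument omits entirely.

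You also misplace the difficulty. The passage from depth $3$ to ``modulo degree $\geq 5$ in $\x_1$'' in part (1), and the determination of $\tau^{(3)}$ from $\tau^{(2)}$ in part (2), are \emph{not} delicate: they follow from the depth-parity theorem ($\ls^d_n=0$ when $n\equiv d+1 \bmod 2$), which forces the depth-$4$ odd-weight and depth-$3$ even-weight components. The cuspidal period-polynomial freedom you invoke concerns the depth-$4$ generators $\cc(P)$ of the Broadhurst--Kreimer discussion, not the normalization of $\sigma^c_{2n+1}$ in this range. Finally, for part (2) the paper does not go through the pentagon and hexagon equations at all: $\tau$ is built as an explicit solution of the \emph{full} double shuffle equations from the Bernoulli generating series $b_1,b_2$, twisted by $s^{(1)},s^{(2)}$ and corrected by counterterms in the $\xi_{2n+1}$; and the statement that $\tau$ computes the even-zeta coefficients of $Z$ comes from the torsor structure under $\mathrm{DMR}_0$ (whose Lie algebra is generated in odd degrees), not from the claim that the hexagon equation forces these coefficients from $\zeta(2)$ alone --- a claim you would in any case have to justify.
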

 Statement $(1)$ is surprising because the choice of generators $\sigma_{2n+1}$ are \emph{a priori} only well-defined up to addition 
 of higher order commutators of $\sigma_{2m+1}$.  The key point is that by passing to genus $1$, we find canonical coordinates on 
 $\g$ which enable us to  fix  the  commutators $[\sigma_{2a+1}, [\sigma_{2b+1}, \sigma_{2c+1}]]$ uniquely.  A similar story holds for $(2)$.

  In so doing, we discover  that it is more convenient to consider a different normalisation for the $\sigma_{2n+1}$ from the canonical normalisation $(\ref{introCannorm})$, which we call   the \emph{heretical normalisation}
  \begin{equation} \label{introHereticalnorm} 
 \hsigma_{2n+1} = {B_{2n} \over (2n)!}\, \ad(\x_0)^{2n} \x_1 + \hbox{ terms of degree } \geq 2 \hbox{ in } \x_1\  ,
 \end{equation} 
 where $B_{2n}$ is the $2n^{\mathrm{th}}$ Bernoulli number.   Objects which are normalised according to the heretical normalisation 
 will be underscored.

\subsection{The fundamental group of the first-order Tate curve}
Let $E^{\times}_{\partial/\partial q}$ denote the fiber of the universal elliptic curve $\mathcal{M}_{1,2} \rightarrow \mathcal{M}_{1,1}$ over the tangential base point
${\partial \over \partial q}$ on $\mathcal{M}_{1,1}$, where $\mathcal{M}_{g,n}$ denotes the moduli space of curves of genus $g$ with $n$ marked points.  
In a future paper with Hain, we shall show (as suggested in \cite{MEM}) that its de Rham fundamental group 
\begin{equation} \label{introPPdefn} 
\PP =  \pi_1^{dR} ( E_{\partial/\partial q}^{\times}, \tone_1)
\end{equation}
where $\tone_1$ is the tangent vector of length $1$ with respect to the canonical holomorphic coordinate $w$ on $E_{\partial/\partial q}^{\times}$,
is the de Rham realisation of a pro-object in the category of mixed Tate motives over $\Z$. 
Its associated bigraded  Lie algebra (bigraded for the weight  $W$ and relative monodromy-weight $M$  filtrations) is the free Lie algebra
on certain  canonical generators $a,b$. 
Correspondingly\footnote{If one thinks  of $\g$ as being bigraded for $M$ and $W$, with $W=M$,   then the map $i_1$ respects the $M$-grading, but not the $W$-grading, see \cite{MEM}.}, one obtains a morphism of Lie algebras
\begin{equation}  \label{introi1def}
i_1: \g \To \Der^{\Theta} \, \LL(a,b) 
\end{equation} 
 where $\Der^{\Theta}$ denotes the set of derivations $\delta$ such that $\delta(\Theta)=0$, where $\Theta = [a,b]$. 
 We shall show as a consequence of \cite{BrMTZ} that $(\ref{introi1def})$ is injective.

 On the other hand, there are distinguished   derivations $\varepsilon_{2n}^{\vee}\in \Der^{\Theta} \, \LL(a,b)$  whose action on $a$ are given by 
$$\varepsilon_{2n}^{\vee} a   =    \ad(a)^{2n} b \nonumber \qquad \hbox{ for }  n\geq 1\ . $$ 
They were first studied by Nakamura \cite{Na} in  a slightly different  context and rediscovered in \cite{CEE, LR}.  The  action of $\varepsilon_{2n}^{\vee}$ on $b$ is determined by the 
condition $\varepsilon_{2n}^{\vee} \Theta=0$ together with the fact that it is homogeneous of degree $2n$ in $a,b$.
The derivations $\varepsilon_{2n}^{\vee}$ are `geometric' in the sense that 
the relative  completion of 
$\SL_2(\Z)= \pi_1(\mathcal{M}_{1,1}, {\partial / \partial q})$ (or universal monodromy) 
 acts on $\LL(a,b)$ via the Lie algebra generated by the $\varepsilon_{2n}^{\vee}$ and their images $\ad(\varepsilon_0^{\vee})^k \varepsilon^{\vee}_{2n}$ under the adjoint action of
 $$\varepsilon_0^{\vee} \in \Der^{\Theta} \,  \LL(a,b) \quad ,  \qquad \varepsilon_0^{\vee}(a)=b  \quad \varepsilon_0^{\vee}(b)=0 \ .$$
 Denote the Lie subalgebra  generated by the $\varepsilon_{2n}^{\vee}$, for all  $n\geq 0$, by   
 $$\ue  \subset \Der^{\Theta} \LL(a,b)\ .$$ 
 It is the bigraded  image of the universal monodromy \cite{HaGPS}.
 The elements $\varepsilon_{2n}^{\vee}$ satisfy many relations which were studied by Pollack \cite{Po}.
 The image of $\g$ in $\Der^{\Theta} \, \LL(a,b)$ under $(\ref{introi1def})$ is  by no means contained in $\ue$, but in low degrees with respect to $b$, the $\varepsilon_{2n}^{\vee}$ give canonical `coordinates' in which to write down the initial terms of elements $i_1(\sigma_{2n+1})$.
The motivic version of a formula due to Nakamura in the $\ell$-adic setting, is 
$$i_1( \sigma_{2n+1}) \equiv \varepsilon_{2n+2}^{\vee} \pmod {W_{-2n-3}}$$
for all $n \geq 1$. A more precise result can be obtained using the elements $\sigma^c_{2n+1}$. 
\begin{thm} \label{introthmsigmaasepsilons} Let $n\geq 2$. The elements $\sigma^c_{2n+1}$ 
satisfy
\begin{equation} \label{introexplicitsigmaasepsilons}
i_1(\underline{\sigma}_{2n+1}^c) \equiv \underline{\varepsilon}^{\vee}_{2n+2} +  
\sum_{a+b=n}  {1 \over 2 b }  [\underline{\varepsilon}^{\vee}_{2a+2}, [\underline{\varepsilon}^{\vee}_{2b+2}, \underline{\varepsilon}^{\vee}_{0} ]] \quad  \pmod{W_{-2n-5}}\ .
\end{equation}
where the $\underline{\varepsilon}^{\vee}_{2n}$ are the heretical normalisations  $(\ref{hereticalepsilon})$ of the $\varepsilon_{2n}^{\vee}$.
\end{thm}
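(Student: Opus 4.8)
The plan is to use the bigrading by weight $W$ and relative monodromy-weight $M$, under which (by the footnote) $i_1$ preserves $M$ but not $W$. On $\g$ one may take $W=M$, so $\underline{\sigma}^c_{2n+1}$ is pure of bidegree $(-(2n+1),-(2n+1))$ and its image $i_1(\underline{\sigma}^c_{2n+1})$ is homogeneous of $M$-degree $-(2n+1)$, decomposing into pieces of strictly decreasing $W$-degree. A direct count from $a\mapsto(W,M)=(-1,-1)$ and $b\mapsto(W,M)=(-1,0)$ gives $\underline{\varepsilon}^{\vee}_{2k}$ the bidegree $(-2k,-2k+1)$ and $\underline{\varepsilon}^{\vee}_0$ the bidegree $(0,1)$, so each bracketed $\underline{\varepsilon}^{\vee}$-factor raises the defect $M-W$ by exactly one; equivalently, the defect of a homogeneous derivation equals the number of $b$'s occurring in its value on $a$. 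Thus modulo $W_{-2n-5}$, i.e. in defect $\le 3$, only the defect-$1$ and defect-$3$ parts of $i_1(\underline{\sigma}^c_{2n+1})$ can survive, lying in $\gr^W_{-2n-2}$ and $\gr^W_{-2n-4}$. The intervening $\gr^W_{-2n-3}$ (defect $2$) is killed by a parity argument: the elliptic involution $[-1]$ sends $a\mapsto -a,\ b\mapsto -b$ and commutes with $i_1$ (being a motivic automorphism of the punctured curve), forcing $i_1(\underline{\sigma}^c_{2n+1})(a)$ to contain only odd numbers of $b$'s.

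With this reduction, the defect-$1$ part is handled by the motivic Nakamura formula quoted above: in the heretical normalisation the $\gr^W_{-2n-2}$-component of $i_1(\underline{\sigma}^c_{2n+1})$ is exactly $\underline{\varepsilon}^{\vee}_{2n+2}$. The whole content of the theorem is therefore the defect-$3$, i.e. $\gr^W_{-2n-4}$, component. The essential input for this is statement $(1)$ of the main theorem, which determines $\underline{\sigma}^c_{2n+1}$ in the coordinates $\x_0,\x_1$ modulo terms of $\x_1$-degree $\ge 5$, hence through depth four, comfortably enough to read off its depth-three part.

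To convert this into the $\underline{\varepsilon}^{\vee}$-coordinates I would use the comparison arising from the tangential degeneration of the universal elliptic curve at the cusp, which relates the Lie algebras $\LL(\x_0,\x_1)$ and $\LL(a,b)$ and expresses the geometric derivations $\underline{\varepsilon}^{\vee}_{2k}$ and the monodromy operator $\underline{\varepsilon}^{\vee}_0$ through $\x_0$ and $\x_1$. Reading this dictionary backwards turns the explicit depth-$\le 4$ formula for $\underline{\sigma}^c_{2n+1}$ into an expansion in iterated brackets of the $\underline{\varepsilon}^{\vee}_{2k}$. In parallel I would compute, modulo $W_{-2n-5}$, the value on $a$ of each candidate bracket $[\underline{\varepsilon}^{\vee}_{2a+2},[\underline{\varepsilon}^{\vee}_{2b+2},\underline{\varepsilon}^{\vee}_0]]$, using $\underline{\varepsilon}^{\vee}_0(a)=b$ and solving $\varepsilon^{\vee}_{2k}(\Theta)=0$ for each $\varepsilon^{\vee}_{2k}(b)$; matching the two expansions then pins down the coefficients and yields the stated sum $\sum_{a+b=n}\tfrac{1}{2b}[\underline{\varepsilon}^{\vee}_{2a+2},[\underline{\varepsilon}^{\vee}_{2b+2},\underline{\varepsilon}^{\vee}_0]]$.

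The main obstacle is this final matching. The defect-$3$ brackets are far from linearly independent — they obey the relations of Pollack \cite{Po} — so one must fix an explicit spanning set (for instance the values on $a$) and control the $\SL_2$-representation generated by the lowering operator $\underline{\varepsilon}^{\vee}_0$; the asymmetric weight $\tfrac{1}{2b}$ is precisely the coefficient produced by a single application of $\underline{\varepsilon}^{\vee}_0$ to $\underline{\varepsilon}^{\vee}_{2b+2}$ within its representation, and it is the heretical (Bernoulli) normalisation that absorbs the remaining combinatorial factors into this clean shape. A secondary difficulty is to check that the comparison map is compatible with the two depth filtrations (the $\x_1$-degree and the number of $b$'s) in the range needed, so that transporting the depth-three part of $\underline{\sigma}^c_{2n+1}$ produces no spurious lower-defect contributions modulo $W_{-2n-5}$.
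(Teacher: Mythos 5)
Your overall route is the paper's route: the theorem is deduced from the genus‑zero statement (the explicit depth $\leq 4$ formula for $\underline{\sigma}^c_{2n+1}$ in $\x_0,\x_1$, proved via double shuffle and Goncharov's depth‑3 theorem) by transporting it through the Hain comparison $\phi:\LL(\x_0,\x_1)^{\wedge}\to\LL(a,b)^{\wedge}$ of \S\ref{sectHainmorphism}, and your $(W,M)$/defect bookkeeping correctly identifies which graded pieces are at stake (it is equivalent to working modulo $B^4$, as in theorem \ref{thmsigmasepsilons}). The gap is in the step you yourself flag as the main obstacle: you have no effective mechanism for the transport. The difficulty is that the $\varepsilon^{\vee}_{2k+2}$ do \emph{not} preserve $\phi(\LL(\x_0,\x_1)^{\wedge})$ and hence do not descend to derivations of $\LL(\x_0,\x_1)$, so there is no ``dictionary expressing the $\underline{\varepsilon}^{\vee}_{2k}$ through $\x_0$ and $\x_1$'' to read backwards. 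The paper's device is to enlarge the commutative power series representation by poles (\S\ref{sectCommSeries}): in that polar representation $\varepsilon^{\vee}_{2k+2}$, taken modulo $B^4$, descends exactly to the rational function $\xi_{2k+1}$ of \S\ref{sectChis}, and the descent is compatible with brackets. The genus‑one formula is then literally the term‑by‑term image of definition \ref{defnsigmac} under the correspondence $\xi_{2k+1}\leftrightarrow\varepsilon^{\vee}_{2k+2}$, $\xi_{-1}\leftrightarrow\varepsilon^{\vee}_0$, with theorem \ref{thmsigmasaremotivic} supplying motivicity. Your substitute — expanding the defect‑3 part in $\LL(a,b)$ and ``matching'' against the triple brackets to ``pin down the coefficients'' — cannot work as stated: the coefficients are prescribed by the theorem, the spanning set of brackets is linearly dependent (Pollack's relations), so matching determines nothing uniquely, and in any case you would need a uniform‑in‑$n$ identity, not a case‑by‑case coefficient comparison. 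Without the polar descent (or an equivalent structural correspondence) the argument does not close.

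Two smaller points. The vanishing of the defect‑2 (i.e.\ $\gr_B^2$) component is a genuine cancellation between $\phi^1$ applied to the depth‑1 part and $\phi^0$ applied to the (nonzero!) depth‑2 part of $i_0(\underline{\sigma}^c_{2n+1})$; in the paper it simply falls out of the degree‑2 equation in \S\ref{sectDrinGeom}, so your elliptic‑involution parity argument is not needed, and as stated it is incomplete (the involution moves the tangential base point $\tone_1$, so commutation with $i_1$ holds only up to inner automorphisms, which must be controlled before concluding that only odd powers of $b$ occur). Finally, the $\mathfrak{sl}_2$ heuristic for the coefficient $1/2b$ is motivation, not proof; in the paper that coefficient is already present in the genus‑zero formula and is simply carried across.
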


This theorem  is equivalent to an explicit formula for the $i_0(\sigma^c_{2n+1})  \in \LL(\x_0,\x_1)$ 
modulo terms of degree $\geq 5$ in $\x_1$, which is how it is proved here.
 This uses an explicit  morphism  from the de Rham  
fundamental group of  $\Pro^1\backslash \{0,1,\infty\}$ to  that of $E^{\times}$ which was written down by Hain. 
Since $i_0$ and $i_1$ are compatible with this morphism, the expansions of $\sigma^c_{2n+1}$ under $i_0$ and $i_1$
can be related to each other explicitly.

   \subsection{Rankin-Selberg method} The third way of understanding
   the elements $\sigma_{2n+1}^c$, and 
the starting point for this paper, came from the theory of iterated integrals of holomorphic modular forms  for $\SL_2(\Z)$. 
The   coefficients in equation $(\ref{introexplicitsigmaasepsilons})$ come from the computation  \cite{MMV} of the imaginary part of an iterated integral of two Eisenstein series using the Rankin-Selberg method. They turn out to be  the coefficients of $\zeta(2n-1)$ in 
 the convolution  of two Eisenstein series of different weights, which are products of Bernoulli numbers. Equivalently, they are proportional to  the coefficients in the odd period polynomials of  Eisenstein series. This is the motivation for the heretical normalisations $(\ref{introHereticalnorm})$ and the source of the 
 formula   $(\ref{introexplicitsigmaasepsilons})$.

   \subsection{Further remarks} We discuss  the methods used in this paper, and applications  to the double 
   shuffle equations and Broadhurst-Kreimer conjecture.
   
   \subsubsection{Commutative power series and anatomy of associators}
One tool which we use extensively  is the method of commutative power series and  is  closely related to Ecalle's theory of moulds \cite{Ecalle1, Ecalle2}. Let $\LL(u,v)$ be the free graded
Lie algebra generated by two elements $u, v$.  It is graded for the degree in $v$.
Elements of $v$-degree $r\geq 1$  in the tensor algebra $T(u,v)$ can be encoded by commutative polynomials
\begin{eqnarray}
\rho:    \gr^r_v \, T(u,v) &  {\longhookrightarrow}  &\Q [ x_1,\ldots, x_r]  \qquad r\geq 1 \\ 
u^{i_0} v u^{i_1} \ldots v u^{i_r} & \mapsto & x_1^{i_1} \ldots x_r^{i_r} \nonumber
\end{eqnarray} 
We apply this construction to $(u,v) = (\x_0,\x_1)$ and $(u,v)= (a,b)$, and their derivation algebras. 
In certain contexts, we shall explain  that it is natural to rescale the morphism $\rho$ by introducing polynomial denominators. In this manner, elements 
of $\Der^1 \, \LL(\x_0,\x_1)$ and $\Der^{\Theta} \, \LL(a,b)$ are encoded by sequences of \emph{rational functions} in $x_1,\ldots, x_r$.  
The double shuffle equations (defining equations for the Lie algebra $\dmr_0$) can be translated into functional 
equations for commutative power series via the map $\rho$.  A surprising discovery is that there exist  canonical solutions if one allows poles:

\begin{thm} \cite{AA} There exist explicit solutions to the double shuffle equations in the space of rational functions
in all weights and all depths.
\end{thm}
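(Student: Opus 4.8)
The plan is to reduce the statement to two explicit families of functional equations on sequences of rational functions and then to solve them in closed form. First I would make precise the translation, indicated before the theorem, of the defining equations of $\dmr_0$ into conditions on a mould: writing $f_r \in \Q(x_1,\ldots,x_r)$ for the image under the denominator-rescaled map $\rho$ of the depth-$r$ component of a putative solution, the two products become, in Ecalle's terminology, an \emph{alternality} condition coming from the $\sha$-shuffle and an \emph{alternility} condition coming from the stuffle. Concretely, alternality is the vanishing $\sum_w f_r(w)=0$ summed over all shuffles of $(x_1,\ldots,x_p)$ with $(x_{p+1},\ldots,x_{p+q})$ for $p+q=r$, $p,q\geq 1$; alternility is the analogous vanishing in which the quasi-shuffle additionally contributes contraction terms carrying simple poles along the hyperplanes produced when adjacent variables collide. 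It is precisely the rescaling of $\rho$ by polynomial denominators that places both conditions inside the category of rational functions, so the theorem becomes: exhibit, in every weight and depth, a single $f_r$ satisfying both conditions at once.

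Next I would write down the candidate solution. The poles forced by alternility suggest building $f_r$ out of the elementary fractions attached to the partial sums $x_1+\cdots+x_i$, together with the $1/x_i$. A natural choice is to take $f_r$ to be a symmetrisation --- over $\mathfrak{S}_r$, or over the cyclic and ``push'' symmetries that the double shuffle problem is known to respect --- of a basic rational function such as $\prod_{i=1}^{r} 1/(x_1+\cdots+x_i)$, weighted so as to land in the prescribed weight. This function is the commutative-power-series shadow of a group-like (exponential) element, which is what will make the shuffle identity below hold.

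I would then verify the alternality condition, and I expect this to be the easy half. It should reduce to a classical partial-fraction identity: the shuffle sum of products of partial-sum fractions telescopes to zero, equivalently the generating mould of these fractions is $\sha$-symmetral and its logarithm is alternal. Once the ansatz is fixed I anticipate that this holds formally, uniformly in weight and depth.

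The main obstacle is the alternility condition, and it is here that the poles are indispensable: the \emph{polynomial} solutions of the full double shuffle system are governed by the period polynomials of cusp forms and are therefore sparse, whereas admitting poles is exactly what produces an explicit solution in all weights and all depths. The difficulty is that the quasi-shuffle contractions couple depth $r$ to depth $r-1$, so checking alternility amounts to matching the residue of $f_r$ along each contraction hyperplane with a contracted copy of $f_{r-1}$. I would establish this by induction on the depth, computing $\Res$ of the ansatz along each such hyperplane and checking that it reproduces the lower-depth solution up to the combinatorial factor dictated by the quasi-shuffle. Making these residue computations uniform in $r$, rather than verifying them by hand in the first few depths, is the crux of the argument, and I expect that it is exactly this requirement that pins down the precise symmetrisation and normalisation in the ansatz.
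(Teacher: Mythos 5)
Your reduction of the problem to alternality and alternility conditions on sequences of rational functions is the right starting point, and you correctly identify that admitting poles is what makes solutions exist in every weight and depth (as opposed to the sparse polynomial solutions governed by period polynomials). But the theorem asserts the existence of \emph{explicit} solutions, and your proposal never actually exhibits one: the ansatz is left as ``a symmetrisation \ldots\ of a basic rational function such as $\prod_{i=1}^{r}1/(x_1+\cdots+x_i)$'', and you concede at the end that determining the precise symmetrisation and normalisation is the crux. That determination \emph{is} the content of the theorem, so as written this is a genuine gap, not a verification left to the reader. There is also a weight-counting obstruction to the candidate itself: the product $\prod_{i=1}^{r}1/(x_1+\cdots+x_i)$ has degree $-r$ in depth $r$, which is the degree profile of a solution of \emph{fixed} weight $0$ in every depth; no rescaling of this single function can produce the depth-$r$ component of a weight-$(2n+1)$ solution, which must have degree $2n+1-r$ and depth-$1$ part $x_1^{2n}$.

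The construction the paper has in mind (from \cite{AA}, visible in \S\ref{sectChis} and remark \ref{rem extend}) resolves exactly this point by a two-step mechanism your proposal is missing. One first builds a single weight-$0$ element $s=(s^{(1)},s^{(2)},\ldots)$ with $s^{(1)}=1/(2x_1)$ and $s^{(2)}=\tfrac{1}{12}\bigl(1/(x_1x_2)+1/(x_2(x_1-x_2))\bigr)$ as in $(\ref{sdef})$ --- essentially your product ansatz, correctly normalised --- and checks the double shuffle equations for it in low depth. One then sets $\xi_{2n+1}=\exp(\ad(s))\,x_1^{2n}$, where $\ad$ is taken with respect to the Ihara bracket $\{\ ,\ \}$. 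The fact that these are solutions in all weights and depths is not established by a depth-by-depth residue induction, but follows from the rational-function version of Racinet's theorem (theorem \ref{thmRacinet}): the space of solutions is closed under $\{\ ,\ \}$. This Lie-theoretic closure is the key leverage your plan omits; it converts the infinitely many coupled alternility identities you propose to verify by induction into a one-time check on the seed elements $s$ and $x_1^{2n}$. Without isolating some such reduction, your direct approach amounts to re-proving Racinet's theorem by hand in every depth, and the ``uniform residue computation'' you flag as the crux is precisely the part that has not been supplied.
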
 

There is a particular family of solutions we denote by  $\xi^{(r)}_{2n+1} \in \Q(x_1,\ldots, x_r)$ in weight $2n+1\geq 3$. Their components in  depths
$r=1,2$  are polynomials, but they have
poles in depths  $r\geq 3$. Furthermore, a new element emerges in weight $-1$  which we denote by 
 $\xi^{(r)}_{-1} \in \Q(x_1,\ldots, x_r) $.  The idea of \cite{AA}  is to expand the polynomial representation of  zeta ements $\rho(i(\sigma_{2n-1}))$ in terms of the $\xi_{2n+1}$.
This `anatomy'  can be computed explicitly in low degrees:
 
 \begin{thm} \label{introthmanatomy} If $\{ \  , \  \}$ denotes the Ihara bracket, extended to the setting of rational functions, then the canonical zeta elements up
 to depth 4 are given, in the heretical normalisation,  by the simple formula:
\begin{equation} \label{introexplicitforsigma} 
\rho(i( \underline{\sigma}^c_{2n+1} ))\equiv \underline{\xi}_{2n+1} + \sum_{a+b=n}  {1 \over 2 b } \{ \underline{\xi}_{2a+1}, \{\underline{\xi}_{2b+1}, \underline{\xi}_{-1} \}\} \quad  \pmod{\hbox{depths } \geq 5}\ .
 \end{equation}
 \end{thm}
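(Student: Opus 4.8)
The plan is to expand the commutative-polynomial representation $\rho(i(\underline{\sigma}^c_{2n+1}))$ inside the Lie algebra generated, under the extended Ihara bracket $\{\ ,\ \}$, by the canonical rational solutions $\underline{\xi}_{2m+1}$ and the weight $-1$ element $\underline{\xi}_{-1}$, and to identify the coefficients depth by depth up to depth four. Two structural facts make both sides of $(\ref{introexplicitforsigma})$ legitimate double shuffle solutions. First, by the theorems of Racinet and Furusho \cite{Fu} one has $i(\g)\subset\dmr_0$, so $\rho(i(\underline{\sigma}^c_{2n+1}))$ solves the double shuffle equations in every depth. Second, by \cite{AA} the $\underline{\xi}$ are themselves solutions and $\dmr_0$ is closed under the Ihara bracket extended to rational functions, so every nested bracket of the $\underline{\xi}$ is again a solution. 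The task is therefore reduced to matching the depth-graded components $\rho(\,\cdot\,)^{(r)}$ of the two sides for $r=1,2,3,4$.

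I would first dispose of depths one and two. The correction term $\{\underline{\xi}_{2a+1},\{\underline{\xi}_{2b+1},\underline{\xi}_{-1}\}\}$ is a double Ihara bracket of solutions whose leading components lie in depth one, so its own leading component sits in depth three; in depths one and two only $\underline{\xi}_{2n+1}$ contributes. In depth one both sides equal the heretically normalised monomial $\tfrac{B_{2n}}{(2n)!}\,x_1^{2n}$, by $(\ref{introHereticalnorm})$ and the definition of $\underline{\xi}_{2n+1}$. In depth two there is nothing to fix: a change of generators $\sigma_{2m+1}$ can only alter $\underline{\sigma}^c_{2n+1}$ by brackets of odd length $\geq 3$, whose leading depth is three, so the depth-two component of $i(\underline{\sigma}^c_{2n+1})$ is already canonical and agrees with $\underline{\xi}^{(2)}_{2n+1}$.

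The heart of the argument is depth three, the first depth in which the $\underline{\xi}$ acquire poles and the first in which $\underline{\sigma}^c_{2n+1}$ carries genuine information, namely the choice of the triple commutators $[\sigma_{2a+1},[\sigma_{2b+1},\sigma_{2c+1}]]$ fixed by the canonical normalisation. I would compute this residual constant through the genus-one dictionary provided by Hain's morphism: the element $\varepsilon^{\vee}_{2m+2}$ is encoded by $\underline{\xi}_{2m+1}$, the element $\varepsilon^{\vee}_0$ by $\underline{\xi}_{-1}$, and the bracket on $\Der^\Theta\LL(a,b)$ corresponds to the Ihara bracket, so that matching in depth three becomes exactly the identification of the coefficient of $\zeta(2n-1)$ in the Rankin--Selberg convolution of two Eisenstein series \cite{MMV}. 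This produces the value $\tfrac{1}{2b}$ — the very ratio for which the heretical normalisation $(\ref{hereticalepsilon})$ was designed — and simultaneously yields the elliptic form stated in Theorem \ref{introthmsigmaasepsilons}.

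The delicate step is the depth-four identity. Once $\tfrac{1}{2b}$ has been fixed in depth three there is no remaining freedom: the double bracket $\{\underline{\xi}_{2a+1},\{\underline{\xi}_{2b+1},\underline{\xi}_{-1}\}\}$ contributes simultaneously to depths three, four and five, so the depth-four part of the right-hand side is completely determined by the depth-three coefficient. The content of the theorem in depth four is accordingly a rigidity statement: the depth-four component of $\rho(i(\underline{\sigma}^c_{2n+1}))$ must be reproduced exactly by the depth-four components of $\underline{\xi}_{2n+1}$ and of the double bracket, with no further independent generator (in particular none associated to a cusp form) entering. I expect this to be the main obstacle, since it requires both the explicit rational-function shape of the $\underline{\xi}$ in depths three and four and a control of the dimension of double shuffle solutions in weight $2n+1$ and depth four: the poles of the $\underline{\xi}$ must cancel against their polynomial parts so that the nested Ihara brackets land back inside the polynomial image of $i$.
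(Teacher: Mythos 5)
Your proposal misplaces where the mathematical content of the theorem lies, and the two steps that actually carry the paper's proof are missing. In the paper, formula $(\ref{introexplicitforsigma})$ is the \emph{definition} of $\underline{\sigma}^c_{2n+1}$ (definition \ref{defnsigmac}); there is no prior, independent normalisation against which to ``match coefficients depth by depth''. Consequently the theorem reduces to exactly two assertions: (i) the right-hand side, a priori a rational function with poles coming from $\underline{\xi}_{-1}$ (whose depth-one part is $\tfrac{1}{12}x_1^{-2}$) and from $\xi^{(3)}_{2m+1}$, is in fact polynomial, so that it defines an element of $\LL(\x_0,\x_1)/D^4$; and (ii) this element lies in the image of $i$. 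Step (i) is the residue computation of \S\ref{sectPolescancel}: after summing over $n$ the residue along $x_3=0$ collapses, via the functional equation $(\ref{bfuncid})$ for $b(x)=\tfrac{1}{e^x-1}+\tfrac{1}{2}$, to $(xb(x)-1)\star b(x)=\tfrac14(x_1-x_2)$, which kills the terms with $n\geq 2$; the remaining polar divisors are then removed using the shuffle and stuffle symmetries. This is precisely where the coefficients $\tfrac{1}{2b}$ and the heretical Bernoulli normalisation earn their keep, and you give no argument for it. Step (ii) is theorem \ref{thmsigmasaremotivic}: the right-hand side satisfies the double shuffle equations by (the rational-function version of) Racinet's theorem, and $i:D^1/D^4\g\to D^1/D^4\dmr_0$ is an isomorphism by Zagier's dimension count in depth $2$ and Goncharov's theorem in depth $3$. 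You invoke dimension control of double shuffle solutions only in depth four, where it is not the issue.

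Two further misattributions. Your depth-three argument via the Hain morphism and the Rankin--Selberg convolution of Eisenstein series is the paper's \emph{motivation} (\S\ref{sectfinal}), explicitly not its proof: the paper states that the theorem is proved by combinatorial methods and makes no reference to the first-order Tate curve, and turning the sketch of \S\ref{sectfinal} into a proof would require the motivic structure on the Eisenstein quotient of the relative completion, which is deferred to future joint work with Hain. And the depth-four step, which you flag as ``the main obstacle'' and a rigidity statement threatened by cusp forms, is immediate: by the depth-parity theorem (corollary \ref{corDepthParity}), $\ls^4_{2n+1}=0$ in odd weight, so the depth-four component of any double shuffle solution is uniquely determined by its components in depths $\leq 3$, and both sides of $(\ref{introexplicitforsigma})$ are such solutions. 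The cuspidal generators $\cc(P)$ live in depth four and \emph{even} weight and cannot interfere here.
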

 If one were to write this formula in the canonical, as opposed to heretical normalisations, one would find  coefficients given by products of Bernoulli numbers  in the sum in the right-hand side. These coefficients are essentially the coefficients in the odd period polynomial of Eisenstein series, a fact which emerges from \S\ref{sectfinal}. 

Theorem $\ref{introexplicitforsigma}$ is proved by combinatorial methods, and uses Goncharov's theorem that the solutions to the double
shuffle equations in depth 3 are motivic. It makes no reference to the first-order Tate curve.
It is more illuminating, however, to interpret this theorem by passing to genus $1$. Via the Hain morphism $\ref{sectHainmorphism}$, it turns out that the elements
$\xi_{2n+1}$  correspond in low depths  to the derivations $\varepsilon_{2n+2}^{\vee}$, and this proves that
theorems  $\ref{introthmanatomy}$ and  $\ref{introthmsigmaasepsilons}$ are equivalent.

      \subsubsection{Double shuffle equations}
Our elements $\sigma_{2n+1}^c$ are explicit solutions to the double shuffle equations in depths $\leq 4$ and odd weights.
We also construct, in \S\ref{secttau}, an explicit solution $\tau$ in depths $\leq 3$ and all even weights.  Using a theorem due to Goncharov
computing the dimension of the space of solutions to linearised double shuffle equations in depth 3 we deduce the

 \begin{thm}  Every solution to the regularised double shuffle equations in depths $\leq 4$ (odd weight) and depths $\leq 3$ (even weight) 
 can be expressed using the explicit elements $\sigma_{2n+1}^c$ and the rational associator $\tau$.
 \end{thm}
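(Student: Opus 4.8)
The plan is to pass to the depth-graded setting, reduce the statement to a dimension count, and use Goncharov's formula as the only non-explicit input. Write $\dmr_0$ for the Lie algebra of solutions to the regularised double shuffle equations, equipped with its depth (i.e. $\x_1$-degree) filtration $\dd_\bullet$, which is compatible with the Ihara bracket $\{\ ,\ \}$. On the associated graded $\gr^{\dd}\dmr_0$ the double shuffle relations linearise, and I denote the resulting bigraded (weight $w$, depth $d$) solution space by $\ls_{w,d}$. Since the candidate elements $\sigma^c_{2n+1}$ and $\tau$ genuinely lie in $\dmr_0$ (the former because the image of $i$ lands in $\dmr_0$, the latter because associators do), a standard induction on depth reduces the theorem to showing that their symbols, together with their iterated Ihara brackets, span $\ls_{w,d}$ in the ranges $d\leq 4$ ($w$ odd) and $d\leq 3$ ($w$ even): spanning of the associated graded then lifts to spanning of the filtered pieces.

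First I would record $\dim \ls_{w,d}$ in the relevant range. In depth $1$ one has $\dim \ls_{w,1}=1$ for odd $w\geq 3$ and $0$ otherwise, matching the single generator $\sigma^c_{w}$; the even-weight directions (powers of $\pi$) are supplied by the depth-$1$ part of $\tau$. In depth $2$ the space is classical and governed by odd period polynomials and the space $S_w$ of cusp forms, the even-weight solutions being spanned by the brackets $\{\sigma^c_{2a+1},\sigma^c_{2b+1}\}$ together with the depth-$2$ component of $\tau$. In depth $3$ I invoke Goncharov's theorem, which gives $\dim \ls_{w,3}$ explicitly and, through his result that depth-$3$ solutions are motivic, identifies them with combinations of the odd-weight triple brackets of the $\sigma^c_{2n+1}$ and the depth-$3$ component of the even-weight associator $\tau$. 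The explicit formula $(\ref{introexplicitforsigma})$ for $\rho(i(\underline{\sigma}^c_{2n+1}))$ is what makes these elements concrete enough to verify linear independence.

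The main obstacle is depth $4$ in odd weight, which lies beyond Goncharov's computation. Here the point is that no new generators appear: by parity an odd-weight depth-$4$ element is never a bracket of odd generators alone, and the content of the claim is that every element of $\ls_{w,4}$ ($w$ odd) is an Ihara bracket of lower-depth solutions — concretely, of the even-weight depth-$3$ part of $\tau$ with a single $\sigma^c_{2m+1}$, together with brackets coming from two depth-$2$ factors. The key step is to prove that the bracket map
\begin{equation*}
\{\ ,\ \}\colon \bigoplus_{w'+w''=w}\big(\ls_{w',3}\otimes \ls_{w'',1}\big)\ \oplus\ \bigoplus_{w'+w''=w}\big(\ls_{w',2}\otimes \ls_{w'',2}\big)\ \To\ \ls_{w,4}
\end{equation*}
is surjective, so that $\ls_{w,4}$ harbours no generators not already produced in depths $\leq 3$. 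I expect this to be the hardest point, since it requires controlling the depth-$4$ linearised equations directly rather than quoting an existing dimension formula; I would establish it either by a direct analysis of the linearised shuffle and stuffle relations in depth $4$, bounding $\dim \ls_{w,4}$ from above, or by translating into commutative power series via $\rho$ and exploiting the rationality of the canonical solutions $\xi^{(r)}_{2n+1}$.

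Finally I would assemble the four ranges. In each bidegree the explicit elements built from $\sigma^c_{2n+1}$ and $\tau$ span a subspace of $\ls_{w,d}$ whose dimension meets the upper bound — from Goncharov and the classical depth-$1,2$ descriptions in depths $\leq 3$, and from the surjectivity statement in depth $4$ (odd weight). Linear independence follows from the explicit leading terms recorded in $(\ref{introexplicitforsigma})$ and from the nonvanishing of the cuspidal components of $\tau$. Hence these elements form a basis of $\gr^{\dd}_{\leq 4}\dmr_0$ in odd weight and of $\gr^{\dd}_{\leq 3}\dmr_0$ in even weight, and the filtered statement follows by the depth induction set up at the outset.
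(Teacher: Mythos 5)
Your reduction to the depth-graded spaces $\ls^d_w$, with Zagier's computation in depth $2$ and Goncharov's theorem in depth $3$ as the non-explicit inputs, is exactly the paper's proof of Theorem \ref{thmsigmasaremotivic}. The genuine gap is your treatment of depth $4$ in odd weight, which you flag as ``the hardest point'' and leave to a future direct analysis of the linearised equations. The required fact is already in the paper: the depth-parity theorem (Theorem \ref{Depth-Paritythm}, due to Tsumura) gives $\ls^d_n=0$ whenever $n\equiv d+1 \pmod 2$, hence $\ls^4_w=0$ for $w$ odd. So there is nothing to span in depth $4$: the depth-$4$ component of an odd-weight solution is \emph{uniquely determined} by its components in depths $\leq 3$, and once the depth-$\leq 3$ truncation agrees with a bracket expression in the $\sigma^c_{2n+1}$, the depth-$4$ tails agree automatically --- this is precisely the remark following Theorem \ref{thmsigmasaremotivic}. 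Note also that the source of your displayed bracket map is itself zero by the same parity constraints ($\ls^1$ and $\ls^3$ vanish in even weight, $\ls^2$ vanishes in odd weight), so the surjectivity you propose to prove is logically equivalent to the vanishing of its target; your plan amounts to reproving the depth-parity theorem from scratch, and your suggestion that the even-weight depth-$3$ part of $\tau$ enters the odd-weight depth-$4$ story is a symptom of the same confusion (the components of $\tau$ solve the full, not the linearised, equations, so they do not live in $\ls$ and cannot be Ihara-bracketed with Lie elements in the way you describe).

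Two smaller points. For even weight, the full regularised double shuffle equations are inhomogeneous (product terms on the right-hand side), so their solution set is a torsor under $\mathrm{DMR}_0$ rather than a vector space; ``spanning'' is not the right formulation, and the paper instead writes every solution as $\exp_{\circb}(g)\circb \tau$ with $g$ a combination of brackets of the $\sigma^c_{2n+1}$ of length $\leq 3$. Your depth-graded linearisation would need to be recast in this torsor language to be correct. Finally, your closing claim of linear independence is false: the brackets $\{\sigma^c_{2a+1},\sigma^c_{2b+1}\}$ satisfy the Ihara--Takao period-polynomial relations of \S\ref{sectQuadrel}, so these elements form a generating set, not a basis. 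This does not affect the theorem, which asserts only expressibility, but the last step of your argument as written is wrong.
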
 
 
This theorem can be applied to the semi-numerical algorithm  described in \cite{BrDec}  
 to decompose motivic multiple zeta values into a  chosen basis
  using the motivic coaction. It involved a numerical computation of a regulator  at each step. 
 One application of the elements $\sigma_{2n+1}^c$ and $\tau$ is to remove this transcendental
 step, leading  to an exact and  effective algorithm for proving motivic relations between   multiple zeta values in low depths, and any  weight.
  It replaces   the need to store tables of multiple zeta values  in this range of depths \cite{DataMine}.

A further manifestation of the double shuffle equations occurs in genus 1.
As mentioned above, we encode  elements of $\Der^{\Theta} \, \LL(a,b)$ by rational functions, by   composing the morphism
$ \delta \mapsto \delta(a): \Der^{\Theta}\,  \LL(a,b) \To \LL(a,b) \subset T(a,b)$
with the linear map
\begin{eqnarray}\label{introgrbtoratfunc}
\gr^{r}_b\, T(a,b) &  \To &   \Q(x_1,\ldots, x_r)    \\
a^{i_0} b a^{i_1} b \ldots b a^{i_r} & \mapsto &    { x_1^{i_1} \ldots x_r^{i_r}  \over x_1(x_1-x_2) \ldots (x_{r-1}-x_{r})x_r  } \ . \nonumber
\end{eqnarray} 
In \cite{AA}, we defined a  bigraded Lie algebra $\mathfrak{pls}$ to be the space of solutions to the linearised double shuffle equations with poles  at worst of the above form. We show that:

\begin{prop} The Lie algebra of geometric derivations is contained, via $(\ref{introgrbtoratfunc})$, in the space solutions to the linearised double shuffle equations:
$\ue \subset \mathfrak{pls}$ \end{prop}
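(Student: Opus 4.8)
The plan is to reduce the inclusion to a statement about the generators of $\ue$. Recall that $\ue = \Lie\langle \varepsilon_{2n}^{\vee} : n \geq 0\rangle$ and that, by its construction in \cite{AA}, $\mathfrak{pls}$ is a Lie subalgebra of the space of rational functions equipped with the extended Ihara bracket $\{\ ,\ \}$. Writing $\Psi$ for the encoding $\delta \mapsto \rho(\delta(a))$ obtained from $(\ref{introgrbtoratfunc})$, it therefore suffices to prove two things: (i) that $\Psi$ is a morphism of Lie algebras, carrying the commutator on $\Der^{\Theta}\,\LL(a,b)$ to the extended Ihara bracket; and (ii) that each generator $\Psi(\varepsilon_{2n}^{\vee})$ already lies in $\mathfrak{pls}$. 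Granting both, the image $\Psi(\ue)$ is precisely the Lie subalgebra generated by the $\Psi(\varepsilon_{2n}^{\vee})$, and this is contained in $\mathfrak{pls}$ because the latter is closed under its bracket.

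The check for the generators is a short depth-one computation. Since $\varepsilon_{2n}^{\vee}(a) = \ad(a)^{2n} b$ has $b$-degree one, expanding
\[ \ad(a)^{2n} b = \sum_{k=0}^{2n} (-1)^k \binom{2n}{k}\, a^{2n-k}\, b\, a^{k} \]
and applying $(\ref{introgrbtoratfunc})$ produces, for each $n \geq 0$, a single function of $x_1$ with numerator $(1-x_1)^{2n}$ over the depth-one specialisation of the pole factor; the case $n=0$, where $\varepsilon_0^{\vee}(a) = b$, yields the weight $-1$ function. In depth one the linearised double shuffle equations impose no constraint beyond a parity condition, which the even exponent $2n$ guarantees, so that membership in $\mathfrak{pls}$ reduces to the pole being of the permitted shape, which is manifest. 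These functions are exactly the depth-one parts of the $\xi_{2n+1}$ and of $\xi_{-1}$, in accordance with the correspondence recorded via the Hain morphism.

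The substance of the proof, and the principal obstacle, is the Lie-algebra compatibility (i). For $\delta_1,\delta_2 \in \Der^{\Theta}\,\LL(a,b)$ one has $[\delta_1,\delta_2](a) = \delta_1\bigl(\delta_2(a)\bigr) - \delta_2\bigl(\delta_1(a)\bigr)$, and evaluating the right-hand side requires the values $\delta_i(b)$, which are forced by $\delta_i(\Theta) = \delta_i([a,b]) = 0$ together with homogeneity. The goal is to show that, after passing through $\rho$ and dividing by the denominators of $(\ref{introgrbtoratfunc})$, this reproduces the extended Ihara bracket $\{\ ,\ \}$. I would prove this as the genus-one analogue of the classical identification of the Ihara bracket on $\LL(\x_0,\x_1)$ with the commutator of special derivations: expand $\delta_i$ on a monomial $a^{i_0} b a^{i_1} \cdots b a^{i_r}$ in closed form, track how the constraint $\delta_i(\Theta)=0$ propagates through the interior letters $b$, and match the resulting combinatorics term by term against the defining formula of the bracket in the polar representation. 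The delicate point is that the substitution of $\delta_i(b)$ and the rescaling by the pole factors $x_j - x_{j+1}$ interact, so that the naive polynomial bracket acquires exactly the correction that keeps the output within the rational functions whose poles are of the admissible form; verifying that the constraint $\delta(\Theta)=0$ creates no spurious poles, and forces the image into $\mathfrak{pls}$ rather than merely into the ambient field of rational functions, is where this hypothesis is used essentially.
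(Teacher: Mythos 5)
Your overall strategy---reduce to the generators, show that the encoding $\delta\mapsto\rho(\delta(a))/(\text{pole factor})$ is a morphism of Lie algebras for the extended Ihara bracket, and then invoke closure of $\mathfrak{pls}$ under that bracket---is exactly the paper's proof of proposition \ref{propuedbshf}. However, your depth-one computation of the generator images is incorrect. The encoding is $\delta\mapsto c_r^{-1}\rho^{(r)}\delta(a)$ followed by the reduction $(y_0,y_1,\ldots,y_r)\mapsto(0,x_1,\ldots,x_r)$, so for $\varepsilon^{\vee}_{2n+2}$ one gets $(y_0-y_1)^{2n+2}/(y_0-y_1)^2=(y_0-y_1)^{2n}$, whose reduced representation is $x_1^{2n}$ (and $x_1^{-2}$ for $\varepsilon^{\vee}_{0}$), in agreement with $\xi^{(1)}_{2n+1}=x_1^{2n}$. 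Your expansion effectively sends the prefix $a^{2n-k}$ to $1$ rather than to $0^{2n-k}$ (i.e.\ substitutes $y_0=1$ instead of $y_0=0$) and yields $(1-x_1)^{2n}$; this function has no definite parity in $x_1$, so the parity argument you then invoke for membership in $\mathfrak{pls}$ does not go through as written. With the correct value $x_1^{2n}$ it does.

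The more serious issue is that your step (i), the Lie-algebra compatibility of the encoding, is the entire technical content and you only outline a plan for it. In the paper this is equation $(\ref{betabracketeqn})$ of proposition \ref{propAction2}, and the proof there is not the term-by-term monomial bookkeeping you propose: one introduces the action $f\eact g=f\circb g-f\cdot g$ of $Q'$ on $Q$, observes from the distributivity law $(\ref{Distrib})$ that $f\eact(-)$ is a derivation for the concatenation product, checks agreement with the derivation action on the two generators $a$ and $[a,b]$ (where $\ell([a,b])=1$ is annihilated by $\eact$---this is precisely how the constraint $\delta(\Theta)=0$ is absorbed without creating spurious poles), and then deduces bracket compatibility from the associativity identity $(\ref{Assoc})$ together with faithfulness of the action. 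The interaction between $\delta(b)$ and the pole factors that you correctly identify as the delicate point is resolved by this formalism; as it stands, your proposal names the obstacle but does not overcome it, so the argument is incomplete until that lemma is actually proved.
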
 

Thus the linearised double shuffle equations occur naturally in the elliptic setting. 
It is natural to ask   if $\ue= \mathfrak{pls}$,  and easy to show \cite{AA} that this holds in depths $\leq 3$.
It follows from the previous proposition that  the stuffle equations enable us to detect non-geometric derivations, i.e., elements in  the quotient
$$ (\Der^{\Theta} \,\LL(a,b)) / \ue\ . $$

\subsubsection{Depth 4 generators in the Broadhurst-Kreimer conjecture}
A further application of the canonical elements $\sigma^c_{2n+1}$ is to the Broadhurst-Kreimer conjecture.

It is well-known since Ihara and Takao   \cite{IT} that there exist quadratic relations 
\begin{equation}\label{introquadrel} 
\sum_{i, j} \lambda_{i, j} \{ \sigma_{2i+1} , \sigma_{2j+1} \} \equiv 0 \pmod{ \hbox{terms of degree } \geq 4 \hbox{ in } \x_1} 
\end{equation} 
where $\lambda_{i,j}\in \Q$ are coefficients of period polynomials  $P$ of even, cuspidal $\SL_2(\Z)$-cocycles. 
In \cite{BrDepth}, we reformulated the Broadhurst-Kreimer conjecture, which describes the dimensions of the space of multiple zeta values
graded by the depth, in terms of the spectral sequence induced on $\g$ by the depth filtration $D$. Using the elements $\sigma_{2n+1}^c$
we can compute the first non-trivial differential (conjecturally, the only non-trivial differential) in this spectral sequence. 
A motivic version of the   Broadhurst-Kreimer conjecture can thus be  formulated by saying that  $\gr_{D} \g$ has the following presentation: it is generated by 
the classes $[\sigma_{2n+1}] \in  \gr^1_{D} \g$ for all $n\geq 1$, and  certain elements $\cc(P) \in  \gr^4_{D} \g$, where $\cc$ can be expressed in terms of the canonical 
elements $\sigma^c_{2n+1}$, and it is  subject only  to the  relations $(\ref{introquadrel})$. For a precise statement, see \S\ref{sectCuspelements}. 
This formulation of the Broadhurst-Kreimer conjecture can  be transposed to the elliptic setting  and describes a presentation for a
certain Lie subalgebra of $\ue$.
  \\

\emph{Acknowledgements}.
 This paper was written during a stay at the IAS, partial supported by NSF grant  DMS-1128155
  and ERC grant 257638.  Its origin was  an attempt to interpret the algebraic structures in the notes \cite{AA} geometrically.
    This paper owes a great deal to Richard Hain, who kindly explained his recent papers \cite{HaGPS}, \cite{MEM} to me
  during inumerable discussions, and  made many important  suggestions. Several of the topics touched upon here will be expanded in a future joint work.
Many thanks also to Ding Ma for corrections.

\newpage
\section{Reminders on the projective line minus 3 points}
Background material can be found in \cite{DG}, \cite{Ra}, \cite{BrDepth}.
\subsection{Depth} Let $\LL(\x_0,\x_1)$ denote the free  graded Lie algebra over $\Q$ on two generators $\x_0,\x_1$, where $\x_0$ and $\x_1$ have degree  $1$.
The depth filtration $D^n \LL(\x_0,\x_1)$   is the decreasing filtration such that $D^0= \LL(\x_0,\x_1)$ and 
$$D^1 \LL(\x_0,\x_1) = \ker  ( \LL(\x_0,\x_1) \To \LL(\x_0))$$
where the map on the right sends $\x_1$ to $0$ and $\x_0$ to $\x_0$. It is defined by $D^n = [D^1, D^{n-1}]$ for all $n\geq 2$. It is the decreasing filtration associated to the $D$-degree, for which 
$\x_0$ has $D$-degree $0$ and $\x_1$ has $D$-degree $1$.
Therefore $D^n \LL(\x_0,\x_1)$ consists
of $\Q$-linear combinations of Lie brackets of $\x_0$ and $\x_1$ with at least $n$ $\x_1$'s. 

The universal enveloping algebra of $\LL(\x_0,\x_1)$ is the graded tensor algebra  $T(\x_0,\x_1)$ on $\Q \x_0 \oplus \Q\x_1$. 
The $D$-degree is defined in the same manner on $T(\x_0,\x_1)$ and defines a decreasing filtration  $D^n T(\x_0,\x_1)$ spanned by  words in $\geq n$ $\x_1$'s. We shall embed 
$ \LL(\x_0,\x_1)  \subset T(\x_0,\x_1)$; the embedding is compatible with the filtrations $D$.

\subsection{Ihara bracket} \label{sectIhara}
The de Rham fundamental groupoid \cite{DG}
 $${}_0\Pi_1 = \pi_1^{dR}( \Pro^1 \backslash \{0,1,\infty\}, \tone_0, -\tone_1)$$
 is the de Rham realisation of a mixed Tate motive over $\Z$, and admits an action of the de Rham  motivic Galois group $G^{dR}$. The action on the trivial de Rham path ${}_0 1_1 $
 from the tangential base point $1$ at $0$ to the tangential base point $-1$ at $1$ gives a morphism
 \begin{equation} \label{GdRtoop1}
 g\mapsto g.{}_0 1_1:  G^{dR} \To {}_0\Pi_1 
 \end{equation}
 of schemes. It becomes a morphism of groups if one equips ${}_0 \Pi_1$ with the Ihara group law, which is denoted by $\circ$. If $R$ is a commutative unitary algebra, the set of $R$-points of ${}_0 \Pi_1$ is the set of invertible group-like (with respect to the completed coproduct  
 for which $\x_0,\x_1$ are primitive) formal power series $R\langle \langle \x_0, \x_1\rangle \rangle$
 in two non-commuting variables. The Ihara group law is then given by the formula
 \begin{eqnarray}
\circ:  \opi \times \opi &  \To & \opi  \\
 F \circ G & = &  G(\x_0, F \x_1 F^{-1} ) F \ .  \nonumber
  \end{eqnarray}
 The expression on the right-hand side is also equal to 
 $$F\circ G= F G( F^{-1} \x_0 F, \x_1)\ . $$    Likewise, the de Rham fundamental group  with tangential base point $-1$ at $1$
   $${}_1\Pi_1 = \pi_1^{dR}( \Pro^1 \backslash \{0,1,\infty\}, -\tone_1) $$
 admits an action of $G^{dR}$, which  can be shown to factorise  through the composition of the map $(\ref{GdRtoop1})$ with the  left-action
  \begin{eqnarray} \label{circ1}
\circ_1:  \opi \times \ipi &  \To & \ipi  \\
 F \circ_1 H & = &  H( F^{-1} \x_0 F, \x_1 )  \ .  \nonumber
  \end{eqnarray}
 Now pass to graded Lie algebras. The graded Lie algebras of both $\opi$ and $\ipi$ can be identified with 
 $\LL(\x_0,\x_1)$. Let $\g = \Lie\!^{\gr} \, U^{dR}$, where $U^{dR}$ is the unipotent radical of $G^{dR}= U^{dR} \rtimes \G_m$. Equation  $(\ref{GdRtoop1})$ gives a morphism
 \begin{equation} \label{gfraktoL}
 i:   \g \To(  \LL(\x_0,\x_1), \{, \})
 \end{equation} 
 where $\{\, , \, \}$ is the Ihara bracket, for which we give a formula below.
 Let $\Der^1 \, \LL(\x_0,\x_1)$ denote the Lie subalgebra of  derivations $\delta \in \Der \, \LL(\x_0,\x_1)$ which satisfy $ \delta(\x_1)=0$. 
 The left action $(\ref{circ1})$ is given on the level of graded Lie algebras by 
  \begin{eqnarray}
( \LL(\x_0,\x_1)  , \{ \, , \, \})   &  \To &  \Der^1 \, \LL(\x_0, \x_1)  \\
  \sigma  & \mapsto &    \begin{cases}  \x_0 \mapsto  [\x_0, \sigma]  \\  \x_1 \mapsto 0 \\ \end{cases}\ . \nonumber
\end{eqnarray} 

\begin{thm} \label{thmDIconj} \cite{BrMTZ} The morphism $(\ref{gfraktoL})$ is injective.
\end{thm}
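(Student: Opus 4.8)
This statement is the main theorem of \cite{BrMTZ}, and the plan is to prove it by converting the injectivity of $i$ into a dimension count for motivic multiple zeta values. First I would dualise. Restricting $(\ref{GdRtoop1})$ to $U^{dR}$ gives an orbit homomorphism $g\mapsto g\cdot \ooi \colon U^{dR}\to \opi$, where $\opi$ carries the Ihara group law, and $i$ is precisely its associated morphism of graded Lie algebras. Since both groups are prounipotent over $\Q$, the functor $\mathrm{Lie}$ is faithful and exact, so $i$ is injective if and only if this homomorphism is a closed immersion, equivalently if and only if the dual homomorphism of affine rings
\[ \Or(\opi)\To \Or(U^{dR}) \]
is surjective. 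Now $\Or(\opi)$ is the shuffle algebra on words in $\x_0,\x_1$, and the pullback of the function ``coefficient of $w$'' along the orbit map is exactly the de Rham motivic multiple zeta value attached to $w$. Hence the image of the dual map is the subalgebra generated by motivic multiple zeta values, and injectivity of $i$ is equivalent to the assertion that these generate all of $\Or(U^{dR})$. By $\G_m$-equivariance (the weight grading) and the Tannakian period dictionary, this is in turn equivalent to the equality $\Ho=\Or(G^{dR})$ of the motivic multiple zeta value algebra with the full de Rham Hopf algebra of $\MT(\Z)$, i.e. to $\dim_\Q \gr^W_n\Ho=d_n$, where $d_n$ are the Deligne--Goncharov dimensions with $\sum_n d_n t^n=\frac{1}{1-t^2-t^3}$.

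One inequality is then automatic. Because $\MT(\Z)$ has $\mathrm{Ext}^1(\Q(0),\Q(n))$ of dimension $1$ for odd $n\geq 3$ and $0$ otherwise, with $\mathrm{Ext}^2=0$, the Lie algebra $\g$ is free on one generator in each degree $-3,-5,-7,\ldots$, and the structure theory of $\MT(\Z)$ gives $\dim_\Q\gr^W_n\Or(G^{dR})=d_n$. Since $\Ho$ is a graded subalgebra of $\Or(G^{dR})$, its weight-$n$ part has dimension at most $d_n$, and the whole problem reduces to the matching lower bound: producing $d_n$ linearly independent motivic multiple zeta values in each weight.

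The lower bound is the heart of the matter, and I would attack it with an explicit family. The motivic multiple zeta values $\zetam(n_1,\ldots,n_k)$ whose arguments all lie in $\{2,3\}$ (Hoffman's family) number exactly $d_n$ in weight $n$, so it suffices to prove that they are linearly independent. For this I would use the infinitesimal motivic coaction, namely the operators
\[ D_{2r+1}\colon \Ho \To \Lo_{2r+1}\otimes \Ho , \]
where $\Lo_{2r+1}$ is one-dimensional, spanned by the class of $\zetam(2r+1)$, together with Goncharov's combinatorial formula for $D_{2r+1}$ on motivic multiple zeta values, which is now a bona fide motivic identity. Filtering the Hoffman family by its level (the number of $3$'s) and arranging the induction on weight so that $D_{2r+1}$ strictly lowers the level, the transition matrix between the family and its total coaction becomes block unitriangular, and linear independence follows once the diagonal blocks are shown to be invertible.

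The main obstacle is precisely this last, arithmetic step. Invertibility of the diagonal blocks rests on Zagier's closed evaluation of the special values $\zeta(2,\ldots,2,3,2,\ldots,2)$, whose coefficients are binomial numbers of the shape $\binom{2a}{2b+1}$ and must be shown not to vanish modulo the subspace of products and of powers of $\zetam(2)$; this is the one place where genuine input beyond formal Tannakian and Hopf-algebraic manipulation is required. Granting it, the Hoffman family is a basis, so $\dim_\Q\gr^W_n\Ho=d_n$ in every weight, the inclusion of the motivic multiple zeta value algebra into $\Or(U^{dR})$ is an equality, the dual orbit map is surjective, and therefore $i$ is injective. I expect the dualisation and the upper bound to be entirely formal, with essentially all the difficulty concentrated in establishing the motivic coaction formula and the Zagier nonvanishing that drives the inductive step.
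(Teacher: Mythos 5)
The paper does not prove this theorem itself; it cites it from \cite{BrMTZ}, and your sketch is an accurate reconstruction of exactly the argument given there: dualising to the surjectivity of the map onto the motivic multiple zeta value algebra, the formal upper bound $\dim \gr^W_n \Ho \leq d_n$ from the structure of $\MT(\Z)$, and the matching lower bound via linear independence of the Hoffman elements $\zetam(n_1,\ldots,n_k)$ with $n_i \in \{2,3\}$, proved by induction on the level using the operators $D_{2r+1}$ and the $2$-adic non-vanishing coming from Zagier's evaluation of $\zeta(2,\ldots,2,3,2,\ldots,2)$. Your identification of where the genuine arithmetic input lies is also correct, so this matches the cited proof in both structure and substance.
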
 
Because of this theorem, we can identify $\g$ with its image in $\LL(\x_0,\x_1)$ via $i$. 
The graded Lie algebra $\g$ is freely generated by elements $\sigma_{2n+1}$ in degree $-2n-1$, for all $n\geq 1$. Their images  in 
$\LL(\x_0,\x_1)$ are the zeta ements
\begin{equation} \label{DrinfeldElements}
i(\sigma_{2n+1}) = \mathrm{ad} (\x_0)^{2n} \x_1 + \hbox{ terms of depth } \geq 2 \ . 
\end{equation} 
They are not \emph{a priori} canonical for $n\geq 5$. However, the Hoffman-Lyndon basis for motivic multiple zeta values allows one to define 
canonical choices  $\sigma^{HL}_{2n+1}$ \ for the $\sigma_{2n+1}$ \cite{BrICM}. Very little is known about these elements,
except for the  coefficients of $(\x_0\x_1)^a \x_0 (\x_0\x_1)^b$ in $\sigma_{2a+2b+1}$ (which are independent of the choice of $\sigma_{2n+1}$),
which follows from \cite{BrMTZ} and \cite{Za232}.

\subsection{Linearized Ihara action and depth} \label{sectLinIhara}
In \cite{BrDepth}, we considered the following linearised version of the Ihara action. For any word 
$a \in  \{\x_0,\x_1\}^{\times}$,  let 
$$(a_1\ldots a_n)^* = (-1)^n a_n \ldots a_1\ .$$
\begin{defn} Define a $\Q$-bilinear map 
$$ \circb  : T(\x_0,\x_1)  \otimes_{\Q} T(\x_0,\x_1) \rightarrow  T(\x_0,\x_1) \ $$
inductively as follows. For   any words $a, w$ in $\x_0,\x_1$, and for any integer  $n\geq 0$,   let 
\begin{equation}\label{circbdef}
 a \circb  (\x_0^{n} \x_1 w) =        \x_0^n  a \x_1  w  +   \x_0^n \x_1 a^* w +\x_0^n \x_1 (a \circb w)   \end{equation}
with the initial condition  $a \circb \x_0^n =  \x_0^n \, a $.
 \end{defn} 
The antisymmetrization of the  map $\circb$ restricts to the Ihara bracket on $\LL(\x_0,\x_1)$:
\begin{eqnarray}
\{f, g\} = f\circb g -g \circb f \qquad \hbox{ for all } f, g \in \LL(\x_0,\x_1)  \ .
\end{eqnarray}
 It follows from this formula that the Ihara bracket is homogeneous for the $D$-degree (the fact that 
it respects the depth filtration follows from the geometric interpretation of the depth filtration using the embedding 
$\Pro^1\backslash \{0,1,\infty\} \subset \Pro^1 \backslash \{0,\infty\}$ \cite{DG}). 
If we embed $\g \hookrightarrow \LL(\x_0,\x_1)$ via $(\ref{gfraktoL})$ we can  define the depth filtration $D \g$ on $\g$ to be the decreasing 
filtration induced by the depth filtration on $\LL(\x_0,\x_1)$.

For later use, remark that if    $A(a,b,c) = a\circb ( b\circb c) - (a\circb b) \circb c$, then  
\begin{equation} \label{Assoc}
A(a,b,c)= A(b,a,c) 
\end{equation} 
for any $a,b,c \in T(\x_0,\x_1)$, which follows from the definitions, and implies that the linearised Ihara bracket satisfies the Jacobi identity. 
\subsection{Double shuffle equations} 
 The double shuffle equations are a family of equations satisfied by  multiple zeta values which are well-adapted to the depth filtration.
 In his thesis \cite{Ra}, Racinet  defined a subspace
  $$\dmr_0 \subset \LL(\x_0,\x_1)\ ,$$ called the regularised double shuffle Lie algebra, which encodes these relations in terms
  of two Hopf algebra structures.  The Lie algebra version of Racinet's theorem is:

\begin{thm} \label{thmRacinet} \cite{Ra}  The space $\dmr_0$ is closed under the  Ihara bracket $\{ \, , \}$. \end{thm}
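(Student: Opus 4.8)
The plan is to use Racinet's description of $\dmr_0$ as the set of Lie polynomials satisfying one further, \emph{stuffle}, constraint, and to reduce the whole statement to the preservation of that constraint. Recall that $T(\x_0,\x_1)$ (dually, the space of non-commutative series) carries two coproducts: the deconcatenation coproduct $\Delta$, dual to the shuffle product, whose primitive elements are exactly the Lie polynomials $\LL(\x_0,\x_1)$; and the coproduct $\Delta_*$ dual to the \emph{stuffle} (quasi-shuffle) product on the subalgebra $\mathfrak{h}^1 = \Q \oplus T(\x_0,\x_1)\x_1$ of words ending in $\x_1$, built from the generators $y_n = \x_0^{n-1}\x_1$. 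By definition $\dmr_0$ consists of those $f \in \LL(\x_0,\x_1)$ (i.e.\ $f$ primitive for $\Delta$) whose stuffle-regularisation $\mathrm{reg}_*(f)$ is primitive for $\Delta_*$. Thus one condition is ambient membership in $\LL(\x_0,\x_1)$ and the other is a genuine relation, and I would treat them separately.

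The shuffle half is immediate. By the relation $(\ref{Assoc})$, the antisymmetrisation $\{f,g\} = f \circb g - g\circb f$ is a Lie bracket on $\LL(\x_0,\x_1)$, so $\{f,g\} \in \LL(\x_0,\x_1)$ whenever $f,g \in \LL(\x_0,\x_1)$; in particular the bracket of two elements of $\dmr_0$ automatically lies in $\LL(\x_0,\x_1)$ and is primitive for $\Delta$. Hence the entire content of the theorem is that the stuffle condition is preserved under $\{\,,\,\}$.

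For the stuffle half I would pass to the prounipotent group. Writing $\mathsf{DMR}_0 \subset \opi$ for the group of series group-like for both structures (after regularisation), $\dmr_0$ is its tangent Lie algebra at the identity; since the Ihara group law $\circ$ induces the Ihara bracket on Lie algebras (\S\ref{sectIhara}), it suffices to prove $F \circ G \in \mathsf{DMR}_0$ for $F,G \in \mathsf{DMR}_0$ and then differentiate the group commutator. The shuffle-group-likeness of $F \circ G = G(\x_0, F\x_1 F^{-1})F$ is automatic, since by \S\ref{sectIhara} the $R$-points of $\opi$ are precisely the shuffle-group-like series and $\circ$ is a group law on them. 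The difficulty, which is the technical heart of Racinet's thesis, is the stuffle-group-likeness of $F\circ G$: the conjugation $\x_1 \mapsto F\x_1 F^{-1}$ completely scrambles the generators $y_n$ that define $\Delta_*$, so no Hopf-automorphism shortcut is available. I expect this to be the main obstacle.

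To overcome it I would follow Racinet's \emph{dévissage}: expand $\Delta_*(F\circ G)$ and organise its components by the weight and depth filtrations, then show, by induction, that every non-primitive contribution cancels. The two ingredients are the already-established shuffle-group-likeness together with the precise comparison between the shuffle- and stuffle-regularisations, which in the formal setting differ by a universal group-like correction series; tracking this correction through the Ihara action is what forces the cancellations. Granting this, $\mathsf{DMR}_0$ is a subgroup of $(\opi, \circ)$, so its tangent Lie algebra $\dmr_0$ is closed under the induced Ihara bracket, which is exactly the assertion.
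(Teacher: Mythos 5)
The paper offers no proof of this statement: it is quoted directly from Racinet's thesis \cite{Ra}, so there is nothing internal to compare your argument against. Judged on its own terms, your proposal correctly identifies the architecture of Racinet's argument: membership in $\LL(\x_0,\x_1)$ (equivalently, shuffle-primitivity) is preserved for formal reasons, so the whole theorem is the preservation of the stuffle condition, and one may work at the group level with $F \circ G = G(\x_0, F\x_1F^{-1})F$ and then differentiate. Up to that point everything you say is right, modulo one small slip: equation $(\ref{Assoc})$ gives the Jacobi identity for $\{\,,\,\}$, not the closure of $\LL(\x_0,\x_1)$ under it; closure follows instead from identifying $\{f,g\}$ with the value on $\x_0$ of the commutator $[\delta_f,\delta_g]$ of the derivations of \S\ref{sectIhara}, which preserve the Lie subalgebra.

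But the step you yourself flag as ``the technical heart'' is then not carried out. ``Expand $\Delta_*(F\circ G)$, organise by weight and depth, and show by induction that every non-primitive contribution cancels'' describes a hoped-for outcome, not an argument: nothing in the proposal explains \emph{why} the conjugation $\x_1 \mapsto F\x_1F^{-1}$, which scrambles the generators $y_n = \x_0^{n-1}\x_1$ underlying $\Delta_*$, should nevertheless produce a $\Delta_*$-group-like series. The two ingredients you name (shuffle group-likeness of $F$, and the universal comparison between the two regularisations) are necessary but nowhere near sufficient; Racinet's actual proof turns on a further, specific mechanism --- a controlled computation of the projection onto words ending in $\x_1$ of the twisted series, and an identity satisfied by the corrected series $F_*$ that forces the non-primitive terms to cancel --- and none of that mechanism is identified here. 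As written, the proposal reduces the theorem to its own hardest case and then says ``granting this''. So: correct skeleton, accurate identification of where the difficulty lives, but the body of the proof is missing.
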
 

Since the  regularised double shuffle equations hold for actual multiple zeta values, and are stable under the Ihara bracket, it follows that they are motivic. Combined with theorem \ref{thmDIconj} we deduce that there is an inclusion of Lie algebras
$$ \g \subset \dmr_0 \subset \LL(\x_0,\x_1)\ . $$
Therefore we can study elements $\sigma_{2n+1}$ by attempting to solve the defining equations of $\dmr_0$ in low depths.
This will  be achieved below using  the language of commutative power series (\S\ref{sectCommSeries}).

\subsection{Depth-graded motivic Lie algebra}
The depth filtration induces a decreasing filtration $D^{\bullet}$ on $G^{dR}$ and hence $\g$ via the maps $(\ref{GdRtoop1})$ and $(\ref{gfraktoL})$. Let 
\begin{equation} \dg = \gr^{\bullet}_D \g
\end{equation} denote the associated graded  Lie algebra. It is bigraded for weight and depth. The component of $\dg$
of depth $d$ and weight $n$ will be denoted by $\dg^d_n$. Let $\dg^d = \bigoplus_n \dg^d_{n}$, 
  denote the (infinite-dimensional) component in depth $d$,  and let $\dg_n = \bigoplus_d \dg^d_{n}$ denote the (finite-dimensional) component in weight $n$. 

The linearised double shuffle equations are a family of equations which were introduced in \cite{GKZ} and further studied in \cite{BrDepth}. It follows from a variant of Racinet's theorem that their set of solutions, denoted by $\ls\subset \gr^{\bullet}_D \LL(\x_0,\x_1)$, is a bigraded Lie algebra for the Ihara bracket $\{ \, , \, \}$,
and  a corollary of  theorem $\ref{thmDIconj}$ and Racinet's theorem  is 
\begin{thm} (\cite{BrDepth}, \S5)  $\dg \subset \ls.$
\end{thm}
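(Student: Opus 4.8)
The plan is to obtain the inclusion by applying the functor $\gr^\bullet_D$ to the already-established containment $\g \subset \dmr_0$; the only substantive point is that the depth-associated-graded of Racinet's double shuffle equations is precisely the system of \emph{linearised} double shuffle equations cutting out $\ls$.

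First I would dispose of the formal part. By Theorem \ref{thmDIconj} we may regard $\g$ as a Lie subalgebra of $\LL(\x_0,\x_1)$, and by Theorem \ref{thmRacinet} together with the motivic nature of the double shuffle relations we have $\g \subset \dmr_0 \subset \LL(\x_0,\x_1)$, all three carrying the induced depth filtration $D^\bullet$. For nested subspaces equipped with the induced filtration the associated graded spaces embed compatibly into $\gr^\bullet_D \LL(\x_0,\x_1)$: the natural map $\gr^\bullet_D A \to \gr^\bullet_D \LL(\x_0,\x_1)$ is injective for any $A$ with the induced filtration, since an element of $A \cap D^n$ that lies in $D^{n+1}$ already lies in $A \cap D^{n+1}$. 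Inside $\gr^\bullet_D \LL(\x_0,\x_1)$ we therefore have
\[ \dg \;=\; \gr^\bullet_D \g \;\subseteq\; \gr^\bullet_D \dmr_0 \ , \]
and it suffices to prove $\gr^\bullet_D \dmr_0 \subseteq \ls$.

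For the main step I would track how the two families of relations defining $\dmr_0$ interact with the depth. Recall from \cite{Ra} that $f \in \dmr_0$ iff $f$ is a Lie element for the shuffle product $\sha$ (the shuffle condition) and its stuffle-regularisation $f_{*}$ is primitive for the coproduct dual to the quasi-shuffle product $*$ on the alphabet $y_n = \x_0^{n-1}\x_1$ (the stuffle condition). The shuffle product is homogeneous for the $D$-degree, so the shuffle condition is already depth-graded. The quasi-shuffle product satisfies $y_a * y_b = y_a y_b + y_b y_a + y_{a+b}$ and, more generally, agrees with the $y$-shuffle up to strictly depth-lowering contraction terms; moreover the regularisation $f \mapsto f_{*}$, which corrects for $\x_0$-prefixes and $\x_1$-suffixes, modifies $f$ only in strictly lower depth. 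Hence the leading depth component $\bar f$ of $f$, read in the $y$-alphabet, is primitive for the coproduct dual to the $y$-shuffle, i.e. is a Lie element in the letters $y_n$. Translating this, together with the unchanged shuffle condition, through the polynomial encoding $\rho$ of \S\ref{sectCommSeries} reproduces exactly the linearised double shuffle equations of \cite{GKZ, BrDepth} defining $\ls$; thus $\bar f \in \ls$, whence $\gr^\bullet_D \dmr_0 \subseteq \ls$.

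The step I expect to be the main obstacle is the regularisation bookkeeping: one must verify rigorously that the terms by which $f_{*}$ differs from the naive $y$-alphabet projection of $f$, together with all the depth-lowering quasi-shuffle contractions, are genuinely of strictly smaller depth and hence invisible in $\gr^\bullet_D$, and that the surviving top-depth conditions match the linearised equations \emph{on the nose}, acquiring no spurious constraints and losing none. Granting this identification, the formal first step yields the desired chain $\dg \subseteq \gr^\bullet_D \dmr_0 \subseteq \ls$.
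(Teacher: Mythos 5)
Your proof is correct and takes essentially the same route as the paper, which presents the statement as a corollary of Theorem \ref{thmDIconj} and Racinet's theorem and defers the key linearisation step ($\gr^{\bullet}_D\,\dmr_0 \subseteq \ls$) to \cite{BrDepth}, \S5 — exactly the passage to leading depth components that you carry out. The one slip is in your regularisation bookkeeping: the correction terms in $f_{*}$ are supported on powers of $y_1=\x_1$ and therefore sit in strictly \emph{higher} depth than the leading component (not lower), which is precisely why they are invisible in the associated graded.
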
 
The following theorem was first proved by Tsumura. See \cite{BrDepth}, \S6.4 for a short proof. 
\begin{thm} \label{Depth-Paritythm} (Depth-Parity theorem for double shuffle equations).
We have 
$$\ls^d_n = 0 \quad \hbox{ if } \quad n \equiv d+1 \pmod{2}\ .$$
\end{thm}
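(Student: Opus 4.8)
The plan is to pass to the commutative polynomial model attached to $\rho$, in its homogeneous form: record a word $\x_0^{i_0}\x_1 \x_0^{i_1}\cdots \x_1 \x_0^{i_d}$ of weight $n$ and depth $d$ by the monomial $y_0^{i_0}y_1^{i_1}\cdots y_d^{i_d}$ in $d+1$ variables, so that an element of $\ls^d_n$ becomes a homogeneous polynomial $P(y_0,\ldots,y_d)$ of degree $n-d$. The theorem is then equivalent to the statement that no such $P$ can be nonzero when $n-d$ is odd. I would obtain this by producing a reflection-type involution which acts on $P$ by a sign that the shuffle equations tie to the \emph{weight} parity and the stuffle equations tie to the \emph{depth} parity; the two can only be consistent when $n\equiv d\pmod 2$.

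First I would record the shuffle symmetry. The antipode of the shuffle Hopf algebra satisfies $S_{\mathrm{sh}}(w)=(-1)^{|w|}\,\widetilde w$, where $\widetilde w$ is word reversal, and in the $y$-coordinates reversal is exactly the reflection $R\colon y_i\mapsto y_{d-i}$. Since any element of $\gr_D\LL(\x_0,\x_1)$ is a Lie element, hence primitive for the shuffle coproduct, one has $S_{\mathrm{sh}}(P)=-P$, which reads
\[
R(P)=(-1)^{\,n-1}P.
\]
This relation holds for \emph{all} Lie elements and carries the weight parity; it is the passive half of the argument.

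Next I would extract the corresponding relation from the stuffle side, which is where the defining equations of $\ls$ genuinely enter. Writing words in the block letters $z_s=\x_0^{\,s-1}\x_1$, the quasi-shuffle antipode $S_{*}$ sends $z_{s_1}\cdots z_{s_d}$ to $(-1)^d$ times the block reversal $z_{s_d}\cdots z_{s_1}$, modulo terms of strictly lower depth arising from contractions; stuffle primitivity forces $S_{*}(P)=-P$, a genuine (non-automatic) constraint whose reversal part carries the factor $(-1)^d$. After transporting this identity to the space on which $R$ acts, one obtains a second reflection relation carrying the depth sign, and equating it with the shuffle relation gives $(-1)^{n-1}=(-1)^{d-1}$, whence $P=0$ whenever $n\equiv d+1\pmod 2$. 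The total negation $N\colon y_i\mapsto -y_i$, which multiplies a degree-$(n-d)$ homogeneous polynomial by $(-1)^{\,n-d}$, is precisely the operator measuring the discrepancy between the two reflections, so the bookkeeping reduces to comparing $N$ against the identity.

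The hard part is the honest reconciliation of the two antipodes, which live on two different regularizations: $S_{\mathrm{sh}}$ is phrased on $\LL(\x_0,\x_1)$ (words ending in $\x_0$), whereas $S_{*}$ is phrased on the block letters $z_s$ (words ending in $\x_1$), and full word reversal and block reversal are genuinely distinct operators. The crux is therefore to transport both reflections to a common space through the shuffle-regularization of the stuffle relation and to check that they agree up to the single sign $(-1)^{\,n-d}$; the closure statement already quoted is what guarantees that $\ls$ is stable under both operations. That the correction terms cannot be ignored is visible already in depth $1$, where the block reversal is the identity and the leading stuffle relation is vacuous, so the entire condition $\ls^1_n=0$ for $n$ even sits in the regularization: the functional equations satisfied by $\rho$ must be used, not merely the depth-graded leading terms. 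Once this identification and its sign are pinned down, the parity statement follows formally from the homogeneity of $P$.
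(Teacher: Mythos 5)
The paper itself does not prove this statement; it cites Tsumura and \cite{BrDepth}, \S 6.4, where the short proof rests on the full signed dihedral symmetry of the homogeneous polynomial representation, extracted from the multi-term linearised relations. Your plan heads in that direction but the decisive step is wrong. The two relations you extract --- $R(P)=(-1)^{n-1}P$ for full word reversal (shuffle antipode) and block reversal acting by $(-1)^{d-1}$ (stuffle antipode) --- are eigenvalue equations for two \emph{different} operators, so they cannot be ``equated'' to give $(-1)^{n-1}=(-1)^{d-1}$. Their honest combination is their composite: in the homogeneous coordinates $(y_0,\dots,y_d)$ word reversal is $y_i\mapsto y_{d-i}$, block reversal is $y_0\mapsto y_0$, $y_i\mapsto y_{d+1-i}$ for $i\geq 1$, and the composite is the cyclic rotation $(y_0,\dots,y_d)\mapsto (y_1,\dots,y_d,y_0)$ of order $d+1$ --- \emph{not} the negation $N$, which is central and is not the ``discrepancy'' between the two reflections. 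What you actually obtain is that the rotation acts by $(-1)^{n+d}$, and iterating $d+1$ times forces $P=0$ only when $d+1$ is odd. So your argument is correct for $d$ even but proves nothing for $d$ odd.

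The gap is not merely expository. Take $d=3$, $n=4$, and $f=y_0-y_1+y_2-y_3$: it is translation invariant, homogeneous of degree $n-d=1$, satisfies $f(y_3,y_2,y_1,y_0)=-f=(-1)^{n-1}f$, $f(y_0,y_3,y_2,y_1)=f=(-1)^{d-1}f$ and $f(-y)=(-1)^{n-d}f$, yet $f\neq 0$ while $n\equiv d+1\pmod 2$. Hence no argument using only the two antipode relations together with homogeneity and translation invariance can prove the theorem; the full three-term relations $(\ref{shuffle})$ and $(\ref{linstuffle})$ must enter (indeed this $f$ fails the depth-three linearised stuffle relation: the cyclic sum equals $-x_1-x_2-x_3\neq 0$). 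That is precisely what the dihedral-symmetry argument of \cite{BrDepth}, \S 6.3--6.4 supplies: the sign of the rotation is pinned down independently from the multi-term relations and then compared with the homogeneity parity. A small additional point: in the linearised setting the evenness of $f^{(1)}$ is one of the \emph{defining} equations of $\ls$, so the depth-one case does not ``sit in the regularization'' as you suggest; it is true by fiat, and the content of the theorem begins in depth $2$.
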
 
Combining the previous two theorems gives the
\begin{cor}  (Depth-Parity theorem). \label{corDepthParity} If $ n \equiv d+1 \pmod{2}$, then $\dg^d_n =0 $.
\end{cor}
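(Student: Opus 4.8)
The plan is to obtain Corollary \ref{corDepthParity} as a purely formal consequence of the two results immediately preceding it, exactly as the sentence ``Combining the previous two theorems gives the'' announces. First I would invoke the inclusion $\dg \subset \ls$ proved above (\cite{BrDepth}, \S5). The crucial point to record is that this is an inclusion of \emph{bigraded} Lie algebras: both $\dg = \gr^{\bullet}_D \g$ and $\ls \subset \gr^{\bullet}_D \LL(\x_0,\x_1)$ carry the weight and depth gradings, and the inclusion is compatible with each, since $\ls$ is defined as a sub of $\gr^{\bullet}_D \LL(\x_0,\x_1)$ and $\dg$ sits inside it via the depth-graded refinement of $i$. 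Consequently the inclusion holds componentwise, $\dg^d_n \subseteq \ls^d_n$, for every pair $(d,n)$ of depth and weight.

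Next I would apply Theorem \ref{Depth-Paritythm}, the Depth-Parity theorem for the double shuffle equations, which asserts that $\ls^d_n = 0$ whenever $n \equiv d+1 \pmod{2}$. Feeding this into the componentwise inclusion of the previous step gives $\dg^d_n \subseteq \ls^d_n = 0$ in precisely the same parity range $n \equiv d+1 \pmod{2}$, hence $\dg^d_n = 0$, which is the assertion of the corollary.

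Since the deduction itself is a one-line specialisation of an inclusion to a single bidegree, there is no genuine obstacle at this stage; the substantive content has already been spent upstream, in Theorem \ref{Depth-Paritythm} (Tsumura's parity vanishing for $\ls$) and in the inclusion $\dg \subset \ls$. The only point that genuinely needs checking, and which I would make explicit, is that these two inputs are compatibly bigraded, so that the vanishing transfers bidegree by bidegree rather than merely at the level of total weight. Once that compatibility is noted, no further computation is required.
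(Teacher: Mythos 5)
Your proposal is correct and is exactly the paper's argument: the corollary is stated as the immediate combination of the bigraded inclusion $\dg \subset \ls$ with the vanishing $\ls^d_n = 0$ for $n \equiv d+1 \pmod 2$, specialised componentwise. Your extra remark about checking compatibility of the bigradings is a reasonable point of care but introduces nothing beyond what the paper already takes for granted.
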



\section{The fundamental Lie algebra of the  first-order Tate   curve} \label{sectElliptic}
The material in this section is an abridged account of results extracted from the papers \cite{HaGPS}, \cite{HaDe} and \cite{MEM}. 

\subsection{Background}  Let $E_{\partial/\partial q}^{\times}$ denote the  first order Tate curve, which is the fiber of the universal elliptic curve over $\M_{1,1}$ with respect to the tangential
base point $\partial/\partial q$. Its de Rham fundamental group 
\begin{equation} \label{PPdefn} 
\PP =  \pi_1^{dR} ( E_{\partial/\partial q}^{\times}, \tone_1)
\end{equation}
where $\tone_1$ is the tangent vector of length $1$ with respect to the canonical holomorphic coordinate $w$ on $E_{\partial/\partial q}^{\times}$,
is the de Rham realisation of a pro-object in the category of mixed Tate motives over $\Z$. Since its mixed Hodge structure is the limiting mixed Hodge structure of a variation, its relative monodromy weight filtration is denoted by $M$, and it comes equipped with  a geometric weight filtration $W$.  This data defines a universal mixed elliptic motive according to Hain and Matsumoto \cite{MEM}.  The associated $M, W$ bigraded Lie algebra is   the free Lie algebra $\LL(H_{dR})$ where 
$$H_{dR}=(H_{dR}^1( E_{\partial/\partial q}^{\times};\Q))^{\vee} = \Q a \oplus \Q b   \quad ( = \Q(1) \oplus \Q(0))$$
 which has   two canonical  de Rham generators $a$ and $b$.  It will be denoted by $\LL(a,b)$. The generators $a$ and $b$ have $(M,W)$ bidegrees $(-2, -1)$ and $(0,-1)$, respectively. The geometric weight filtration $W$ coincides with the lower central series filtration on $\PP$.

Since $\PP$ is the de Rham realisation of a mixed Tate motive over $\Z$, it admits an action of the de Rham motivic Galois group $G^{dR}$ of $\MT(\Z)$ 
 by the Tannakian formalism. Passing to Lie algebras gives  a morphism
\begin{equation}  \label{i1def}
i_1: \g \To \Der^{\Theta} \, \LL(a,b) 
\end{equation} 
where $\Theta = [a,b]$ and 
$$\Der^{\Theta} \,\LL(a,b) = \{ \delta \in \Der\, \LL(a,b): \delta(\Theta)= 0\}\ .$$
This is because $\Theta=[a,b]$ corresponds to the de Rham path which winds once around the puncture in $E_{\partial/\partial q}^{\times}$, and generates a copy of $\Q(1)$, which is fixed by $U^{dR}$.

\subsection{Derivations} 
We only need to consider  the subspace  $B^0 \, \Der^{\Theta} \,\LL(a,b) $ of derivations $\delta $  (see \S\ref{sectBdegree} for the definition of the $B$-filtration) such that 
$$\delta(b) \in  B^1 \LL(a,b):= \ker (\LL(a,b) \rightarrow  \LL(a))$$
where the map on the right sends $b$ to zero (it is the composition of the natural map $\LL(H_{dR}) \rightarrow \LL(H_{dR})^{ab}= H_{dR}$ followed by the projection $H_{dR} \rightarrow H_{dR}/F^0 H_{dR} =\Q a$). Such a derivation is uniquely determined by its value  $\delta(a)$
since $[\delta(a), b]+ [a,\delta(b)]=0$, and the commutator of $a$ is  $a\,\Q$. Thus $\delta \mapsto \delta(a)$ gives an embedding $B^0 \, \Der^{\Theta} \,\LL(a,b)  \rightarrow \LL(a,b)$. 

 For each $n\geq -1$, one shows that there exist elements 
$$\varepsilon_{2n+2}^{\vee} \in B^0   \, \Der^{\Theta} \,\LL(a,b)\subset \Der^{\Theta}\, \LL(a,b)$$
which are uniquely determined by the property
\begin{equation}
\varepsilon^{\vee}_{2n+2} (a)  =    \ad(a)^{2n+2} (b) \ .\nonumber 
 \end{equation} 
 The  elements $\varepsilon_{2n+2}^{\vee}$ were first defined by Nakamura in 1999 in the profinite setting.
 The element $\varepsilon^{\vee}_2$ is central in $ \Der^{\Theta}\, \LL(a,b)$ and plays no role here.    Let us define 
 \begin{equation}  \label{uedefn}
 \ue \subset \Der^{\Theta} \, \LL(a,b) 
 \end{equation} 
 to be the Lie subalgebra spanned by the $\varepsilon^{\vee}_{2n+2}$, for $n \geq -1$. 
 One shows  that the relative Mal\v{c}ev completion of  $\pi_1(\M_{1,1}, {\partial / \partial q})=\mathrm{SL}_2(\Z)$ (relative to $\SL_2(\Z) \rightarrow \SL_2(\Q)$) acts on $\LL(a,b)$ via $\ue$ \cite{HaGPS}.    Quadratic relations between the  elements $\varepsilon_{2n}^{\vee}$ predicted  by Hain and Matsumoto were studied by Pollack in his thesis \cite{Po}. 
  
 We define heretical normalisations  of these derivations as follows. Let
 \begin{equation} \label{hereticalepsilon}
\underline{\varepsilon}^{\vee}_{0} = {1\over 12 }  \varepsilon^{\vee}_{0} \qquad \hbox{ and } \qquad   \underline{\varepsilon}^{\vee}_{2n+2} = {B_{2n} \over (2n)!} \varepsilon^{\vee}_{2n+2} 
 \qquad \hbox{ for } n\geq 1 \end{equation} 
where  $B_{k}$ denotes the $k$th Bernoulli number.

\subsection{The Hain morphism} \label{sectHainmorphism} 
There is a morphism of fundamental groups \cite{HaDe}, \S16-18:
\begin{equation} \label{P1toEtimes} 
\pi_1(\Pro^1 \backslash \{0,1,\infty\}, \tone_1) \To \pi_1(E^{\times}_{\partial/\partial q}, {\partial / \partial w})\ .
\end{equation}
Using the work of Levin and Racinet, Hain has computed this map in the de Rham realisation  \cite{HaDe}, (18.1). On   de Rham   Lie algebras it is the continuous morphism
\begin{eqnarray} \label{phidefn}
\phi: \LL  (\x_0,\x_1 )^{\wedge} & \To &  \LL (a, b)^{\wedge} \\
\x_0 & \mapsto &  {\ad(b) \over e^{\ad(b)} -1}  a  =   a -{1 \over 2} [b,a] + {1 \over 12} [b,[b,a]] + \ldots\nonumber \\
\x_1 & \mapsto & [a,b] \nonumber
\end{eqnarray} 
where $\wedge$ denotes completion with respect to the lower central series.

 Let $ \Der^1 \LL(\x_0,\x_1)^{\wedge}$, $\Der^{\Theta} \, \LL(a,b)^{\wedge}$ denote the 
 continuous derivations of completed Lie algebras which send, respectively,  $\x_1$ to $0$ or $[a,b]$ to $0$.
 We say that an element $\sigma \in  \Der^1 \LL(\x_0,\x_1)^{\wedge}$ lifts to a derivation $\widetilde{\sigma} \in \Der^{\Theta} \, \LL(a,b)^{\wedge}$ 
 (and conversely, $\widetilde{\sigma}$ descends to the derivation $\sigma$) if 
\begin{equation} 
\widetilde{\sigma}\circ    \phi  = \phi \circ  \sigma \  . \
\end{equation} 
which  is equivalent to the equation $\widetilde{\sigma}    \phi(\x_0)   = \phi   \sigma(\x_0)$.  
An element of $\Der^{\Theta} \, \LL(a,b)^{\wedge}$ descends  to an element of $   \Der^1 \LL(\x_0,\x_1)^{\wedge}$  if and only if it preserves the subspace  $\phi(\LL(\x_0,\x_1)^{\wedge})$ in $\LL(a,b)^{\wedge}$.
Since  $(\ref{phidefn})$ is injective, $\sigma$ and $\widetilde{\sigma}$ determine each other uniquely. Furthermore,  lifting  derivations preserves composition:  if 
$\sigma_1, \sigma_2 \in \Der^1 \, \LL(\x_0,\x_1)^{\wedge}$ admit lifts $\widetilde{\sigma}_1, \widetilde{\sigma}_2 \in \Der^{\Theta} \, \LL(a,b)^{\wedge}$, then   $[\sigma_1, \sigma_2]$  admits the lift $[\widetilde{\sigma}_1, \widetilde{\sigma}_2]$.

Since $(\ref{P1toEtimes})$ is geometric, it is compatible with the actions of $G^{dR}$ on the de Rham fundamental groups (the case of the Hodge realisation is \cite{HaDe}, Theorem 15.1). 
Thus $\phi$ commutes with the action of $\g$, and the morphism $(\ref{i1def})$
is the lift of the map $i_0: \g \rightarrow \Der^1\,  \LL(\x_0,\x_1)$. In particular,
\begin{equation} \label{i1sigmaequation} 
 i_1(\sigma)  \big(   \phi( \x_0) \big) = \phi(    i_0(\sigma) (\x_0) ) \qquad \hbox{ for all } \sigma \in \g\ , 
\end{equation}
where $i_0$ was defined in   $(\ref{gfraktoL})$. The injectivity of $i$  \cite{BrMTZ} and $\phi$ implies the 
\begin{thm} The map $i_1:\g \rightarrow \Der^{\Theta} \, \LL(a,b)$ is injective.
\end{thm}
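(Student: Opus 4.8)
The plan is to deduce the injectivity of $i_1$ formally from the two injectivity statements already available — that of $i$ (Theorem \ref{thmDIconj}) and that of $\phi$ — together with the single compatibility relation $(\ref{i1sigmaequation})$. Suppose $\sigma \in \g$ lies in the kernel of $i_1$, so that $i_1(\sigma)$ is the zero derivation. Evaluating it on the element $\phi(\x_0)$ gives $i_1(\sigma)\big(\phi(\x_0)\big) = 0$, and then $(\ref{i1sigmaequation})$ yields $\phi\big(i_0(\sigma)(\x_0)\big) = 0$. Since $\phi$ is injective, this forces $i_0(\sigma)(\x_0) = 0$.

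Next I would unwind the definition of $i_0$ recorded after $(\ref{gfraktoL})$: the derivation $i_0(\sigma)$ is the one sending $\x_0 \mapsto [\x_0, i(\sigma)]$ and $\x_1 \mapsto 0$, where $i(\sigma) \in \LL(\x_0,\x_1)$ is the image of $\sigma$ under the injective map of Theorem \ref{thmDIconj}. The vanishing from the first step therefore says exactly that $[\x_0, i(\sigma)] = 0$, i.e. $i(\sigma)$ lies in the centralizer of the free generator $\x_0$ inside the free Lie algebra $\LL(\x_0,\x_1)$.

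The key elementary input I would then invoke is that this centralizer is the line $\Q\,\x_0$. This follows from the analogous statement in the free associative algebra $T(\x_0,\x_1)$, whose centralizer of $\x_0$ is the polynomial subalgebra $\Q[\x_0]$; intersecting with the Lie (primitive) elements leaves only $\Q\,\x_0$. On the other hand, $i(\g)$ is contained in the depth $\geq 1$ part $D^1\LL(\x_0,\x_1)$: the Lie algebra $\g$ is generated by the $\sigma_{2n+1}$, whose images $\ad(\x_0)^{2n}\x_1 + \cdots$ all have depth $\geq 1$, and the Ihara bracket is homogeneous for the $D$-degree. Since $\Q\,\x_0 \cap D^1\LL(\x_0,\x_1) = 0$, we conclude $i(\sigma) = 0$, whence $\sigma = 0$ by the injectivity of $i$.

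I do not expect a serious obstacle here: once $(\ref{i1sigmaequation})$ and the injectivity of $i$ and $\phi$ are in hand, the argument is essentially formal. The one point deserving an explicit (standard) lemma is the centralizer computation $C_{\LL(\x_0,\x_1)}(\x_0) = \Q\,\x_0$, equivalently the injectivity of the auxiliary map $\sigma \mapsto [\x_0, i(\sigma)]$ on $\g$. I would state this separately, since it is precisely what converts the vanishing of the \emph{derivation} $i_0(\sigma)$ back into the vanishing of the \emph{element} $i(\sigma)$ to which Theorem \ref{thmDIconj} can be applied.
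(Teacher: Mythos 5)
Your argument is correct and is exactly the route the paper takes: its proof is the one-line assertion that injectivity of $i$ and of $\phi$, combined with the compatibility $i_1(\sigma)(\phi(\x_0)) = \phi(i_0(\sigma)(\x_0))$, gives the result. The only step the paper leaves implicit is the one you rightly isolate as a lemma — that $[\x_0, i(\sigma)] = 0$ forces $i(\sigma) = 0$ because the centralizer of $\x_0$ in $\LL(\x_0,\x_1)$ is $\Q\,\x_0$ and $i(\g) \subset D^1\LL(\x_0,\x_1)$ — so your write-up is a faithful, fully detailed version of the paper's proof.
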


\subsection{$B$-filtration} \label{sectBdegree}  One possible  way to cut out the depth filtration on the image of $\g$ inside $\Der^{\Theta}\, \LL(a,b)$
is via the $B$-filtration on $\PP$.  As pointed out by Hain, it can be defined as the convolution of the Hodge filtration 
$F$ and the lower central series filtration $L$:
$$B^r = (F \star L)^r = \sum_{a+b=r} F^a \cap L^b\  .$$
This induces a decreasing filtration on the Lie algebra of $\PP$.  Note that $L^b= W_{-b}$, where $W$ denotes the weight filtration.
Since the filtrations $F,W,$ and $M$ can be split simultaneously \cite{HaDe}, $B$ induces a filtration on $\LL(H)= \gr^W \gr^M \Lie\, \PP$, and 
$$B^r \LL(H) = \{ w \in \LL(a,b) :  \deg_b w \geq r\}\ , $$
is the filtration associated to the 
$B$-degree, where we define the $B$-degree on $\LL(a,b)$ to be the degree in $b$. 
 To see this, note that $B^r \gr^n_L \LL(H_{dR})= F^{n-r} \gr^n_L \LL(H_{dR})$. Since $\gr^n_L \LL(H_{dR})$ consists of words of length $n$ in $a$ and $b$, 
 and since $$\Q a \oplus \Q b=F^{-1}H_{dR} \supset F^0 H_{dR} = \Q b \ ,$$
 $F^{n-r}   \gr^n_L \LL(H_{dR})$ is spanned by words with at least $r$ letter $b$'s.
   Note also that $$2 \deg_b +2 \deg_W= \deg_M \ .$$  
    The $B$-filtration also induces a decreasing filtration $B^{\bullet}$ on $\Der^{\Theta}\,   \LL(a,b)$, such that 
    $\Der^{\Theta}\,   \LL(a,b) = B^{-1} \Der^{\Theta}\,   \LL(a,b)$.
     It is the  decreasing filtration associated to the
   grading induced by the $B$-degree.  The subspace $B^0 \, \Der^{\Theta}\,   \LL(a,b)$ is the space of derivations $\delta$ such that 
   $\delta(b) \subset B^1 \, \LL(a,b)$, i.e., such that the coefficient of $a$ in  $\delta (b)$ is zero.
   \begin{lem}  \label{lemBmotivic} The $B$-filtration on $\PP$ is motivic, i.e., it is stable under the image of the de Rham motivic Galois group  $i_1(\g)$. 
    \end{lem}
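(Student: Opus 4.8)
The plan is to turn the statement into a condition on derivations and then verify that condition on the generators of $\g$. To say that the $B$-filtration on $\LL(a,b)=\gr^W \gr^M \Lie \PP$ is stable under $i_1(\g)$ is precisely to say that every derivation in $i_1(\g)$ has non-negative $B$-degree, i.e. $i_1(\g)\subseteq B^0\,\Der^{\Theta}\LL(a,b)$. Since the $B$-filtration on $\Der^{\Theta}\LL(a,b)$ is the one attached to the $B$-degree, and the bracket of two derivations adds $B$-degrees, one has $[B^r\Der^{\Theta}\LL(a,b),\,B^s\Der^{\Theta}\LL(a,b)]\subseteq B^{r+s}\Der^{\Theta}\LL(a,b)$; in particular $B^0\Der^{\Theta}\LL(a,b)$ is a Lie subalgebra and $B^1\Der^{\Theta}\LL(a,b)$ an ideal inside it. As $\g$ is generated by the $\sigma_{2n+1}$ and $i_1$ is a morphism of Lie algebras, $i_1(\g)$ is generated by the $i_1(\sigma_{2n+1})$, so it suffices to prove $i_1(\sigma_{2n+1})\in B^0\Der^{\Theta}\LL(a,b)$ for every $n\geq 1$. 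I will in fact establish the stronger $i_1(\sigma_{2n+1})\in B^1\Der^{\Theta}\LL(a,b)$.

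The argument rests on two inputs, both already available. First, by the Hain--Matsumoto result recalled in the footnote to $(\ref{introi1def})$, the map $i_1$ respects the $M$-grading, so that $i_1(\sigma_{2n+1})$ is homogeneous for the monodromy weight $M$. Its $M$-degree is read off its leading term: by the motivic Nakamura congruence $i_1(\sigma_{2n+1})\equiv \varepsilon^{\vee}_{2n+2}\pmod{W_{-2n-3}}$, and since $\varepsilon^{\vee}_{2n+2}(a)=\ad(a)^{2n+2}(b)$ with $a,b$ of $M$-degrees $-2$ and $0$, the derivation $\varepsilon^{\vee}_{2n+2}$ — and hence all of $i_1(\sigma_{2n+1})$ — is $M$-homogeneous of degree $-4n-2$. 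Second, the same congruence shows that $i_1(\sigma_{2n+1})$ strictly raises the geometric weight: its lowest-weight part is $\varepsilon^{\vee}_{2n+2}$, which increases the word-length in $a,b$ by $2n+2$, while the remaining terms lie in $W_{-2n-3}$ and hence increase word-length by at least $2n+3$.

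Combining these with the numerical identity $2\deg_b+2\deg_W=\deg_M$ of \S\ref{sectBdegree} finishes the proof by a one-line computation. For any $W$-homogeneous component of $i_1(\sigma_{2n+1})(x)$, writing $\Delta$ for the change relative to $x$, one finds
\[
\Delta\deg_b=\tfrac{1}{2}\Delta\deg_M-\Delta\deg_W=\tfrac{1}{2}(-4n-2)+(\text{length increase})\geq -2n-1+(2n+2)=1,
\]
using that $\Delta\deg_M=-4n-2$ by $M$-homogeneity and that the length increase is at least $2n+2$. Thus $i_1(\sigma_{2n+1})$ raises the $b$-degree by at least one, i.e. lies in $B^1\Der^{\Theta}\LL(a,b)$, and the reduction above gives $i_1(\g)\subseteq B^1\Der^{\Theta}\LL(a,b)\subseteq B^0\Der^{\Theta}\LL(a,b)$, which is the claim.

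The one genuinely non-formal ingredient is the $M$-homogeneity of $i_1(\sigma_{2n+1})$, equivalently the fact that the relative monodromy-weight filtration $M$ is itself motivic; this is exactly where the Hain--Matsumoto theory of \cite{MEM} enters, and it is the step I expect to require the most care, since $M$ — unlike the geometric weight $W$, whose motivicity is automatic — is a subtle invariant of the limiting mixed Hodge structure. Once $M$-homogeneity is granted, the rest is merely the matching of the $M$-degree $-4n-2$ against the geometric-weight lower bound $2n+2$ furnished by the Nakamura congruence, together with the elementary observation that on $\LL(a,b)$ the $b$-degree is recovered from $M$ and $W$ via $2\deg_b+2\deg_W=\deg_M$.
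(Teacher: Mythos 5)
Your argument is correct in its logic, but it takes a genuinely different and much heavier route than the paper. The paper's proof of Lemma \ref{lemBmotivic} is a two-line Tannakian observation: $H_{dR}=\LL(a,b)^{ab}=\Q(0)\oplus\Q(1)$ is a direct sum of pure Tate motives, so $G^{dR}$ acts on it through its quotient $\G_m$ and the pro-unipotent part $\g$ acts trivially on the abelianization; hence $i_1(\g)(b)\subset[\LL(a,b),\LL(a,b)]\subset B^1\LL(a,b)$, which is exactly the defining condition for membership in $B^0\,\Der^{\Theta}\LL(a,b)$. No information about the $M$-grading or about the leading $W$-graded term of $i_1(\sigma_{2n+1})$ is required. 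You instead import two nontrivial external inputs --- the compatibility of $i_1$ with the $M$-grading from \cite{MEM}, and the motivic Nakamura congruence $i_1(\sigma_{2n+1})\equiv\varepsilon^{\vee}_{2n+2}\pmod{W_{-2n-3}}$ --- both of which the paper asserts but does not prove, and the second of which is a substantially deeper statement than the lemma it is being used to establish. What your route buys is the stronger conclusion $i_1(\g)\subset B^1\,\Der^{\Theta}\LL(a,b)$, which is true and consistent with Corollary \ref{corBcutout} (since $\g=D^1\g$); your bookkeeping with $2\deg_b+2\deg_W=\deg_M$ is carried out correctly. Two remarks. First, for the lemma as actually stated ($B^0$, not $B^1$) the Nakamura congruence is superfluous even inside your own framework: $M$-homogeneity of degree $-4n-2$ already forces the coefficient of $a$ (of $M$-degree $-2\neq -4n-2$ for $n\geq 1$) in $i_1(\sigma_{2n+1})(b)$ to vanish, and that vanishing is the entire content of the $B^0$ condition. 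Second, your closing instinct that the $M$-homogeneity is the delicate ingredient is exactly right, and the paper's proof is designed to avoid it altogether: the triviality of the unipotent action on $H_{dR}$ is a formal consequence of $H_{dR}$ being a direct sum of pure Tate objects and needs none of the limiting mixed Hodge structure machinery.
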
 
   \begin{proof}   We must verify that  $i_1(\g) \subset B^0 \, \Der^{\Theta}\, \LL(a,b)$. 
Since the group $G^{dR}$ acts on  $\LL(a,b)^{ab} = H_{dR}$
 through its quotient $\G_m$, and because  $H_{dR} = \Q \oplus \Q(1)$ is a direct sum of pure Tate motives, the graded Lie algebra of its pro-unipotent radical $\g$ acts trivially on $
 \LL(H_{dR})^{ab}$. Therefore  $i_1(\g)(b) \subset [\LL(H_{dR}), \LL(H_{dR})] \subset B^1 \LL(H_{dR})$. 
      \end{proof}

\begin{lem} 
Let  $\alpha \in \LL(\x_0,\x_1)^{\wedge}$. Then $\alpha \in D^r$ if and only if $\phi(\alpha) \in B^r$.
\end{lem}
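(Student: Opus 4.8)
The plan is to observe that both filtrations are in fact gradings: $D^r$ is the filtration by $\x_1$-degree (depth) on $\LL(\x_0,\x_1)$, while $B^r\LL(a,b) = \{w : \deg_b w \geq r\}$ is the filtration by $b$-degree recorded in \S\ref{sectBdegree}. Since $\phi(\x_1) = [a,b]$ has $b$-degree exactly $1$ and $\phi(\x_0) = a - {1\over 2}[b,a] + \cdots$ has $b$-degree $\geq 0$ with lowest piece $a$, and since $B$ is a Lie-algebra grading (so $[B^p,B^q]\subseteq B^{p+q}$), the forward implication is immediate: extending $\phi$ continuously, any bracket of generators containing at least $r$ copies of $\x_1$ lands in $B^r$, whence $\phi(D^r)\subseteq B^r$.

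For the converse I would argue on the associated graded. Decompose $\alpha = \sum_j \alpha_j$ into its depth-homogeneous components $\alpha_j \in \gr^j_D$, and let $s$ be the minimal depth with $\alpha_s \neq 0$. By the forward step each $\phi(\alpha_j)$ lies in $B^j$, so the $b$-degree-$s$ component of $\phi(\alpha)$ receives a contribution only from $\phi(\alpha_s)$ and equals the lowest $b$-degree part of $\phi(\alpha_s)$. Replacing each $\phi(\x_0)$ by its leading term $a$ and $\phi(\x_1)$ by $[a,b]$, this lowest part is exactly $\psi(\alpha_s)$, where $\psi\colon \LL(\x_0,\x_1)\to\LL(a,b)$ is the graded Lie morphism $\x_0\mapsto a$, $\x_1\mapsto[a,b]$. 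Thus, once $\psi$ is known to be injective, $\alpha\notin D^r$ forces $s<r$ and $\psi(\alpha_s)\neq 0$, giving a nonzero $b$-degree-$s$ component of $\phi(\alpha)$ and hence $\phi(\alpha)\notin B^r$; this is the contrapositive of the implication we want.

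The heart of the matter, and the step I expect to require the most care, is the injectivity of $\psi$, which I would deduce from Lazard elimination. On the source, $\LL(\x_0,\x_1)=\Q\x_0\ltimes\LL(\{(\ad\x_0)^n\x_1:n\geq 0\})$, the ideal generated by $\x_1$ being free on the $(\ad\x_0)^n\x_1$. On the target, eliminating $a$ shows that the ideal of $\LL(a,b)$ generated by $b$ is free on $\{(\ad a)^m b : m\geq 0\}$. Now $\psi$ sends $(\ad\x_0)^n\x_1\mapsto(\ad a)^{n+1}b$, mapping the free generators of the source ideal bijectively onto the subfamily $\{(\ad a)^m b : m\geq 1\}$ of the free generators of the target ideal; since any subset of a free generating set freely generates the subalgebra it spans, $\psi$ restricts to an isomorphism of these free Lie subalgebras. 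As $\psi$ intertwines $\ad\x_0$ with $\ad a$ and sends $\x_0\mapsto a$ (which has $b$-degree $0$, hence lies outside the image ideal), $\psi$ respects the two semidirect-product decompositions and is therefore injective. Finally I would note that $\psi$ is injective on each bigraded piece (weight, depth) and that the $B$-filtration is a closed filtration on the completion, so the argument goes through verbatim for $\alpha\in\LL(\x_0,\x_1)^\wedge$ after organising the computation weight by weight; keeping track of these completion issues is the only real bookkeeping obstacle.
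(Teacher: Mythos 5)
Your proof is correct and follows essentially the same route as the paper: the forward inclusion is read off from the definition of $\phi$, and the converse is reduced to the injectivity of the associated graded morphism $\phi^0:\gr^{\bullet}_D\LL(\x_0,\x_1)\to\gr^{\bullet}_B\LL(a,b)$, $\x_0\mapsto a$, $\x_1\mapsto[a,b]$ (your $\psi$). The only difference is that the paper asserts this injectivity without proof, whereas you supply a correct justification via Lazard elimination.
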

\begin{proof} The fact that $\phi \, D^r \subset B^r$ is clear from the definition  $(\ref{phidefn})$. The converse follows from the fact that 
 the associated graded  morphism
$$\phi^0 : \gr^{\bullet}_D \LL(\x_0,\x_1) \To   \gr^{\bullet}_B \LL(a,b)$$
given by $\phi^0(\x_0)=a$ and $\phi^0(\x_1)=[a,b]$,  is injective.
\end{proof} 
Observe that if $\delta \in \Der^1 \, \LL(\x_0,\x_1)$ then $\delta \in D^r$ if and only if $\delta(\x_0) \in D^r$.
\begin{lem}   Let $\delta \in B^0 \, \Der^{\Theta}\,   \LL(a,b)$.  The following are equivalent
\begin{enumerate}
\item $ \delta \in B^r  \, \Der^{\Theta}\,   \LL(a,b)$
\item $\delta(a) \in B^r \,  \LL(a,b)$
\item $\delta(\phi(\x_0)) \in B^r\, \LL(a,b)$\ .
\end{enumerate}
\end{lem}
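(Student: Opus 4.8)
The plan is to prove the cycle of implications $(1)\Rightarrow(2)$, $(1)\Rightarrow(3)$, $(2)\Rightarrow(1)$ and $(3)\Rightarrow(1)$, of which the first two are formal and the last two carry the content. For $(1)\Rightarrow(2)$ and $(1)\Rightarrow(3)$, note that both $a$ and $\phi(\x_0)$ lie in $B^0\LL(a,b)$: the expansion $(\ref{phidefn})$ shows that $\phi(\x_0)=a+(\hbox{terms of }B\hbox{-degree}\geq 1)$, so its lowest term is exactly $a$. Hence if $\delta\in B^r\,\Der^{\Theta}\LL(a,b)$, meaning $\delta$ raises the $B$-degree by at least $r$, then $\delta(a)$ and $\delta(\phi(\x_0))$ both lie in $B^{0+r}=B^r$.

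The main obstacle is $(2)\Rightarrow(1)$: one must recover control of $\delta$ on all of $\LL(a,b)$, and in particular on $b$, from control of $\delta(a)$ alone. Here I would use the defining relation $\delta(\Theta)=0$, which gives $[\delta(a),b]+[a,\delta(b)]=0$, i.e.\ $\ad(a)\,\delta(b)=-[\delta(a),b]$. Assuming $(2)$, the right-hand side lies in $B^{r+1}$, since $\delta(a)\in B^r$ and $b\in B^1$. Now $\ad(a)$ preserves the $B$-grading because $\deg_b a=0$, and since $\delta\in B^0$ we have $\delta(b)\in B^1$, so $\delta(b)$ has no component along $\Q a=\ker\ad(a)$ (the centralizer of $a$ in $\LL(a,b)$, the one external Lie-theoretic fact used, already invoked in the discussion of derivations above). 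Decomposing $\delta(b)=\sum_{k\geq 1}c_k$ into $B$-homogeneous pieces and matching $B$-degrees, each $c_k$ with $k\leq r$ must satisfy $\ad(a)\,c_k=0$, hence $c_k\in\Q a$, hence $c_k=0$; therefore $\delta(b)\in B^{r+1}$. Together with $\delta(a)\in B^r$ this shows $\delta$ raises the $B$-degree of both generators by at least $r$, so $\delta\in B^r\,\Der^{\Theta}\LL(a,b)$.

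For $(3)\Rightarrow(1)$, with $(1)\Leftrightarrow(2)$ now in hand, it is cleanest to compare $B$-degrees. Write $\deg_B$ for the largest filtration index containing a nonzero element, which is well-defined since $B$ restricts to a finite filtration on each weight-homogeneous component. The equivalence $(1)\Leftrightarrow(2)$ says precisely that $\deg_B\delta=\deg_B\delta(a)$, so it suffices to check that $\deg_B\delta(\phi(\x_0))=\deg_B\delta$. The case $\delta=0$ is trivial; otherwise set $d=\deg_B\delta$. Writing $\phi(\x_0)=a+(\phi(\x_0)-a)$ with $\phi(\x_0)-a\in B^1$, the term $\delta(\phi(\x_0)-a)$ lies in $B^{d+1}$, whereas the $B$-degree-$d$ homogeneous component of $\delta(a)$ is nonzero because $\delta(a)\in B^d\setminus B^{d+1}$. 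Thus the degree-$d$ component of $\delta(\phi(\x_0))$ coincides with that of $\delta(a)$ and is nonzero, giving $\deg_B\delta(\phi(\x_0))=d$. Consequently $\delta(\phi(\x_0))\in B^r$ iff $d\geq r$ iff $\delta\in B^r\,\Der^{\Theta}\LL(a,b)$, closing the cycle.

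An alternative to the degree argument for $(3)\Rightarrow(2)$ is a short induction on $r$: from $\delta(\phi(\x_0))\in B^r\subseteq B^{r-1}$ one gets $\delta\in B^{r-1}\,\Der^{\Theta}\LL(a,b)$ by the inductive hypothesis and $(2)\Rightarrow(1)$, which forces the tail $\sum_{k\geq 1}\frac{B_k}{k!}\,\delta(\ad(b)^k a)$ into $B^r$ (each summand lies in $B^{k+r-1}\subseteq B^r$), whence $\delta(a)\equiv\delta(\phi(\x_0))\in B^r$. Either way, the decisive input is the inversion of $\ad(a)$ in a $B$-degree-controlled manner in step $(2)\Rightarrow(1)$, and I expect that to be where the argument really has to be made; the rest is a bookkeeping of filtrations.
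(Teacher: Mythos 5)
Your proposal is correct and follows essentially the same route as the paper: $(1)\Rightarrow(2),(3)$ are immediate, $(2)\Rightarrow(1)$ uses $\delta[a,b]=0$ together with the fact that the centralizer of $a$ is $\Q a$ to force $\delta(b)\in B^{r+1}$, and $(3)\Rightarrow(1)$ exploits that $\phi(\x_0)-a\in B^1$ so that $\delta(\phi(\x_0))$ and $\delta(a)$ agree to leading $B$-order. Your ``alternative'' induction for $(3)\Rightarrow(2)$ is in fact the paper's own bootstrapping argument, and your degree-comparison phrasing is an equivalent repackaging of it.
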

\begin{proof}
Clearly $(1)$ implies $(2)$ and $(3)$.  Now suppose that $(2)$ holds. We have
$$0 = \delta[a,b] = [a, \delta(b)]+ [\delta(a), b]$$
which implies that $[a,\delta(b)] \in B^{r+1} \, \LL(a,b)$.  Since the coefficient of $a$ in $\delta(b)$ vanishes, this implies that $\delta(b) \in B^{r+1} \LL(a,b)$. Together with $\delta(a) \in B^r$ this implies $(1)$.
 
 Now suppose $(3)$ holds. Write $\phi(\x_0)= a +w$ where $w\in B^1 \LL(a,b)$, and by $(3)$, 
   $$\delta(a) +\delta(w) \in B^r\ .$$
  If we have shown that  $\delta(a) \in B^i$, then by $(2) \Rightarrow (1)$, we have $\delta \in B^i$ and hence  $\delta(w) \in B^{i+1}$.  The previous equation
  then implies $\delta(a) \in B^{\min(i+1,r)}$. Starting with $i=0$, repeat this argument to deduce that $\delta \in B^r$ which proves $(3) \Rightarrow (1)$.
\end{proof} 
\begin{prop}  Let $\sigma \in \Der^1 \LL(\x_0,\x_1)$ which lifts to an element $\widetilde{\sigma} \in B^0\, \Der^{\Theta} \, \LL(a,b)$. Then
$ \sigma \in D^r$ if and only if $\widetilde{\sigma} \in B^r$.
\end{prop}
\begin{proof} We have
$\phi( \sigma(\x_0)) = \widetilde{\sigma} ( \phi(\x_0)).$
Now apply the previous two lemmas to deduce that $\sigma\in D^r$ if and only if $\widetilde{\sigma} \in B^r$.
\end{proof}
 \begin{cor} \label{corBcutout} The $B$-filtration cuts out the depth filtration on the image of $\g$:  \begin{equation} 
B^r  \cap i_1(\g)=  i_1(D^{r } \g)   \ . 
 \end{equation} 
 \end{cor}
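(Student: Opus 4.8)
The plan is to obtain the corollary as a formal consequence of the preceding Proposition, after checking that for every $\sigma \in \g$ the pair $\big(i_0(\sigma), i_1(\sigma)\big)$ is of the type to which that Proposition applies. Two facts must be assembled first. By $(\ref{i1sigmaequation})$, which expresses that the geometric morphism $(\ref{P1toEtimes})$ intertwines the two Galois actions, the derivation $i_1(\sigma) \in \Der^{\Theta}\, \LL(a,b)$ is exactly the lift of $i_0(\sigma) \in \Der^1\, \LL(\x_0,\x_1)$ along the Hain morphism $\phi$. By Lemma $\ref{lemBmotivic}$, this lift moreover lies in $B^0\, \Der^{\Theta}\, \LL(a,b)$. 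Hence $i_0(\sigma)$ and its lift $i_1(\sigma)$ satisfy precisely the hypotheses of the Proposition.

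Next I would run the resulting chain of equivalences. Fix $\sigma \in \g$ and an integer $r$. Applying the Proposition to $i_0(\sigma)$, with lift $i_1(\sigma) \in B^0$, yields
\[
 i_0(\sigma) \in D^r \quad \iff \quad i_1(\sigma) \in B^r \ .
\]
It then remains to match the left-hand condition with membership in $D^r\g$. By definition the depth filtration on $\g$ is the one induced from $\LL(\x_0,\x_1)$ through the embedding $(\ref{gfraktoL})$, so $\sigma \in D^r\g$ means $i(\sigma) \in D^r$. Since $i_0(\sigma)$ is the derivation $\x_0 \mapsto [\x_0, i(\sigma)]$, $\x_1 \mapsto 0$, since $i(\g) \subseteq D^1\, \LL(\x_0,\x_1)$, and since $\ad(\x_0)$ preserves the depth grading and is injective on $D^1$ (its kernel is $\Q\x_0$, which meets $D^1$ trivially), one has $i_0(\sigma)(\x_0) \in D^r$ if and only if $i(\sigma) \in D^r$. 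Combined with the observation recorded just before the Proposition, namely that $\delta \in D^r \iff \delta(\x_0) \in D^r$ for $\delta \in \Der^1\, \LL(\x_0,\x_1)$, this gives $i_0(\sigma) \in D^r \iff \sigma \in D^r\g$. Altogether, $\sigma \in D^r\g \iff i_1(\sigma) \in B^r$.

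Finally I would package this into the stated equality of subspaces of $\Der^{\Theta}\, \LL(a,b)$. The inclusion $i_1(D^r\g) \subseteq B^r \cap i_1(\g)$ is immediate from the equivalence. For the reverse inclusion, take $x \in B^r \cap i_1(\g)$; by injectivity of $i_1$ there is a unique $\sigma \in \g$ with $x = i_1(\sigma)$, and $x \in B^r$ forces $\sigma \in D^r\g$, whence $x \in i_1(D^r\g)$. I do not expect any genuine obstacle here: the substantive work is already carried by the Proposition and by the two lemmas identifying the $B$-filtration with the degree in $b$ and showing it is motivic. The corollary is a bookkeeping step, and the only imported non-formal ingredient is the injectivity of $i_1$, which is needed to invert $i_1$ on its image in the reverse inclusion.
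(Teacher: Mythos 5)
Your proposal is correct and follows exactly the route the paper takes: its entire proof is ``Apply Lemma \ref{lemBmotivic}, the previous proposition, and $i_1\phi = \phi\, i_0$,'' which is precisely the chain you assemble (with the extra, harmless, verification that $i_0(\sigma)\in D^r$ iff $\sigma\in D^r\g$ via the injectivity and $D$-homogeneity of $\ad(\x_0)$ on $D^1$). No gaps; you have simply written out the bookkeeping the paper leaves implicit.
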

 \begin{proof} Apply lemma $\ref{lemBmotivic}$, the previous proposition, and
 $i_1 \phi = \phi \, i_0$ $(\ref{i1sigmaequation})$.  \end{proof}
By considering the symmetry $t\mapsto 1-t$ on $\Pro^1\backslash \{0,1,\infty\}$, one knows that an element $\sigma \in \g$ of degree $-m$ is uniquely determined by its image in $D^1 / D^{\lfloor \! {m\over 2} \!\rfloor}  \LL(\x_0,\x_1) $ (duality relation). It follows from the injectivity of $\phi$ and $i$ that:

\begin{cor}  
An element  $\sigma \in \g$ of degree $-m$ is uniquely determined by the image of  $i_1(\sigma)$ in 
$ B^1 / B^{\lfloor \! {m\over 2} \!\rfloor}   \LL(a,b)\ .$
\end{cor}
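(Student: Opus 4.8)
The plan is to transport the duality relation on the $\Pro^1\backslash\{0,1,\infty\}$ side, which is phrased via the depth filtration $D$ on $\LL(\x_0,\x_1)$, across the Hain morphism $\phi$ to the $B$-filtration on $\LL(a,b)$. Since all of the maps involved are $\Q$-linear, it suffices to check that the kernel is trivial: if $\sigma\in\g$ has degree $-m$ and the image of $i_1(\sigma)$ in $B^1/B^{\lfloor m/2\rfloor}\,\LL(a,b)$ vanishes, then $\sigma=0$.

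First I would pin down the identification used in the target. By Lemma~\ref{lemBmotivic} we have $i_1(\sigma)\in B^0\,\Der^{\Theta}\,\LL(a,b)$, so it is recorded by $i_1(\sigma)(a)$ under the embedding $\delta\mapsto\delta(a)$ of \S\ref{sectBdegree}; as $\sigma$ has degree $\leq -2$, this value is homogeneous of degree $\geq 2$ and therefore lies in $B^1$ (in degrees $\geq 2$ one has $B^0\,\LL(a,b)=B^1\,\LL(a,b)$, the only pure-$a$ Lie word being $a$ itself). The preceding lemma identifies the $B$-filtration on derivations with the $B$-filtration on their values at $a$, so the vanishing of the image of $i_1(\sigma)$ in $B^1/B^{\lfloor m/2\rfloor}\,\LL(a,b)$ says exactly that $i_1(\sigma)\in B^{\lfloor m/2\rfloor}\,\Der^{\Theta}\,\LL(a,b)$.

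Next I would invoke the preceding proposition. The derivation $i_0(\sigma)\in\Der^1\,\LL(\x_0,\x_1)$ lifts to $i_1(\sigma)\in B^0\,\Der^{\Theta}\,\LL(a,b)$ by $(\ref{i1sigmaequation})$, so that proposition gives $i_0(\sigma)\in D^r$ if and only if $i_1(\sigma)\in B^r$. Taking $r=\lfloor m/2\rfloor$, the hypothesis forces $i_0(\sigma)\in D^{\lfloor m/2\rfloor}$, which (since a derivation in $\Der^1$ lies in $D^r$ exactly when its value on $\x_0$ does) means that the image of $\sigma$ in $D^1/D^{\lfloor m/2\rfloor}\,\LL(\x_0,\x_1)$ is zero; the duality relation quoted above then yields $\sigma=0$. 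The only substantive input is that duality relation, which comes from the involution $t\mapsto 1-t$ together with the injectivity of $i$ and of $\phi$; granting it, the corollary is a formal consequence of the $D$--$B$ dictionary of \S\ref{sectBdegree}, the single point needing care being the degree bookkeeping that places $i_1(\sigma)(a)$ in $B^1$.
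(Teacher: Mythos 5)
Your argument is correct and is essentially the paper's own: the corollary is deduced by transporting the duality relation (uniqueness of $\sigma$ from its image in $D^1/D^{\lfloor m/2\rfloor}\LL(\x_0,\x_1)$) across the Hain morphism using the injectivity of $i$ and $\phi$ and the $D$--$B$ comparison of \S\ref{sectBdegree}. Your extra bookkeeping verifying that $i_1(\sigma)(a)$ indeed lies in $B^1$ is a harmless refinement of what the paper leaves implicit.
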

\begin{rem}
Thus the `tails' of the elements $i_1(\sigma)$ in the $B$-filtration, which may be infinitely long, are uniquely determined from their `heads' $i_1(\sigma) \pmod { B^{\lfloor \! {m\over 2} \!\rfloor} }$. Furthermore, one can show (for instance, using the description \cite{MMV}, \S9 of the group  of 
automorphisms of a semi-direct product) that, modulo $B^{m}$, $i_1(\sigma)$ is equivalent to an element of $\ue$, so can be expressed (non-uniquely) in terms of the geometric derivations $\varepsilon_{2n+2}^{\vee}$ for $n\geq -1$. For instance, a choice of zeta element of degree $-2n-1$ admits an expansion
$$i_1(\sigma_{2n+1}) \equiv  \sum_{a_1,\ldots, a_r \in 2 \N}  \lambda^{(2n+1)}_{a_1,\ldots, a_r}  \big[ \varepsilon^{\vee}_{a_1}, \big[\varepsilon^{\vee}_{a_2},\big[ \ldots , \big[ \varepsilon^{\vee}_{a_{r-1}}, \varepsilon^{\vee}_{a_{r}}\big]\cdots \big] \pmod{B^{2n+1}}\  $$
where $\lambda^{(2n+1)}_{a_1,\ldots, a_r}\in \Q$.
 The non-uniqueness of this expansion corresponds to the fact that there exist relations between the generators $\varepsilon_{2n+2}^{\vee}$ in $\ue$ \cite{Po}. Nonetheless, this geometric expansion or `anatomy' of the elements $i_1(\sigma^c_{2n+1})$, for $n\geq 2$, will be determined explicitly modulo $B^4$ below. \end{rem}


\section{Commutative power series} \label{sectCommSeries}
We discuss commutative power series representations for the Lie algebras $\LL(\x_0,\x_1)$, $\LL(a,b)$ and 
describe the composition laws for their derivation algebras.
\subsection{Commutative power series} \label{algtopoly} The method of commutative power series
is based on  the observation that there is an isomorphism of $\Q$-vector spaces, for $r\geq 0$
\begin{eqnarray} \label{rhodef}
\rho \quad : \quad \gr^r_D T(\x_0,\x_1)   & \overset{\sim}{\To} & \Q[y_0,\ldots, y_r] \\
\x_0^{i_0} \x_1\x_0^{i_1} \ldots  \x_1 \x_0^{i_r}  & \mapsto & y_0^{i_0} \ldots y_r^{i_r} \nonumber 
\end{eqnarray}
Let us denote by 
$$ P =    \bigoplus_{r\geq 0} \Q[y_0,\ldots, y_r]\ . $$
Since $\LL(\x_0,\x_1)$ is $D$-graded, $(\ref{rhodef})$  induces a map $\rho : \LL(\x_0,\x_1) \rightarrow P$.  One checks that
\begin{equation}  \label{rhoofadx0x1}
\rho( \mathrm{ad}(\x_0)^{2n}(\x_1 ) ) =  (y_0-y_1)^{2n} \ .
\end{equation}
Furthermore, one  shows (\cite{BrDepth}, lemma 6.2) that the image of $\gr^r_D \LL(\x_0,\x_1)$, for $r\geq 1$, is contained in the 
subspace of polynomials which are translation invariant:
$$f(y_0,\ldots, y_r) = f(y_0+\lambda, \ldots, y_r+ \lambda)  \quad \hbox{ for all }  \quad \lambda \in \Q\ .$$
Such a polynomial $f$ is uniquely determined by its image in
\begin{eqnarray}  \label{transmap}
 \Q[y_0,\ldots, y_r ]  &  \To &   \Q[x_1,\ldots, x_r]   \\ 
\qquad \qquad f(y_0,\ldots, y_r)   & \mapsto &   \overline{f}(x_1,\ldots, x_r) = f(0,x_1,\ldots, x_r)\ . \nonumber
\end{eqnarray} 
We sometimes call this the reduced representation of a translation-invariant polynomial, and it applies equally well to translation-invariant rational functions. 
In this way, since $\LL(\x_0,\x_1) \cong \gr^{\bullet}_D \LL(\x_0,\x_1)$ is  graded  with respect to the $D$-degree,  every element of $D^1\LL(\x_0,\x_1)$ can be uniquely written
\begin{eqnarray} \label{LtoQ}
\overline{\rho}\quad : D^1 \LL(\x_0,\x_1)  & \To &  \bigoplus_{r\geq 1} \Q[x_1,\ldots, x_r]       \\
\sigma & \mapsto & \sum_{r\geq 1} \sigma^{(r)}    \nonumber 
\end{eqnarray}
where $\sigma^{(r)}$ denotes the polynomial representation of the component in depth $r$ of $\sigma$.
The zeta elements satisfy $\sigma_{2n+1}^{(1)} = x_1^{2n}$, for all $n \geq 1$ by $(\ref{rhoofadx0x1})$.
\subsection{Concatenation products}
The  concatenation of words in the alphabet $\x_0,\x_1$  defines a non-commutative multiplication law $f,g \mapsto f\cdot g$:
$$\gr^{r}_D \LL(\x_0,\x_1) \times \gr^{s}_D \LL(\x_0,\x_1) \To \gr^{r+s}_D \LL(\x_0,\x_1)\ .$$
On the level of commutative power series, it is the operation
\begin{eqnarray} \label{cdotdefn}
 \Q[y_0,\ldots, y_r] \otimes \Q[y_0,\ldots, y_{r+s} ] & \To&  \Q[y_0,\ldots, y_{r+s} ] \\
f(y_0,\ldots, y_r) \cdot  g(y_0,\ldots, y_{r+s}) & =  & f(y_0,\ldots, y_r) g(y_r, \ldots, y_{r+s}) \nonumber
\end{eqnarray} 
It follows from the definition of the linearized Ihara action $(\ref{circbdef})$ that 
\begin{eqnarray} \label{Distrib}
f \circb (g\cdot h) = (f \circb g) \cdot h + g\cdot (f \circb h) - g \cdot f \cdot h  \  .
\end{eqnarray} 
Note that equation $(\ref{Distrib})$ is equivalent to the condition that  the linear map 
$$ w \mapsto f\circb w - w \cdot f$$ is a derivation with respect to $\cdot$.
There is another concatenation product, denoted by $\studot$,  which comes from the stuffle Hopf algebra \cite{AA}. It will only be used once in this paper, so will not be discussed in any detail here.

\subsection{Linearized Ihara action} \label{sectIharaonpoly}
The  operator $\circb: T(\x_0,\x_1)  \otimes_{\Q} T(\x_0,\x_1)  \rightarrow T(\x_0,\x_1)$  
is   homogeneous for the $D$-degree, and therefore  defines a map
\begin{eqnarray} \circb: \Q[y_0,\ldots, y_r] \otimes_{\Q} \Q[y_0,\ldots, y_s]   & \To & \Q[y_0,\ldots, y_{r+s}]   \\
f(y_0,\ldots, y_r) \otimes g(y_0,\ldots, y_s)  & \mapsto &  f\circb g\, ( y_0,\ldots, y_{r+s}) \nonumber
\end{eqnarray}
whose $r,s$ component  is given explicitly by the  formula
\begin{multline}\label{circformula}
f \circb g \, (y_0,\ldots, y_{r+s})  =  \sum_{i=0}^s f(y_i,y_{i+1}, \ldots, y_{i+r}) g(y_0,\ldots, y_i, y_{i+r+1}, \ldots, y_{r+s}) + \\
  (-1)^{\deg f + r} \sum_{i=1}^s f(y_{i+r},\ldots, y_{i+1},y_i) g(y_0,\ldots, y_{i-1}, y_{i+r}, \ldots, y_{r+s}) \ . 
 \end{multline}
 This  can be read off  from equation $(\ref{circbdef})$.  Antisymmetrizing gives a pairing
 $$\{ f , g\} = f\circb g - g\circb b$$
 which coincides with the Ihara bracket on the image of $\LL(\x_0,\x_1)$. 
Clearly, if $f,g$ are both translation invariant, then so too are $f\circb g$ and $\{f,g\}$.
\begin{example} \label{exampleofIharaactionlength2} When $r=s=1$ and $f,g \in \Q[y_0,y_1]$ are translation-invariant, one verifies that the image of  $\{f,g\}\in \Q[y_0,y_1,y_2]$  in $\Q[x_1,x_2]$ is given by 
$$ \overline{f}(x_1)\overline{g}(x_2)
-\overline{g}(x_1)\overline{f}(x_2)+ \overline{f}(x_2-x_1) (\overline{g}(x_1) -\overline{g}(x_2)) + 
(\overline{f}(x_2)-\overline{f}(x_1))\overline{g}(x_2-x_1) \ .$$

\end{example}
The Ihara action $(\ref{circformula})$ extends to rational functions by the identical formula.

\subsection{Derivations on $\LL(\x_0,\x_1)$ and  power series} Consider the isomorphism
\begin{eqnarray} \label{deltadefn}
\LL(\x_0,\x_1)  & \overset{\sim}{\To} & \Der^1 \, \LL(\x_0,\x_1)  \\
w & \mapsto & \delta_w \nonumber
\end{eqnarray} 
where $\delta_w$ denotes the derivation $\delta_w(\x_0) =w$, $\delta_w(\x_1)=0$. This isomorphism respects the $D$-grading on both sides.
 Let 
$$P' =  \bigoplus_{r\geq 1} \Q[y_0,\ldots, y_r] {1 \over y_0-y_r}\ . $$
Using $(\ref{rhodef})$, define a linear map
\begin{eqnarray} \label{rhoprimedefn}
\rho': D^1 \Der^1\, \LL (\x_0, \x_1)  &\To &  P'  \\ 
\delta_w & \mapsto &  \sum_{r \geq 1} \rho^r(w) {1 \over y_0-y_r}\ .  \nonumber
\end{eqnarray}

\begin{prop}   \label{propAction1} The following diagram commutes:
$$ 
\begin{array}{cccccc}
 & D^1\Der^1 \LL(\x_0,\x_1) &  \times& \LL(\x_0,\x_1)  & \To    & \LL(\x_0,\x_1)  \\
 &\downarrow_{\rho'}   &&\downarrow_{\rho} &   &  \downarrow_{\rho}  \\
\odot :  &    P'   &\times& P  &  \To   & P    
\end{array}
$$
where  for $f\in P'$ and $g\in P$, 
\begin{equation} \label{odotdefn}
f\odot g =  f \circb g - f\cdot g  \ .
\end{equation}
Furthermore, the composition of derivations is given by the linearised Ihara bracket:
\begin{equation} \label{alphabracketeqn}
\rho'  ([ \delta_w, \delta_v] )=  \{\rho'(\delta_w), \rho'(\delta_v)\} \ .
\end{equation} 
Thus $\rho': D^1 \Der^1 \LL(\x_0,\x_1) \rightarrow (P', \{ , \})$ is a morphism of Lie algebras. 
\end{prop}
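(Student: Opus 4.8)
The plan is to derive both statements from a single structural fact: under $\rho$ the operator $\delta_w$ becomes the operation $\rho'(\delta_w)\odot(-)$, and this operation is a derivation of the concatenation product $\cdot$. First I would record that $\rho$, extended to all of $\gr^{\bullet}_D T(\x_0,\x_1)$ by $(\ref{rhodef})$, is an isomorphism of graded associative algebras from $(\gr^{\bullet}_D T(\x_0,\x_1),\text{concatenation})$ onto $(P,\cdot)$. This is a direct check on monomials: concatenating $\x_0^{i_0}\x_1\cdots\x_0^{i_r}$ with $\x_0^{j_0}\x_1\cdots\x_0^{j_s}$ merges the two central exponents into $i_r+j_0$, which is exactly the effect of the substitution $g(y_r,\ldots,y_{r+s})$ in $(\ref{cdotdefn})$. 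Hence $(P,\cdot)$ is generated as an algebra by $\rho(\x_0)=y_0$ and $\rho(\x_1)=1$. Now $\delta_w$ extends to a derivation of the associative algebra $T(\x_0,\x_1)$, so $g\mapsto\rho(\delta_w(g))$ is a derivation of $(P,\cdot)$; and by the distributivity $(\ref{Distrib})$ (which is a formal identity surviving the extension of $\circb$ to rational functions) the map $g\mapsto\rho'(\delta_w)\odot g$ is also a derivation of $(P,\cdot)$. Two derivations agreeing on generators agree everywhere, so for part~1 it suffices to verify the diagram on $g=\x_0$ and $g=\x_1$.

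For $g=\x_0$ I would compute from $(\ref{circformula})$ that, for $f$ of depth $r$, one has $f\circb y_0=y_0 f$ and $f\cdot y_0=y_r f$, hence $f\odot y_0=(y_0-y_r)f$ by $(\ref{odotdefn})$. Applied to $f=\rho'(\delta_w)$, whose depth-$r$ part carries the factor $1/(y_0-y_r)$ by $(\ref{rhoprimedefn})$, the pole cancels and yields $\rho'(\delta_w)\odot y_0=\rho(w)=\rho(\delta_w(\x_0))$; this is precisely the role of the denominator in the definition of $\rho'$. For $g=\x_1$ the left-hand side vanishes since $\delta_w(\x_1)=0$, so I must check $\rho'(\delta_w)\odot 1=0$. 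Evaluating $(\ref{circformula})$ at $g=1$ and subtracting $f\cdot 1$ leaves a combination $f(y_1,\ldots,y_{r+1})\pm f(y_{r+1},\ldots,y_1)$, which vanishes for $f=\rho'(\delta_w)$ because $w$ is a Lie element: primitivity gives $S(w)=-w$, i.e.\ $\rho(w)(y_0,\ldots,y_r)=(-1)^{\mathrm{wt}(w)+1}\rho(w)(y_r,\ldots,y_0)$, which is exactly the required reversal symmetry.

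For part~2, I would note $[\delta_w,\delta_v]=\delta_u$ with $u=\delta_w(v)-\delta_v(w)$, and that $\rho'(\delta_u)$ is pinned down among elements of $P'$ by $\rho'(\delta_u)\odot y_0=\rho(u)$, since $f\mapsto f\odot y_0$ acts as multiplication by $y_0-y_r$ in depth $r$ and is therefore injective on $P'$. Writing $F=\rho'(\delta_w)$, $G=\rho'(\delta_v)$, part~1 gives $G\odot y_0=\rho(v)$ and then $F\odot(G\odot y_0)=\rho(\delta_w(v))$, and symmetrically, so $F\odot(G\odot y_0)-G\odot(F\odot y_0)=\rho(u)$. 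It then remains to prove the identity
\[
\{F,G\}\odot g = F\odot(G\odot g)-G\odot(F\odot g)\qquad(g\in P).
\]
Expanding both sides via $(\ref{odotdefn})$ and $(\ref{Distrib})$, all purely-concatenation terms cancel in pairs together with $\{F,G\}\cdot g$, leaving the single requirement $F\circb(G\circb g)-(F\circb G)\circb g=G\circb(F\circb g)-(G\circb F)\circb g$, i.e.\ $A(F,G,g)=A(G,F,g)$, which is precisely $(\ref{Assoc})$. Taking $g=y_0$ and using injectivity of $\odot y_0$ gives $\{F,G\}=\rho'(\delta_u)=\rho'([\delta_w,\delta_v])$, whence $\rho'$ is a Lie morphism.

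The steps that require genuine care are the bookkeeping of the pole $1/(y_0-y_r)$: one must check that $\odot$ lands in $P$ on the relevant elements (guaranteed because it reproduces $\rho(\delta_w(g))$), that $\odot y_0$ is injective so an element of $P'$ is determined by its image, and that the formal identities $(\ref{Distrib})$ and $(\ref{Assoc})$ persist after extending $\circb$ to rational functions. The only structural input is $(\ref{Assoc})$, which converts the pre-Lie-type identity above into the statement that $F\mapsto(F\odot-)$ is a Lie morphism; I expect this reduction, rather than the generator checks, to be the conceptual heart, while the reversal symmetry used for $g=\x_1$ is the one subtlety in part~1.
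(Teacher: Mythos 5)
Your proposal is correct and follows essentially the same route as the paper's proof: reduce the first claim to the generators $\x_0,\x_1$ via the derivation property coming from $(\ref{Distrib})$, compute $f\odot y_0=(y_0-y_r)f$ and $f\odot 1=0$ using the reversal antisymmetry of Lie elements, and deduce $(\ref{alphabracketeqn})$ from the identity $\{f,g\}\odot h=f\odot(g\odot h)-g\odot(f\odot h)$ (a consequence of $(\ref{Assoc})$ and $(\ref{Distrib})$) together with faithfulness of the action on $y_0$. The extra details you supply (that $\rho$ intertwines concatenation with $\cdot$, and that $\odot\, y_0$ is injective on $P'$) are just explicit versions of steps the paper leaves implicit.
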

\begin{proof}
The shuffle distributivity law $(\ref{Distrib})$, and the remark which follows, implies that  $\odot$ is a derivation:
$f \odot (g \cdot h) = (f\odot g) \cdot h +  g\cdot ( f \odot h)$, for all $f \in P'$, $g,h \in P$.
It therefore suffices to show that for all $w\in \LL(\x_0,\x_1)$, and $i=0,1$, we have 
$$\rho'(\delta_w) \odot \rho(\x_i) = \rho(\delta_w(\x_i))\ .$$
Without loss of generality, assume that $w$ is of $D$-degree $r \geq 1$, and write $f= \rho'(w)$. Since $w\in \LL(\x_0,\x_1)$, we have
$w+w^*=0$ and hence 
$$f(y_0,\ldots, y_r) + (-1)^{\deg f+r} f(y_r,\ldots, y_0) =0\ .$$ 
We check that  $\rho(\x_0)= y_0 \in \Q[y_0]$ and $\rho(\x_1)=1 \in \Q[y_0,y_1]$, and verify that
\begin{eqnarray} 
 f(y_0,\ldots, y_r) \circb y_0  & = & y_0 f(y_0,\ldots, y_r) \nonumber \\
 f(y_0,\ldots, y_r) \circb 1  & =  &  f(y_0,\ldots, y_r) \nonumber
\end{eqnarray} 
directly from the definition $(\ref{circformula})$, applied in the cases $s=0, g=y_0$ and $s=1, g=1$ respectively. 
Via $(\ref{odotdefn})$, these equations imply that $f \odot y_0 = (y_0-y_r) f$ and $f\odot 1 = 0$.
This proves that $\rho'(\delta_w) \odot \rho(\x_0)= \rho(w)$  and $\rho'(\delta_w) \odot \rho(\x_1)=0 $ as required. For the last part,
use the fact that  $\Der^1 \LL(\x_0,\x_1)$ acts faithfully on $\LL(\x_0,\x_1)$  and the identity
$$ f\odot (g \odot h) - g \odot (f \odot h) = \{f,g\} \odot h$$
which follows from  $(\ref{odotdefn})$, $(\ref{Distrib})$ and $(\ref{Assoc})$. \end{proof}

The following corollary is not required for the remainder of this paper.
\begin{cor} \label{cordx1} Let  $w \in \LL(\x_0,\x_1)$ of $D$-degree $r$, and write $f=\overline{\rho}^{r}(w)$. 
Then
\begin{equation}
\overline{\rho}^{(r+1)}( \delta_{\x_1} w) =   \{ fx_r^{-1} , x_1^{-1}\}  x_{r+1} \quad \in \quad  \Q[x_1,\ldots, x_{r+1}]
\end{equation} 
\end{cor}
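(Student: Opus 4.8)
The plan is to deduce the formula from Proposition $\ref{propAction1}$, and specifically from the bracket identity $(\ref{alphabracketeqn})$, rather than from a bare expansion of the linearised Ihara action. Throughout, assume $r\geq 1$, so that $f=\overline{\rho}^{\,r}(w)$ and the variables $x_r,x_{r+1}$ are meaningful. The starting observation is that $\delta_{\x_1}(w)$ is itself the value on $\x_0$ of a commutator of derivations: since $\delta_w(\x_1)=0$, one has
$$[\delta_{\x_1},\delta_w](\x_0) = \delta_{\x_1}\big(\delta_w(\x_0)\big) - \delta_w\big(\delta_{\x_1}(\x_0)\big) = \delta_{\x_1}(w)\ ,$$
and both derivations annihilate $\x_1$, whence $[\delta_{\x_1},\delta_w] = \delta_{\delta_{\x_1}(w)}$ in $\Der^1\LL(\x_0,\x_1)$. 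Since $\x_1$ and $w$ have $D$-degrees $1$ and $r$, the derivations $\delta_{\x_1}$ and $\delta_w$ lie in $D^1\Der^1\LL(\x_0,\x_1)$, so $\rho'$ applies to all three.

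Next I would apply $\rho'$ and invoke $(\ref{alphabracketeqn})$ to get
$$\rho'\big(\delta_{\delta_{\x_1}(w)}\big) = \big\{\rho'(\delta_{\x_1}),\, \rho'(\delta_w)\big\}\ .$$
By the definition $(\ref{rhoprimedefn})$ of $\rho'$, and using that $w$ and $\delta_{\x_1}(w)$ are $D$-homogeneous of degrees $r$ and $r+1$, the left-hand side equals $\rho^{r+1}(\delta_{\x_1}(w))\,(y_0-y_{r+1})^{-1}$, while $\rho'(\delta_{\x_1}) = (y_0-y_1)^{-1}$ and $\rho'(\delta_w) = \widetilde{f}\,(y_0-y_r)^{-1}$, where $\widetilde{f}=\rho^r(w)$ is the translation-invariant full representative of $w$ whose reduction is $f$. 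This yields a closed expression for $\rho^{r+1}(\delta_{\x_1}(w))$ as $(y_0-y_{r+1})$ times an Ihara bracket of two rational functions.

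Finally I would pass to the reduced representation by setting $y_0=0$. All functions in sight are translation-invariant — $\widetilde{f}$ by $\cite{BrDepth}$, Lemma 6.2, the differences $y_0-y_r$ and $y_0-y_{r+1}$ trivially, and the Ihara bracket of translation-invariant functions again by Proposition $\ref{propAction1}$ — so reduction is well-defined and, by the conventions of \S$\ref{sectIharaonpoly}$ (cf. Example $\ref{exampleofIharaactionlength2}$), intertwines $\{\,,\,\}$ with the Ihara bracket on reduced rational functions. Under $y_0\mapsto 0$ we have $(y_0-y_1)^{-1}\mapsto -x_1^{-1}$, $\ \widetilde{f}\,(y_0-y_r)^{-1}\mapsto -f\,x_r^{-1}$, and $(y_0-y_{r+1})^{-1}\mapsto -x_{r+1}^{-1}$. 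Substituting, the three minus signs combine, and the antisymmetry $\{x_1^{-1}, f x_r^{-1}\} = -\{f x_r^{-1}, x_1^{-1}\}$ gives $\overline{\rho}^{\,(r+1)}(\delta_{\x_1} w) = \{f x_r^{-1}, x_1^{-1}\}\,x_{r+1}$, as claimed.

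The step I expect to be the main obstacle — or at any rate the one demanding most care — is the last one: justifying rigorously that reduction $y_0\mapsto 0$ commutes with the Ihara bracket and correctly absorbs the prefactor $(y_0-y_{r+1})^{-1}$, together with the sign bookkeeping. A useful independent check, which I would include, is to compute the full $y$-representative directly from $(\ref{odotdefn})$ and $(\ref{circformula})$: one finds $\rho^{r+1}(\delta_{\x_1} w) = (y_0-y_1)^{-1}\odot \widetilde{f} = \sum_{j=0}^{r} \big(\widetilde{f}_{\,j+1}-\widetilde{f}_{\,j}\big)\big/(y_j-y_{j+1})$, where $\widetilde{f}_{\,k}$ denotes $\widetilde{f}$ with the variable $y_k$ deleted. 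This divided-difference form must agree with the bracket expression derived above, confirming the computation before reduction.
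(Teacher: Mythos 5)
Your proposal is correct and follows essentially the same route as the paper's own proof: identify $[\delta_{\x_1},\delta_w]=\delta_{\delta_{\x_1}w}$, apply the Lie algebra homomorphism property $(\ref{alphabracketeqn})$ of $\rho'$, and pass to the reduced representation $y_0\mapsto 0$. The extra care you take with the sign bookkeeping and the divided-difference cross-check are sound but not needed beyond what the paper already records.
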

\begin{proof} 
Use $(\ref{deltadefn})$ and $(\ref{rhoprimedefn})$. Apply $(\ref{alphabracketeqn})$ to   $[ \delta_{\x_1}, \delta_w] = \delta_{\delta_{\x_1}w} $ to give  $$\rho' (\delta_{\delta_{\x_1}w }) = \{ \rho' (\delta_{\x_1}), \rho' (\delta_w) \}\ .$$
The left-hand side is $\rho^{(r+1)}(\delta_{\x_1}w) \over (y_0-y_{r+1})$,  the right-hand side is $\{{1 \over y_0-y_1}, {\rho^{(r)}(w) \over y_0-y_r}\}$. 
Then pass to the reduced representation  $(y_0,y_1, \ldots, y_r)\mapsto (0,x_1,\ldots, x_r)$.
\end{proof}
\subsection{Derivations on $\LL(a,b)$ and  power series}  \label{sectLabalgtopoly}
Define
$$D^{\Theta} = \Der^{\Theta}(\LL(a,[a,b]) , B^1 \LL(a,b))$$
to be the vector space of linear maps $\delta: \LL(a,[a,b]) \rightarrow B^1\LL(a,b)$ satisfying 
$\delta[p,q]=[\delta(p),q]+[p,\delta(q)]$ and $\delta([a,b])=0$. Such a $\delta \in D^{\Theta}$ is uniquely determined by the element 
$\delta(a)\in B^1 \LL(a,b)$.
Furthermore, there is an injective map
$$ B^1 \Der^{\Theta}(\LL(a,b)) \To D^{\Theta}\  $$
obtained by restricting to the Lie subalgebra $\LL(a,[a,b]) \subset \LL(a,b)$.

 Denote also by $\rho$ the linear map
\begin{eqnarray} \label{rhosecond}
\rho^{(r)} \quad : \quad \gr^r_B\,  T(a,b)   & \overset{\sim}{\To} & \Q[y_0,\ldots, y_r] \\
a^{i_0} b a^{i_1} \ldots  b a^{i_r}  & \mapsto & y_0^{i_0} \ldots y_r^{i_r}\  . \nonumber 
\end{eqnarray} 
Let  $\ell_0=1$ and for $r \geq 1$,  set 
 \begin{equation} \label{ellrdefn} 
 \ell_r = (y_0-y_1)(y_1-y_2) \ldots (y_{r-1}-y_{r})\ .
 \end{equation}   Define a  graded vector space 
$$ Q= \bigoplus_{r\geq 0}  \ell_r^{-1}  \Q[y_0, \ldots, y_r]  $$
and a linear map
\begin{eqnarray}
\label{abtorat}
\ell:   \LL(a,b)   & \To &  Q  \\
  w & \mapsto &  \sum_{r} \ell_r^{-1} \rho^{(r)}(w)  \nonumber
\end{eqnarray}
Since $\ell_r \cdot  \ell_s = \ell_{r+s}$, $Q$ is an algebra for the shuffle concatenation $(\ref{cdotdefn})$.
Define 
$$Q' = \bigoplus_{r\geq 1} Q_{r} {1 \over y_0-y_r}$$
and setting $c_r = \ell_r(y_0-y_r)$ ($c$ for `cyclic') consider the linear map 
\begin{eqnarray}  \label{ellprimedefn}
\ell': D^{\Theta}  & \To&  Q'  \\
\delta & \mapsto &      \sum_{r\geq 1} c_r^{-1} \rho^{(r)} \delta(a) \ .   \nonumber 
\end{eqnarray} 
It is injective, since $\delta(a)$ uniquely determines $\delta \in  D^{\Theta}$.
 \begin{prop} \label{propAction2} The following diagram commutes:
$$ 
\begin{array}{cccccc}
  &  D^{\Theta}  &  \times& \LL(a,[a,b])  & \To    & \LL(a,b)  \\
 &\downarrow_{\ell'}  &&\downarrow_{\ell} &   &  \downarrow_{\ell}  \\
\eact :  &    Q'   &\times& Q  &  \To   & Q    
\end{array}
$$
where, as in     $(\ref{odotdefn})$, we have
\begin{equation} \label{eactdefn}
f\eact g = f\circb g - f\cdot g \  . 
\end{equation} 
Restricting to the subspace $ B^1 \Der^{\Theta} \LL(a,b)\subset D^{\Theta}$ we obtain a commutative diagram
$$ 
\begin{array}{cccccc}
  &  B^1 \Der^{\Theta} \LL(a,b) &  \times& \LL(a,b)  & \To    & \LL(a,b)  \\
 &\downarrow_{\ell'}  &&\downarrow_{\ell} &   &  \downarrow_{\ell}  \\
\eact :  &    Q'   &\times& Q  &  \To   & Q    
\end{array}
$$
Furthermore, we have the identity for all $\delta_1, \delta_2 \in B^1 \Der^{\Theta} \LL(a,b)$:
\begin{equation} \label{betabracketeqn}
\ell'  ([ \delta_1, \delta_2] )=  \{\ell'(\delta_1), \ell'(\delta_2)\} \ .
\end{equation} 
Thus $\ell': B^1 \Der^{\Theta} \LL(a,b) \rightarrow (Q', \{ , \})$ is a Lie algebra homomorphism.
 \end{prop}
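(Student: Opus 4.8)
The plan is to mimic the proof of Proposition \ref{propAction1} essentially verbatim, with $\ell,\ell'$ playing the roles of $\rho,\rho'$, while carefully tracking the denominators $\ell_r$ of $(\ref{ellrdefn})$ and $c_r=\ell_r(y_0-y_r)$. Everything rests on two structural facts. First, since the formula $(\ref{eactdefn})$ for $\eact$ is identical to $(\ref{odotdefn})$, the distributivity law $(\ref{Distrib})$ shows that for fixed $f$ the map $g\mapsto f\eact g$ is a derivation of $(Q,\cdot)$. Second, because $\ell_r\cdot\ell_s=\ell_{r+s}$, the map $\ell$, extended to all of $T(a,b)$ via $(\ref{rhosecond})$, is a homomorphism $(T(a,b),\cdot)\to(Q,\cdot)$. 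Writing $\Phi(w)=\ell(\delta(w))$ and $\Psi(w)=\ell'(\delta)\eact\ell(w)$, and using that $\delta$ extends to an associative derivation, both $\Phi$ and $\Psi$ satisfy $F(w\cdot w')=F(w)\cdot\ell(w')+\ell(w)\cdot F(w')$; hence it suffices to check $\Phi=\Psi$ on a set of $\cdot$-generators.

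For the first diagram I would use that $U(\LL(a,[a,b]))$ is $\cdot$-generated by $a$ and $\Theta=[a,b]$, together with the direct computations $\ell(a)=y_0$ and $\ell(\Theta)=1$. Exactly as in Proposition \ref{propAction1} one has $f\circb y_0=y_0 f$ and $f\cdot y_0=y_r f$, so $\ell'(\delta)\eact y_0=(y_0-y_r)\,\ell'(\delta)$; multiplying the $r$-th component $c_r^{-1}\rho^{(r)}\delta(a)$ by $(y_0-y_r)$ produces $\ell_r^{-1}\rho^{(r)}\delta(a)$, which sums to $\ell(\delta(a))$. This is precisely the step forcing the normalisation $c_r=\ell_r(y_0-y_r)$ of $(\ref{ellprimedefn})$ to be compatible with the normalisation $\ell_r^{-1}$ of $(\ref{abtorat})$.

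The remaining generator identity $\ell'(\delta)\eact 1=\ell(\delta(\Theta))=0$ is the crux and the genuinely new point beyond Proposition \ref{propAction1}. Expanding $\eact=\circb-\cdot$ against the constant $1\in Q_1$ through $(\ref{circformula})$ collapses to
\[
f\eact 1 = f(y_1,\ldots,y_{r+1}) + (-1)^{\deg f+r}\,f(y_{r+1},\ldots,y_1),
\]
so I must show that $f=\ell'(\delta)$ obeys the antisymmetry $f(y_0,\ldots,y_r)+(-1)^{\deg f+r}f(y_r,\ldots,y_0)=0$. Since $\delta(a)\in\LL(a,b)$ is a Lie element we have $\delta(a)^{*}=-\delta(a)$, which under $\rho^{(r)}$ yields exactly this antisymmetry for $\rho^{(r)}\delta(a)$; the main obstacle is the sign-and-denominator bookkeeping needed to see that it survives division by $c_r$. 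Here one uses $\ell_r(y_r,\ldots,y_0)=(-1)^r\ell_r$ and $(y_r-y_0)=-(y_0-y_r)$, so $c_r$ reverses to $(-1)^{r+1}c_r$, while the degree shift $\deg(\rho^{(r)}\delta(a))=\deg f+r+1$ makes the signs cancel. Verifying this carefully is the fiddly heart of the argument.

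The second diagram follows from the first: now $\delta$ is a genuine derivation of $\LL(a,b)$, extending to $T(a,b)$, which is $\cdot$-generated by $a$ and $b$. The check on $a$ is already done, and for $b$ I would avoid computing $\ell(\delta(b))$ by writing $\Theta=a\cdot b-b\cdot a$. Applying the $\cdot$-derivation $\Phi-\Psi$ and using $\Phi(\Theta)=\Psi(\Theta)=0$ from the first diagram together with $\Phi(a)=\Psi(a)$, one gets $\ell(a)\cdot E-E\cdot\ell(a)=0$ for $E=\Phi(b)-\Psi(b)$, i.e. $(y_0-y_r)E=0$; since $\delta(b)\in B^1\LL(a,b)$ and $\ell'(\delta)\eact\ell(b)$ have no $Q_0$-component, this forces $E=0$. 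Finally, for $(\ref{betabracketeqn})$ I would argue as in Proposition \ref{propAction1}: the space $B^1\Der^{\Theta}\LL(a,b)$ is closed under the bracket (as $[B^1,B^1]\subset B^2$), acts faithfully through $\delta\mapsto\delta(a)$, and satisfies the identity $f\eact(g\eact h)-g\eact(f\eact h)=\{f,g\}\eact h$ coming from $(\ref{eactdefn})$, $(\ref{Distrib})$ and $(\ref{Assoc})$; evaluating the composite action at $h=\ell(a)=y_0$ then gives $\ell'([\delta_1,\delta_2])=\{\ell'(\delta_1),\ell'(\delta_2)\}$.
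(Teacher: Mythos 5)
Your proposal is correct and follows essentially the same route as the paper: reduce via the derivation property of $\eact$ and the multiplicativity of $\ell$ (from $\ell_r\cdot\ell_s=\ell_{r+s}$) to the two generator checks $\ell'(\delta)\eact\ell(a)=\ell(\delta(a))$ and $\ell'(\delta)\eact\ell([a,b])=0$, the latter resting on the antisymmetry of $\rho^{(r)}\delta(a)$ coming from $\delta(a)$ being a Lie element. The paper states this more tersely (declaring the computation ``formally identical'' to Proposition \ref{propAction1}), whereas you have verified the sign bookkeeping through the denominator $c_r$ and spelled out the restriction to $B^1\,\Der^{\Theta}\LL(a,b)$ and the bracket identity; these details all check out.
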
 

\begin{proof} The proof is similar to the proof of proposition $\ref{propAction1}$, except that we must check 
that $\ell'(\delta) \eact \ell([a,b])= 0$ and $\ell'(\delta) \eact \ell(a)= \ell(\delta(a)).$ But $\ell([a,b])= {y_0-y_1 \over y_0-y_1}=1$ and $\ell(a)= y_0$, so this 
calculation is formally identical to the one in proposition $\ref{propAction1}$.
\end{proof}

\subsection{Double shuffle equations}The equations defining $\dmr_0$ can be spelt out explicitly and translated  via $(\ref{algtopoly})$ into the language of commutative power series \cite{AA}. 
 We shall only require their restriction 
to depths $\leq 3$ and work with translation-invariant representations $(\ref{transmap})$.  Let 
$$  (f^{(1)}, f^{(2)}, f^{(3)}) \quad \in \quad  \Q[x_1]\oplus \Q[x_1,x_2] \oplus \Q[x_1,x_2,x_3]$$
Writing $x_{ij}$ for $x_i+x_j$ and $x_{ijk}$ for $x_i+x_j+x_k$, the shuffle equations modulo products in depths $2$ and $3$ are 
given by 
\begin{eqnarray} \label{shuffle}
f^{(2)}(x_1,x_{12}) + f^{(2)}(x_2,x_{12}) & = & 0 \\ 
f^{(3)}(x_1, x_{12}, x_{123})+f^{(3)}(x_2, x_{12}, x_{123})+f^{(3)}(x_2, x_{23}, x_{123}) & = & 0\ .  \nonumber \
\end{eqnarray}
The solutions to the shuffle equations modulo products correspond, via $(\ref{rhodef})$, to  the image of  $\LL(\x_0,\x_1) $ inside $T(\x_0,\x_1)$. The stuffle equations modulo products,  in depths $2$ and $3$, correspond to the (regularised versions of) the equations ($a,b,c\in \N$):
$$\zeta(a, b) + \zeta(b,a) + \zeta(a+b) \equiv 0 \pmod{\hbox{products}}$$
$$\zeta(a, b,c) + \zeta(b,a,c) + \zeta(b,c,a) +\zeta(a+b,c)+ \zeta(a,b+c) \equiv 0 \pmod{\hbox{products}}\ .$$
By considering the  series $Z^{(r)}= \sum_{n_1,\ldots, n_r\geq 0} \zeta_{*}(n_1,\ldots, n_r) x_1^{n_1-1} \ldots x_r^{n_r-1}$, 
where the subscript $*$ denotes the stuffle regularisation, the previous equations translate into 
\begin{equation}  \label{stuffle}
f^{(2)}(x_1, x_2)+f^{(2)}(x_2, x_1) \quad  = \quad    { f^{(1)}(x_1) - f^{(1)}(x_2) \over  x_2-x_1 }   
\end{equation}
\begin{multline}
f^{(3)}(x_1, x_2, x_3)+f^{(3)}(x_2, x_1, x_3)+f^{(3)}(x_2, x_3, x_1) \quad   =   \quad   \nonumber \\
   { f^{(2)}(x_2, x_1)-f^{(2)}(x_2,x_3)  \over  x_3-x_1}       +  { f^{(2)}(x_1, x_3)-f^{(2)}(x_2,x_3) \over  x_2-x_1}    \nonumber
\end{multline}
Note that the right-hand sides of the equations are in fact polynomials. These equations extend to an infinite family of equations
in every depth \cite{AA}. The Lie algebra $\dmr_0$ is defined to be the sets of solutions to both shuffle and stuffle equations.

The linearised double shuffle equations  \cite{BrDepth} are the same sets of equations in which the right-hand sides are zero. The linearised shuffle equations are identical to the ordinary shuffle equations, but the linearised stuffle equations are:
\begin{eqnarray}  \label{linstuffle}
f^{(1)}(x_1)+f^{(1)}(-x_1)\quad = \quad 0 \\ 
f^{(2)}(x_1, x_2)+f^{(2)}(x_2, x_1) \quad  = \quad   0   \nonumber \\
f^{(3)}(x_1, x_2, x_3)+f^{(3)}(x_2, x_1, x_3)+f^{(3)}(x_2, x_3, x_1) \quad   =    \quad0   \nonumber
\end{eqnarray}
 The linearised double shuffle equations are also closed under the Ihara bracket, by a (simpler) version of Racinet's theorem.

\subsection{Geometric derivations and linearised double shuffle equations with poles.} 
The Lie algebra $\ue \subset B^1 \Der^{\Theta} \LL(a,b)$  of geometric derivations was defined in $(\ref{uedefn})$.
The definition of the linearised double shuffle equations $(\ref{shuffle})$ and $(\ref{linstuffle})$ can be extended in the obvious way to rational functions. 

\begin{defn} \label{defnpls} Define $\mathfrak{pls} \subset Q'$  to be the subspace of $Q'$ satisfying the linearised double shuffle equations.  It is bigraded by weight and depth.
\end{defn} 
By a version of Racinet's theorem,  $\mathfrak{pls}$ is also a Lie subalgebra of $Q'$ for the linearised Ihara bracket $\{ \ , \}$. The notation stands for `polar linearised double shuffle' solutions. It is a  bigraded Lie algebra in the category of $\mathfrak{sl}_2$-representations over $\Q$.

\begin{prop}  \label{propuedbshf} The geometric derivations, in their reduced rational function representation $(\ref{ellprimedefn})$, satisfy the linearised double shuffle equations:
 $$\overline{\ell'}(\ue) \subset \mathfrak{pls} \ . $$
\end{prop}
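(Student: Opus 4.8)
The plan is to use that $\ue$ is \emph{generated} as a Lie algebra by the derivations $\varepsilon^{\vee}_{2n+2}$ for $n \geq -1$, together with two facts already at our disposal: that $\ell'$ (hence $\overline{\ell'}$) intertwines the bracket of derivations with the Ihara bracket (Proposition~$\ref{propAction2}$, equation~$(\ref{betabracketeqn})$), and that $\mathfrak{pls}$ is a Lie subalgebra of $(Q',\{\,,\,\})$ by a version of Racinet's theorem. Since a Lie subalgebra which contains a generating set of a Lie algebra contains the image of that algebra under any homomorphism, it will suffice to prove that each generator satisfies $\overline{\ell'}(\varepsilon^{\vee}_{2n+2}) \in \mathfrak{pls}$. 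Two preliminary observations make this reduction clean. First, $\ell'$ takes values in $Q'$ by construction, so every element of $\overline{\ell'}(\ue)$ automatically carries poles at worst of the admissible form $\bigl(x_1(x_1-x_2)\cdots(x_{r-1}-x_r)x_r\bigr)^{-1}$; hence membership in $\mathfrak{pls}$ is \emph{only} a matter of the linearised double shuffle equations. Second, the generators map to translation-invariant functions and translation invariance is preserved by $\{\,,\,\}$, so all of $\overline{\ell'}(\ue)$ is translation invariant, the reduced representation~$(\ref{transmap})$ is faithful on it, and $\overline{\ell'}\colon \ue \to (Q',\{\,,\,\})$ is a genuine Lie algebra homomorphism whose image is the subalgebra generated by the $\overline{\ell'}(\varepsilon^{\vee}_{2n+2})$.

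Next I would compute these images explicitly. Because $\varepsilon^{\vee}_{2n+2}(a) = \ad(a)^{2n+2}(b)$ is homogeneous of $B$-degree one, only the depth-one term of $(\ref{ellprimedefn})$ contributes; from $(\ref{rhosecond})$ one has $\rho^{(1)}(\ad(a)^{2n+2}(b)) = (y_0-y_1)^{2n+2}$, and dividing by $c_1 = \ell_1(y_0-y_1) = (y_0-y_1)^2$ and reducing via $y_0 \mapsto 0$, $y_1 \mapsto x_1$ gives
\[
\overline{\ell'}(\varepsilon^{\vee}_{2n+2}) = x_1^{2n} \quad (n \geq 0), \qquad \overline{\ell'}(\varepsilon^{\vee}_{0}) = x_1^{-2}.
\]
In particular every generator is supported purely in depth one.

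It remains to check that these depth-one functions solve the linearised double shuffle equations, and this is where the reduction pays off. A function supported in depth one has vanishing components in all depths $\geq 2$, so the shuffle relations~$(\ref{shuffle})$ and the depth $\geq 2$ linearised stuffle relations in~$(\ref{linstuffle})$ hold trivially; the only surviving condition is the depth-one parity relation. For $n \geq 1$ this is immediate, since $x_1^{2n}$ is exactly the depth-one component $\sigma^{(1)}_{2n+1}$ of a genuine double shuffle solution and hence already lies in $\ls$; the remaining generators $x_1^{0}$ and $x_1^{-2}$ are even in $x_1$, and a direct substitution shows they satisfy the depth-one linearised stuffle relation, which is the parity dictated in depth one (compare Corollary~$\ref{corDepthParity}$). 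Thus every generator lies in $\mathfrak{pls}$, and, $\mathfrak{pls}$ being a Lie subalgebra, so does the subalgebra $\overline{\ell'}(\ue)$ that they generate.

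The step I expect to require the most care is the reduction itself: one must verify that $\overline{\ell'}$ is a homomorphism for the Ihara bracket on the \emph{reduced} representation, and not merely on $Q'$ before passing to translation-invariant coordinates. This is why the translation invariance of $\ell'(\ue)$ must be established first; once it is, the entire verification collapses to depth one, where the shuffle relations are empty and only a parity condition survives, so no genuinely new computation in higher depth is needed.
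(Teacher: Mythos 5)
Your proposal is correct and follows essentially the same route as the paper: compute that the generators $\varepsilon^{\vee}_{2n+2}$ map under $\overline{\ell'}$ to the even depth-one functions $x_1^{2n}$ (for $n\geq -1$), observe that these satisfy the linearised double shuffle equations, and conclude via the fact that $\overline{\ell'}$ is a Lie algebra homomorphism $(\ref{betabracketeqn})$ and that $\mathfrak{pls}$ is closed under the Ihara bracket. The extra details you supply (the pole structure being automatic from membership in $Q'$, the vacuity of the higher-depth equations for depth-one elements, and the translation-invariance bookkeeping) are correct elaborations of the same argument.
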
 
\begin{proof} The images of the generators $\overline{\ell'}( \varepsilon^{\vee}_{2n+2}) = x_1^{2n}$ by $(\ref{rhoofadx0x1})$ for $n\geq -1$.  They  are even and hence  solutions to the linearised double shuffle
equations. It follows from  $(\ref{betabracketeqn})$, that $\overline{\ell'}$ is a morphism of Lie algebras since $\mathfrak{pls}$ is closed under $\{ \ , \ \}$.
\end{proof}

 A natural question to  ask is whether
$\mathfrak{pls} = \ue$. It is  true in depths $\leq 3$  and in certain limits \cite{AA}.

The previous proposition implies that the stuffle equations define maps from the space of non-geometric derivations
$$\Der^{\Theta}\, \LL(a,b) / \ue $$
to spaces of rational functions.  We shall show in remark \ref{remz_3} that this map is non-zero and  provides a tool to 
prove that certain derivations are not geometric. This can in particular be applied to the  image of  $i_1(\g)$ in $\Der^{\Theta}\, \LL(a,b) / \ue $.

\section{Zeta elements in depth 3 via anatomical construction} \label{sectDrin3}
We wish to write down elements 
$$\sigma^c_{2n+1} \in  D^1 \LL(\x_0, \x_1) / D^4 \LL(\x_0,\x_1)$$
by exhibiting explicit polynomials
$$(\sigma_{2n+1}^{(1)}, \sigma_{2n+1}^{(2)},\sigma_{2n+1}^{(3)}) \in  \Q[x_1] \oplus \Q[x_1,x_2] \oplus \Q[x_1,x_2,x_3]$$
which are solutions to the equations $(\ref{shuffle})$ and $(\ref{stuffle})$.

\subsection{Polar solutions}  \label{sectChis}The shape of the equations $(\ref{stuffle})$ suggests searching for solutions amongst the space of rational functions in $x_i$ with $\Q$-coefficients. 
Let 
\begin{equation} \label{sdef}
s^{(1)} = {1 \over  2 \, x_1} \qquad \hbox{ and } \qquad s^{(2)} ={1\over 12} \Big(  {1 \over  x_1x_2 } + {1 \over  x_2(x_1-x_2)}\Big)\ .
\end{equation}
It is easy to verify that $(s^{(1)}, s^{(2)})$ is a solution to the  double shuffle equations $(\ref{shuffle})$ and $(\ref{stuffle})$ in depths one and two. 

\begin{defn} For $n\geq -1$, define rational functions in $x_1,x_2, x_3$ by
\begin{eqnarray}
\xi_{2n+1}^{(1)} & = &  x_1^{2n}  \\ 
\xi_{2n+1}^{(2)}  & =  &  \{  s^{(1)} , x_1^{2n}\} \nonumber \\ 
\xi_{2n+1}^{(3)} &  =  &  \{s^{(2)}, x_1^{2n}\}  + {1 \over 2 } \{ s^{(1)}, \{ s^{(1)} , x_1^{2n}\}\}   \nonumber 
\end{eqnarray} 
where curly brackets denote the linearised Ihara bracket. Explicitly, we have
\begin{equation} \label{xiindepth2}
\xi_{2n+1}^{(2)} = {x^{2n}_{2} -(x_{2}-x_1)^{2n} \over x_{1}}+{x^{2n}_{1} - x^{2n}_{2} \over x_{2}-x_{1}}+{ (x_{2}-x_{1})^{2n}  -x^{2n}_{1}\over x_{2}}   
\end{equation} 
which defines a polynomial in $\Q[x_1,x_2]$ whenever $n\geq 0$. On the other hand, 
$\xi_{2n+1}^{(3)} $ is a rational function in $x_1,x_2,x_3$ with non-trivial poles. 
When $n \geq 0$ it has at most simple poles along
$x_1=0, x_3=0, x_1=x_2$ and $x_2= x_3 $. 

One checks that the case $n=0$ is trivial: $\xi_1^{(1)}=1$ and $\xi_1^{(2)}=\xi_1^{(3)}=0$.
\end{defn}

\begin{prop} Let $n\geq -1$. The elements $$\xi_{2n+1}= (\xi^{(1)}_{2n+1}, \xi^{(2)}_{2n+1}, \xi^{(3)}_{2n+1})$$ satisfy the double shuffle equations modulo products $(\ref{shuffle})$ and $(\ref{stuffle})$ in depths $2,3$.
\end{prop}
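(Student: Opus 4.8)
The plan is to treat the shuffle equations $(\ref{shuffle})$ and the stuffle equations $(\ref{stuffle})$ separately, exploiting the fact that the three components are built by iterating the Ihara bracket against the fixed element $s=(s^{(1)},s^{(2)})$. Collecting the defining formulas by total depth exhibits $\xi_{2n+1}$ as the truncation to depths $\leq 3$ of $\exp(\{s,\,\cdot\,\})(x_1^{2n})$, where $s^{(1)}$ has depth $1$ and $s^{(2)}$ has depth $2$; the coefficient $\tfrac12$ in $\xi^{(3)}_{2n+1}$ is exactly the second exponential coefficient. This observation organises both verifications.

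For the shuffle equations I would argue that the subspace of rational functions satisfying $(\ref{shuffle})$ is closed under the linearised Ihara bracket. This is essentially the content of Proposition $\ref{propAction1}$: the identity $f\odot(g\odot h)-g\odot(f\odot h)=\{f,g\}\odot h$ together with the distributivity $(\ref{Distrib})$ are purely formal, and remain valid after extension to rational functions, so $\{\,,\,\}$ maps shuffle-type (Lie-like) elements to shuffle-type elements. Since $x_1^{2n}=\xi^{(1)}_{2n+1}$ is a genuine Lie element and $(s^{(1)},s^{(2)})$ satisfies $(\ref{shuffle})$ in depths $\leq 2$ by hypothesis, each of $\{s^{(1)},x_1^{2n}\}$, $\{s^{(2)},x_1^{2n}\}$ and $\{s^{(1)},\{s^{(1)},x_1^{2n}\}\}$ is again a shuffle solution; hence so are $\xi^{(2)}_{2n+1}$ and $\xi^{(3)}_{2n+1}$.

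For the stuffle equations I would proceed by direct computation, depth by depth. In depth $2$ one substitutes the explicit formula $(\ref{xiindepth2})$ (equivalently, Example $\ref{exampleofIharaactionlength2}$ with $\overline f=\tfrac1{2x_1}$ and $\overline g=x^{2n}$) into the left-hand side of the first equation in $(\ref{stuffle})$: the terms with denominators $x_1$ and $x_2$ cancel under $x_1\leftrightarrow x_2$, and the surviving middle terms sum to $\frac{x_1^{2n}-x_2^{2n}}{x_2-x_1}$, which is precisely the right-hand side $\frac{f^{(1)}(x_1)-f^{(1)}(x_2)}{x_2-x_1}$. The normalisation constant $\tfrac12$ in $s^{(1)}$ is exactly what makes this balance. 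The depth-$3$ identity is the substantial step: here I would expand $\xi^{(3)}_{2n+1}=\{s^{(2)},x_1^{2n}\}+\tfrac12\{s^{(1)},\{s^{(1)},x_1^{2n}\}\}$ using the explicit Ihara-action formula $(\ref{circformula})$ in the relevant bidepths $(2,1)$ and $(1,2)$, form the symmetrisation $\xi^{(3)}(x_1,x_2,x_3)+\xi^{(3)}(x_2,x_1,x_3)+\xi^{(3)}(x_2,x_3,x_1)$ on the left, and match it against the right-hand side of the second equation in $(\ref{stuffle})$, in which $f^{(2)}=\xi^{(2)}_{2n+1}$ is already known explicitly.

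The main obstacle is this depth-$3$ stuffle verification: it is a genuine identity of rational functions with poles along $x_1=0$, $x_3=0$, $x_1=x_2$ and $x_2=x_3$, so one must track the pole structure carefully rather than comparing polynomial coefficients. The workable strategy is to feed the two lower-depth relations already in hand — the depth-$2$ stuffle equation for $(s^{(1)},s^{(2)})$ and the depth-$2$ stuffle equation for $\xi^{(2)}_{2n+1}$ just established — back into the computation, using them to collapse the symmetrised double bracket $\{s^{(1)},\{s^{(1)},x_1^{2n}\}\}$ so that it combines with the $\{s^{(2)},x_1^{2n}\}$ contribution to produce exactly the prescribed right-hand side. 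Alternatively, one may invoke the polar extension of Racinet's theorem (Theorem $\ref{thmRacinet}$) asserting that the Ihara bracket preserves solutions of the full double shuffle equations even in the presence of poles, which makes $\xi_{2n+1}=\exp(\{s,\,\cdot\,\})(x_1^{2n})$ a solution automatically, as a sum of iterated brackets of the solutions $s$ and $x_1^{2n}$.
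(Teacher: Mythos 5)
Your proposal is correct and follows essentially the same route as the paper, whose proof consists precisely of the two alternatives you offer: a finite direct computation using only the evenness of $x_1^{2n}$, or an appeal to the rational-function version of Racinet's theorem together with the fact that $(s^{(1)},s^{(2)})$ and $x_1^{2n}$ satisfy the double shuffle equations, so that the iterated Ihara brackets defining $\xi_{2n+1}$ automatically do as well. You simply spell out the computational branch in more detail than the paper does.
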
 
\begin{proof} This is a finite computation and only uses the fact that $x_1^{2n}$ is an even function. It also follows from the fact
that $(s^{(1)}, s^{(2)})$, and $x_1^{2n}$  are solutions to the double shuffle equations via  a version of Racinet's theorem for rational functions.
\end{proof} 
\begin{rem}  \label{rem extend} The elements $\xi_{2n+1}$ can be extended to all higher depths by the equation 
$$ \xi_{2n+1} = \exp(\ad(s)) x_1^{2n}$$ to define 
solutions to the full set of double shuffle equations with poles, where $s$ is one of (many possible) solutions to  the polar double shuffle equations in weight $0$. This   is discussed in \cite{AA}, and implies the  previous proposition.
The component of $s$ in depth 3 is unique by an extension of the  depth-parity theorem for double shuffle equations (theorem $\ref{Depth-Paritythm}$) to the case of rational functions and  given by 
$s^{(3)} = {1 \over 2} \{s^{(1)}, s^{(2)}\}\ .$
 \end{rem}

\subsection{Definition of canonical elements}
It is convenient to define heretical normalisations of the elements $\xi$  as follows. Let
\begin{equation}  \underline{\xi}_{-1}      =     {1 \over 12}   \,\xi_{-1}  \qquad \hbox{ and } \qquad 
 \underline{\xi}_{2n+1}     =     {B_{2n} \over (2n)!} \,  \xi_{2n+1}  \quad \hbox{for }  n\geq 0 
 \end{equation} 
where $B_{2n}$ is the $2n^{\mathrm{th}}$ Bernoulli number.
 Set 
\begin{equation} \label{bdef}
b(x) = {1 \over e^x-1}+ {1 \over 2} 
\end{equation} 
Recall   the well-known functional identity
\begin{equation} \label{bfuncid} 
b(x_1)b(x_2) - b(x_1)b(x_2-x_1) +b(x_2) b(x_2-x_1)={1 \over 4} \ .
\end{equation}

\begin{defn} \label{defnsigmac} Let $n\geq 2$. 
Define elements $  \underline{\sigma}^c_{2n+1} \in \LL(\x_0,\x_1) / D^4 \LL(\x_0,\x_1)$
by
\begin{equation} \label{rhosimacdef} 
 \rho( \underline{\sigma}^c_{2n+1}) =   \underline{\xi}_{2n+1}  + \sum_{a+b = n}  {1 \over 2  b} \{   \underline{\xi}_{2a+1}, \{  \underline{\xi}_{2b+1},  \underline{\xi}_{-1}\}\}  \pmod{D^4}
 \end{equation}
where the sum is over $a,b\geq 1$.  Definition $(\ref{rhosimacdef})$ makes sense, since we shall prove in the next paragraph that the right-hand side    has no poles. 
Define
$$  \sigma_{2n+1}^{c} =          { (2n)!  \over B_{2n} }\underline{\sigma}^c_{2n+1} \qquad \hbox{ for } n \geq 2  $$
to be the canonical normalisations, 
and set $\sigma_{3}^{c} = [\x_0,[\x_0,\x_1]] + [\x_1,[\x_0,\x_1]]$.

\end{defn}

 Since  by Racinet's theorem, the space of solutions
to the double shuffle equations is closed under the Ihara bracket, this means that for $n\geq 2$
 the elements $\sigma^c_{2n+1}$ 
are solutions to the double shuffle equations in depths $\leq 3$.
\begin{rem} The  $\sigma_{2n+1}^c$ have the canonical  normalisation. For  $n\geq 1$,
$$(\sigma^c_{2n+1})^{(1)} = \xi^{(1)}_{2n+1}= x_1^{2n} \qquad \hbox{ and } \qquad (\sigma^c_{2n+1})^{(2)} =\xi^{(2)}_{2n+1}$$ is given by  $(\ref{xiindepth2})$. We have $(\sigma^c_3)^{(3)}=0$ and for $n\geq 2$,
\begin{equation} \label{sigmacexpl} 
 (\sigma^c_{2n+1})^{(3)} =   \xi_{2n+1}^{(3)} +     \sum_{a+b = n}  {B_{2a} B_{2b} \over B_{2n}} \binom{2n}{2a} {1 \over 24\,   b} \{  x_1^{2a}, \{ x_1^{2b}, x_1^{-2} \}\} \ .
 \end{equation}  
Due to the obvious   symmetry in $a$ and $b$, the previous expression can also be written in terms of lowest weight vectors for the action \cite{AA} of $\mathfrak{sl}_2$, namely:
 $$   {1 \over 2  b} \{  x_1^{2a}, \{ x_1^{2b}, x_1^{-2} \}\}  +  {1 \over 2  a} \{  x_1^{2b}, \{ x_1^{2a}, x_1^{-2} \}\}\ .$$
 On  the other hand, compare the odd part of the period polynomial \cite{KZ} for the Eisenstein series of weight $2n$, which is
proportional to:
$$\sum_{a+b=n, a, b\geq 1} \binom{2n}{2a} B_{2a} B_{2b} X^{2a-1} Y^{2b-1} \quad \in \quad \Q[X,Y]\ . $$
This is no accident, and follows from the computations in \S\ref{sectPolescancel} as well as \S\ref{sectfinal}.
\end{rem}

\subsection{Cancellation of poles} \label{sectPolescancel}
We show that $(\ref{sigmacexpl})$ has no poles. We need the following notation.
Given two even functions $f,g$ of one variable, define a new function  $$ (f\star g) (x_1,x_2)= f(x_1) g(x_2)  -f(x_2 \! -\! x_1) g(x_2) + f(x_2 \! -\! x_1) g(x_1) - f(x_2)  g(x_1)\ .$$
In the notation of \cite{AA} it is 
$f \star g  = f  \circb g  - g \, \studot f$. 

\begin{lem} For all $n, a,b \geq 1$, 
\begin{eqnarray} \label{residues}
  \Res_{x_3=0} \big( \xi^{(3)}_{2n+1} \big)  &  =  &  {1 \over 12}   x_1^{2n} \star  x_1^{-1}   \\ 
 \Res_{x_3=0} \big( \{x_1^{2a}, \{x_1^{2b}, x_1^{-2} \} \} \big)   & = &     2b\,  x_1^{2a} \star x_1^{2b-1} \ . \nonumber
\end{eqnarray} 
\end{lem}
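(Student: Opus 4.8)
The plan is to treat both identities as finite residue computations, using the explicit formula $(\ref{circformula})$ for the linearised Ihara action together with the depth-two special case recorded in Example $\ref{exampleofIharaactionlength2}$. In each case the only source of a pole along $x_3=0$ is the single factor that is already singular at the origin (the function $x_1^{-1}$ coming from $s^{(1)},s^{(2)}$ in the first identity, and $x_1^{-2}$ in the second); every other ingredient is a polynomial. Taking $\Res_{x_3=0}$ therefore amounts to extracting the coefficient of $x_3^{-1}$ in the Laurent expansion of the reduced representation $(y_0,y_1,y_2,y_3)\mapsto(0,x_1,x_2,x_3)$, and this operation sends the depth-three expression to a function of $x_1,x_2$ alone, which is exactly the bidegree of $f\star g$. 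So the content is to identify which terms of the bracket are singular at $x_3=0$ and to evaluate their residues.

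For the second identity I would first compute the inner bracket $H:=\{x_1^{2b},x_1^{-2}\}$ explicitly in $\Q(x_1,x_2)$ from Example $\ref{exampleofIharaactionlength2}$; the double poles cancel, leaving at worst simple poles, and one checks directly that $\Res_{x_1=0}H=-2b\,x_2^{2b-1}$, the factor $2b$ coming from the first-order Taylor coefficient of $(x_2-x_1)^{2b}$ forced by the \emph{double} pole of $x_1^{-2}$. I would then expand the outer bracket $\{x_1^{2a},H\}=x_1^{2a}\circb H-H\circb x_1^{2a}$ via $(\ref{circformula})$ in the case $r=1,s=2$, retain only the summands in which the singular slot of $H$ is specialised to the variable $y_3$ (these are the only ones producing a pole at $x_3=0$), and read off the coefficient of $x_3^{-1}$. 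The derivative produced by the double pole is what converts $x_1^{-2}$ into $x_1^{2b-1}$ and supplies the prefactor $2b$; collecting the surviving terms and rewriting them in $x_1,x_2$ reproduces precisely the four monomials of $2b\,x_1^{2a}\star x_1^{2b-1}$.

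For the first identity I would apply the same procedure to the two summands of $\xi^{(3)}_{2n+1}=\{s^{(2)},x_1^{2n}\}+\tfrac12\{s^{(1)},\{s^{(1)},x_1^{2n}\}\}$. Here $s^{(1)}=\tfrac1{2x_1}$ and $s^{(2)}$ carry only simple poles, so no derivative terms arise and each residue is obtained by substituting the residue $\tfrac12$ of $s^{(1)}$, respectively the residues of $s^{(2)}$, into the relevant slot. The term $\{s^{(2)},x_1^{2n}\}$ contributes its coefficient $\tfrac1{12}$ directly, while the contribution of $\tfrac12\{s^{(1)},\{s^{(1)},x_1^{2n}\}\}$ must be reorganised; the functional identity $(\ref{bfuncid})$ for $b(x)$ (equivalently, the fact that $s^{(1)},s^{(2)}$ are the depth one and two parts of a single double shuffle solution $s$) is what lets the two pieces combine into the single clean expression $\tfrac1{12}\,x_1^{2n}\star x_1^{-1}$. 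Recognising the surviving depth-two residue as $f\star g=f\circb g-g\studot f$ is the final step in each case.

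The main obstacle is the combinatorial bookkeeping: each outer bracket produces a sizeable number of terms under $(\ref{circformula})$, and one must correctly isolate those singular at $x_3=0$ and, in the second identity, handle the first-order Taylor expansion forced by the double pole of $x_1^{-2}$ — this is the delicate point, since it is precisely this derivative that yields the integer factor $2b$ and effects the shift $x_1^{-2}\rightsquigarrow x_1^{2b-1}$. Once the residues are extracted, matching them against the four-term definition of $\star$ is routine.
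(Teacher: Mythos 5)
Your proposal is correct and takes essentially the same route as the paper, whose proof consists of the single remark that the lemma is a straightforward computation from the definitions: your identification of the terms of $(\ref{circformula})$ that are singular along $x_3=0$, and of the first-order Taylor expansion of $(x_2-x_1)^{2b}$ against the double pole of $x_1^{-2}$ as the source of the factor $2b$ and the shift to $x_1^{2b-1}$, is precisely the content of that computation (and your intermediate value $\Res_{x_1=0}\{x_1^{2b},x_1^{-2}\}=-2b\,x_2^{2b-1}$ checks out). The only small inaccuracy is the appeal to the functional identity $(\ref{bfuncid})$, which is not needed for this fixed-$n$ residue computation and enters only in the subsequent proposition, where one sums over $n$ with Bernoulli weights; this does not affect the argument.
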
 
\begin{proof}
This is a straightforward computation and follows from the definitions. See also \cite{AA} for a generalisation of the second equation.
\end{proof}

\begin{prop} The elements $(\sigma^c_{2n+1})^{(3)}$ have no poles, for all $n \geq 2$.
\end{prop}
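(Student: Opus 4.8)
The plan is to treat $(\sigma^c_{2n+1})^{(3)}$ as an \emph{a priori} rational function whose only possible poles are simple and lie along the four lines $x_1=0$, $x_3=0$, $x_1=x_2$ and $x_2=x_3$ (this is the pole structure inherited from $\xi^{(3)}_{2n+1}$ and from the triple brackets $\{x_1^{2a},\{x_1^{2b},x_1^{-2}\}\}$ after the combination $(\ref{sigmacexpl})$). Since the poles are simple, it suffices to show that the residue along each of these lines vanishes. First I would cut down the number of cases using the reversal (antipode) symmetry enjoyed by solutions of the shuffle equations $(\ref{shuffle})$: in the reduced representation this reads $\overline f(x_1,x_2,x_3)=\overline f(x_2-x_3,\,x_1-x_3,\,-x_3)$, which interchanges $\{x_1=0\}$ with $\{x_2=x_3\}$ and fixes $\{x_3=0\}$ and $\{x_1=x_2\}$. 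I carry out the locus $x_3=0$ in full, since its residue is supplied by Lemma~\ref{residues}.

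Applying $\Res_{x_3=0}$ to $(\ref{sigmacexpl})$ and substituting the two formulae of Lemma~\ref{residues}, the factor $2b$ in the second formula cancels the $1/(24b)$, giving
\[
\Res_{x_3=0}\big((\sigma^c_{2n+1})^{(3)}\big)=\tfrac1{12}\Big(x_1^{2n}\star x_1^{-1}+\sum_{a+b=n}\tfrac{B_{2a}B_{2b}}{B_{2n}}\binom{2n}{2a}\,x_1^{2a}\star x_1^{2b-1}\Big).
\]
Using $\binom{2n}{2a}/(2n)!=1/\big((2a)!(2b)!\big)$ and $B_0=1$, and observing that the omitted boundary term $a=0$ contributes nothing because $1\star g=0$ for all $g$, the parenthesis equals $\tfrac{(2n)!}{B_{2n}}$ times the homogeneous component of total degree $2n-1$ of
\[
\Big(\textstyle\sum_{m\geq0}\tfrac{B_{2m}}{(2m)!}x^{2m}\Big)\star\Big(\sum_{m\geq0}\tfrac{B_{2m}}{(2m)!}x^{2m-1}\Big)=\big(x\,b(x)\big)\star b(x),
\]
where I have used $b(x)=\sum_{m\geq0}\tfrac{B_{2m}}{(2m)!}x^{2m-1}$ so that $x\,b(x)=\sum_{m\geq0}\tfrac{B_{2m}}{(2m)!}x^{2m}$.

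The crux is then the evaluation of this single star product. Writing $\big(x\,b(x)\big)\star b(x)$ out from the definition of $\star$ and setting $w=x_2-x_1$, the four terms collapse to $-w\big(b(x_1)b(x_2)-b(w)b(x_1)+b(w)b(x_2)\big)$, and the functional identity $(\ref{bfuncid})$ shows the bracket is $\tfrac14$. Hence $\big(x\,b(x)\big)\star b(x)=-\tfrac14(x_2-x_1)$ is homogeneous of degree $1$, so its degree-$(2n-1)$ component vanishes whenever $2n-1\neq1$, i.e.\ for all $n\geq2$; this gives $\Res_{x_3=0}\big((\sigma^c_{2n+1})^{(3)}\big)=0$. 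The residue at $\{x_2=x_3\}$ then vanishes by the reversal symmetry, and the residues at $\{x_1=0\}$ and $\{x_1=x_2\}$ follow from the analogous residue identities for $\xi^{(3)}_{2n+1}$ and the triple brackets, each of which again reassembles into a fixed-degree component of a star product of $b(x)$ that is killed by $(\ref{bfuncid})$.

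I expect the main obstacle to be precisely this last reduction of all four polar loci to the single Bernoulli identity. Once the residue along a line is exhibited in the shape $\tfrac1{12}[\,\cdots\,]$ of Bernoulli-weighted sums of $x^{2a}\star x^{2b-1}$, its vanishing for $n\geq2$ is immediate; the work lies entirely in establishing the analogues of Lemma~\ref{residues} at the lines $x_1=0$ and $x_1=x_2$ (the interior diagonal being the more delicate one) and in checking that, after clearing the binomial and Bernoulli prefactors, each sum is indeed the relevant homogeneous component of $\big(x\,b(x)\big)\star b(x)$ rather than some other combination. This is a purely combinatorial verification, with no further analytic input beyond the identity $(\ref{bfuncid})$.
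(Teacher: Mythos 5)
Your treatment of the locus $x_3=0$ is correct and is essentially the paper's own computation: after the factor $2b$ from $(\ref{residues})$ cancels the $1/(24b)$ in $(\ref{sigmacexpl})$, the Bernoulli-weighted sum is the degree-$(2n-1)$ component of $\big(x\,b(x)\big)\star b(x)$, which collapses via $(\ref{bfuncid})$ to $\tfrac{1}{4}(x_1-x_2)$ and hence contributes only in weight $3$. The gap is in the remaining three polar loci. For $x_1=0$ and $x_1=x_2$ you propose to establish ``analogues of Lemma~(\ref{residues})'' and to check that each reassembles into a fixed-degree component of the same star product; neither computation is carried out, and it is not evident that the residues of $\xi^{(3)}_{2n+1}$ and of $\{x_1^{2a},\{x_1^{2b},x_1^{-2}\}\}$ along the interior diagonal $x_1=x_2$ organise themselves into $\big(x\,b(x)\big)\star b(x)$ rather than some other combination --- you flag this yourself as the main obstacle. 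As written, the argument disposes of one line unconditionally and a second ($x_2=x_3$) only conditionally on $x_1=0$.

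The missing idea is that no further residue computation is needed: the full (non-linearised) double shuffle equations satisfied by $\sigma^c_{2n+1}$ propagate the vanishing from $x_3=0$ to the other three lines. Since $(\sigma^c_{2n+1})^{(1)}$ and $(\sigma^c_{2n+1})^{(2)}$ are pole-free, the stuffle equation $(\ref{stuffle})$ forces $f(x_1,x_2,x_3)-f(x_3,x_2,x_1)$ to be a polynomial for $f=(\sigma^c_{2n+1})^{(3)}$, so the residue along $x_1=0$ equals the already-vanishing residue along $x_3=0$. Taking the residue at $x_2=0$ of the shuffle equation $(\ref{shuffle})$ then kills the pole along $x_1=x_2$: the second and third terms contribute residues along the first-argument-equals-zero locus, which is now known to be pole-free, leaving only the residue along the diagonal from the first term. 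The reversal $(x_1,x_2,x_3)\mapsto(x_3,x_2,x_1)$ finally transfers $x_1=x_2$ to $x_2=x_3$. Your shuffle-antipode symmetry $\overline f(x_1,x_2,x_3)=\overline f(x_2-x_3,x_1-x_3,-x_3)$ is a legitimate variant of this last step (it does exchange $x_1=0$ with $x_2=x_3$), but a symmetry can only trade one locus for another; you still need an independent argument for $x_1=0$ and for $x_1=x_2$, and the functional equations supply it for free.
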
 
\begin{proof} Since $\mathfrak{pls}$ (definition \ref{defnpls})  is a Lie algebra for the Ihara bracket and is contained in $Q'$, the element $ \{x_1^{2a}, \{x_1^{2b}, x_1^{-2} \}$ is in  $\mathfrak{pls}$  has at most simple poles along $x_1=0, x_2=x_1, x_3=x_2$ and $x_3=0$, whenever 
$a, b\geq -1$. We first check that the residue of $(\sigma^c_{2n+1})^{(3)}$  along  $x_3=0$ vanishes for $n\geq 2$. It is given via $(\ref{sigmacexpl})$ by 
$$  {B_{2n} \over (2n)!}   \Res_{x_3=0} \big( \xi^{(3)}_{2n+1} \big) +  {1 \over 12} \sum_{a+b=n} {B_{2a} \over (2a)!} {B_{2b} \over (2b)!} {1 \over 2b} \Res_{x_3=0} \big( \{x_1^{2a}, \{x_1^{2b}, x_1^{-2} \} \} \big)  \ .  $$
Pass to generating series and substitute  $(\ref{residues})$ into the previous expression to give
\begin{eqnarray} 
12  \sum_{n\geq 1 } \Res_{x_3=0} \,\underline{\sigma}_{2n+1}^{(3)}     & = &    (x b(x) -1 ) \star x^{-1} +    (x  b(x) -1 ) \star (b(x) -  x^{-1})   \nonumber \\
  & = & (x b(x)-1) \star b(x)    \ , \nonumber
   \end{eqnarray} 
   where $b(x)$ was defined in  $(\ref{bdef})$.
 To  compute this quantity, observe  that  $1 \star f =0$ and 
$$ (xf \star f) (x_1,x_2) = (x_1-x_2) \big( f(x_1)f(x_2) - f(x_1)f(x_2\! -\!x_1) +f(x_2) f(x_2\! -\!x_1)\big) \ . $$
for any even function $f$. Substituting for $f=b(x)$  and using $(\ref{bfuncid})$, we deduce that 
  $(x b(x)-1) \star b(x)= {1\over 4} (x_1-x_2)$. Now let $n\geq 2$. The above argument proves that the  $(\sigma^c_{2n+1})^{(3)}$  have no poles along  $x_3=0$. Now we use the fact that   $\sigma^c_{2n+1}$ satisfies the double shuffle equations modulo products in depths two and three.
   Since 
  $(\sigma^c_{2n+1})^{(i)}$ has no poles for $i=1,2$,   
   the stuffle equation $(\ref{stuffle})$ implies that 
  \begin{equation} \label{sigmaantipode}
(\sigma^c_{2n+1})^{(3)}(x_1,x_2,x_3) -  (\sigma^c_{2n+1})^{(3)}(x_3,x_2,x_1) \in \Q[x_1,x_2,x_3]  \  . 
  \end{equation}
  It follows that its residue at $x_1=0$ also vanishes. The shuffle equation is 
$$    (\sigma^c_{2n+1})^{(3)}(x_1, x_{12}, x_{123})+ (\sigma^c_{2n+1})^{(3)}(x_2, x_{12}, x_{123})+ (\sigma^c_{2n+1})^{(3)}(x_2, x_{23}, x_{123})  = 0  \ . $$
  By taking the residue of this expression at $x_2=0$, we deduce that  $(\sigma^c_{2n+1})^{(3)}$  has no pole along $x_2=x_1$.
  Finally by $(\ref{sigmaantipode})$, this implies that it has no pole along $x_2=x_3$ either.  \end{proof}
The last part of this argument is completely
  general and follows from the dihedral symmetry structure of the linearised double shuffle equations \cite{BrDepth}, \S6.3.

\begin{rem} \label{remz_3} The element $(\sigma^c_3)^{(3)}$ does have poles.
It is equal to $3 z_3$, where
$$z_3 ={4\over 3} +{ x_1 \over x_3-x_2}+{x_3\over x_1-x_2} + {x_3-x_2\over x_1}+{x_1-x_2\over x_3} $$
and corresponds to a lift to $ \Der^{\Theta} \, \LL(a,b)$ of the `arithmetic image' of the element  $i_1(\sigma_3) $ in $ (\Der^{\Theta} \, \LL(a,b))/\ue.$ 
The corresponding derivation  was written down  in \cite{Po}. Computing the stuffle equation $(\ref{linstuffle})$ gives
$$z_3(x_1,x_2,x_3) +z_3(x_2,x_1,x_3) +z_3(x_2,x_3,x_1)= 4$$
which is non-zero, and proves, by proposition \ref{propuedbshf}, that $z_3$ is non-geometric, i.e.,  not an element of $\ue$. 
\end{rem}

\subsection{Zeta elements in depth three}

\begin{thm}  \label{thmsigmasaremotivic} The elements $\sigma^c_{2n+1}$ are in the image of the map
$$i_0\quad : \quad   \g / D^4 \g \To \LL(\x_0,\x_1) / D^4 \LL(x_0,x_1) \ .$$
\end{thm}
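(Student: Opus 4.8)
The plan is to reduce the statement to the assertion that, modulo $D^4$, \emph{every} solution of the double shuffle equations is motivic, and then to invoke Goncharov's depth-$3$ computation. First I would record that each $\sigma^c_{2n+1}$ genuinely defines a class in $\dmr_0/D^4\dmr_0$: for $n\geq 2$ its components $(\sigma^{(1)}_{2n+1},\sigma^{(2)}_{2n+1},\sigma^{(3)}_{2n+1})$ are pole-free (the preceding proposition) and solve $(\ref{shuffle})$ and $(\ref{stuffle})$ in depths $2,3$, as noted after Definition \ref{defnsigmac} using the closure of the double shuffle solutions under the Ihara bracket (Racinet's theorem \ref{thmRacinet}, in its rational-function version); the case $n=1$ is the classical canonical generator $\sigma_3$ and is checked by hand. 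Since $i_0(\g)\subseteq\dmr_0$, the theorem then follows once I show that $i_0$ induces a surjection $\g/D^4\g\to\dmr_0/D^4\dmr_0$.

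To prove this surjection I would pass to the associated graded for the depth filtration. The filtration restricts to $\dmr_0$, and the leading depth terms of its defining equations are the linearised double shuffle equations, so there are inclusions $\dg^d=\gr^d_D\, i_0(\g)\subseteq\gr^d_D\,\dmr_0\subseteq\ls^d$ for every $d$. In depths $d=1,2$ the outer spaces agree weight by weight and are classical: depth $1$ is one-dimensional in each odd weight and zero otherwise (Corollary \ref{corDepthParity}), and depth $2$ is the Ihara--Takao / even period-polynomial count. In depth $d=3$, Goncharov's dimension formula identifies $\ls^3$ with the motivic part $\dg^3$. Comparing dimensions then forces all the inclusions above to be equalities, giving $\gr^d_D\,\dmr_0=\dg^d$ for $d\leq 3$.

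Finally I would lift this graded equality. The map $i_0$ preserves the depth filtration, and by the previous step the induced map on associated graded pieces $\gr^d_D$ is surjective for $d\leq 3$; a filtered map that is surjective on the graded of a bounded filtration (here $D^1\supseteq\cdots\supseteq D^4=0$ in the quotient) is surjective. Concretely, given the class of $\sigma^c_{2n+1}$ in $\dmr_0/D^4$, its depth-$1$ leading term lies in $\gr^1_D\dmr_0=\dg^1$, so I can subtract $i_0(g_1)$ for some $g_1\in\g$ to land in depth $\geq 2$; the remainder is again in $\dmr_0$ (a vector space) and its depth-$2$ term lies in $\dg^2$, and after one more step the remainder lies in $\dmr_0\cap D^4\subseteq D^4$. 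Hence $\sigma^c_{2n+1}$ lies in the image of $i_0$. The one genuinely nontrivial ingredient, and the main obstacle, is Goncharov's depth-$3$ dimension formula for $\ls$: it is precisely what excludes extra, non-motivic depth-$3$ solutions, and without it the inclusion $\dg^3\subseteq\gr^3_D\dmr_0\subseteq\ls^3$ could be strict. Everything else is formal bookkeeping with the depth filtration.
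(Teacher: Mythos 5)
Your proposal is correct and follows essentially the same route as the paper: verify that the $\sigma^c_{2n+1}$ lie in $D^1/D^4\,\dmr_0$, then use that $i$ induces an isomorphism $D^1/D^4\,\g \to D^1/D^4\,\dmr_0$, which reduces via the depth-graded pieces to the dimension computations of Zagier in depth $2$ and Goncharov in depth $3$. You merely spell out the filtered-to-graded bookkeeping that the paper leaves implicit.
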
 
\begin{proof} The elements $\sigma_{2n+1}^c$ satisfy the double shuffle equations so lie in 
$D^1 /D^4 \dmr_0.$ The theorem follows immediately from the fact that 
$$i : \quad D^1 / D^4 \g \To D^1 /D^4 \dmr_0$$
is an isomorphism. This is equivalent to the statement that $i$ induces an isomorphism on  each depth-graded piece 
$$i: \quad \dg^{d} \cong \ls^d \qquad \hbox{ for } d\leq 3\ .$$
This is trivial for $d=1$, and follows from a computation of the dimensions of $\ls^{d}_n$ obtained by  Zagier  \cite{Za} for $d=2$, and  by Goncharov (\cite{G2}, theorem 1.5) for $d=3$.
  \end{proof}

\begin{rem} It follows  from the depth-parity theorem that the elements $\sigma^c_{2n+1}$ are uniquely determined in depth 4 also (but not in depth 5). 
A closed formula for these elements can   be  deduced from remark \ref{rem extend}. 
 \end{rem}

\section{Zeta elements in depth 3 via geometric derivations} \label{sectDrinGeom}

Recall the notations from \S\ref{sectElliptic}.

\begin{thm}  \label{thmsigmasepsilons} For all $n\geq 2$, we have an explicit expansion
$$i_1(\underline{\sigma}^c_{2n+1}) \equiv \underline{\varepsilon}^{\vee}_{2n+2} + \sum_{a+b=n}  
{1 \over 2b} \big[ \underline{\varepsilon}^{\vee}_{2a+2}, \big[ \underline{\varepsilon}_{2b+2}^{\vee},\underline{\varepsilon}_0^{\vee} \big] \big]
\pmod{B^4}$$
For $n=1$, we have $i_1(\sigma^c_3) = \varepsilon_4^{\vee} + z_3 \pmod{B^4}$, where $z_3$ is defined below.
\end{thm}
In terms of the standard normalisations, the previous equation is equivalent to
$$i_1(\sigma^c_{2n+1} ) \equiv \varepsilon_{2n+2}^{\vee} + \sum_{a+b=n} {B_{2a} B_{2b} \over B_{2n}} \binom{2n}{2a} {1 \over 24b } \big[ \varepsilon_{2a+2}^{\vee}, \big[ \varepsilon_{2b+2}^{\vee}, \varepsilon_0^{\vee}\big]\big]  \pmod{B^4}$$
One can also write the right-hand side  symmetrically using  elements
$$\mathrm{lw}_{a,b} =  {1 \over 2b} \big[ \underline{\varepsilon}^{\vee}_{2a+2}, \big[  \underline{\varepsilon}^{\vee}_{2b+2} , \underline{\varepsilon}^{\vee}_0\big] \big]+  {1 \over 2a} \big[ \underline{\varepsilon}^{\vee}_{2b+2}, \big[ \underline{\varepsilon}^{\vee}_{2a+2} , \underline{\varepsilon}^{\vee}_0\big] 
\big]\ . $$
These are lowest weight vectors for the action of $\mathfrak{sl}_2$, i.e.,  $\varepsilon_0 \mathrm{lw}_{a,b}=0$, 
where $\varepsilon_0$ is the derivation on $\LL(a,b)$ such that $\varepsilon_0(a)=0$ and $\varepsilon(b)=a$.
This yields a direct comparison with the period polynomials of Eisenstein series.

The strategy for the proof is as follows. The elements $\varepsilon_{2n+2}^{\vee}$ do not preserve the image of $\Der^1 \, \LL(\x_0,\x_1)^{\wedge}$ under $(\ref{phidefn})$ and do not descend
to derivations on $\LL(\x_0,\x_1)^{\wedge}$. However, if we pass to  commutative power series  representations via $\S\ref{algtopoly}$ and enlarge them  by introducing poles, then the  
 elements $\varepsilon_{2n+2}^{\vee}$, considered modulo $B^4$, descend to the elements $\xi_{2n+1}$ defined in the previous section.
 Theorem $\ref{thmsigmasepsilons}$ is then equivalent to theorem $\ref{thmsigmasaremotivic}$ via definition \ref{defnsigmac}.

 The proof of theorem $\ref{thmsigmasepsilons}$ given here is from the `bottom up': i.e. by lifting the analogous theorem on $\Pro^1 \backslash \{0,1,\infty\}$. 
 A different way to prove theorem $\ref{thmsigmasaremotivic}$ from the `top down' via $\mathcal{M}_{1,1}$, is sketched in the final section of the paper.

\subsection{Hain homomorphism in low depth}
Throughout this paragraph, we  apply the method of commutative power series \S$\ref{algtopoly}$, \S$\ref{sectLabalgtopoly}$  to  both $\LL(\x_0,\x_1)$ and $\LL(a,b)$.

We wish to consider the depth $r$ components of the Hain morphism \S\ref{sectHainmorphism}:
 $$
 \phi^r: \gr^{\bullet}_D \, T(\x_0,\x_1)  \To  \gr^{\bullet+r}_B \, T(a,b) \ .
$$
Translating into rational functions \S\ref{sectCommSeries}, and using the fact that  $\rho:  \gr^{\bullet}_D T(\x_0,\x_1) \cong P$
is an isomorphism, we obtain a commutative diagram
\begin{equation}
\begin{array}{ccc} \label{phionratfunc}
    \gr^{\bullet}_D \, T(\x_0,\x_1) & \overset{\phi^r}{\To} &\gr^{\bullet+r}_B\,  T(a,b)  \\
  \downarrow_{\rho} &   & \downarrow_{\ell}   \\
    P &   \overset{\phi_r}{\To}   & Q   \ .
\end{array}
\end{equation}
The map along the top is denoted by a superscript, the one along the bottom  by  a subscript.   The map $\phi^0$ is simply the associated graded
of  $(\ref{phidefn})$:
\begin{eqnarray} \label{gr0phi}
\phi^0 = \gr \, \phi:  \begin{cases}   \x_0 \mapsto a  \\
  \x_1 \mapsto [a,b]  \end{cases}
  \end{eqnarray} 
and via $T(\x_0,\x_1) = \gr_D T(\x_0, \x_1)$ and $T(a,b) = \gr_B T(a, b)$, we have $\phi = \sum_{r\geq 0} \phi^r$.
The idea of the following discussion is to  factorise the map $\phi$ as a composition
$$\LL(\x_0, \x_1)  \overset{\phi^0}{\To} \LL(a, [a,b])  \overset{p}{\To} \LL(a,b)^{\wedge}$$
where $p:  \LL(a,[a,b]) \rightarrow \LL(a,b)^{\wedge}$ is a continuous map which satisfies $p([a,b])=0$,  and to express $p$, in low degrees, via its rational function representation.

\begin{lem}  \label{Eqphi0} The map $\phi_0:P \rightarrow Q$ is the  inclusion $P \subset Q$.
\end{lem}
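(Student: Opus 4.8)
The plan is to upgrade $\phi_0$ from a linear map to an algebra homomorphism for the concatenation product $(\ref{cdotdefn})$, and then to pin it down on a short list of generators. Writing $\phi_0 = \ell \circ \phi^0 \circ \rho^{-1}$ according to the square $(\ref{phionratfunc})$, I would first check that each of the three maps in this composite is multiplicative for concatenation. The isomorphism $\rho : \gr^{\bullet}_D T(\x_0,\x_1) \overset{\sim}{\to} P$ carries word concatenation to the substitution product $(\ref{cdotdefn})$, essentially by definition. The leading term $\phi^0 = \gr\,\phi$ is an algebra homomorphism because $\phi$ extends to a filtered algebra homomorphism of completed tensor algebras carrying $D^r$ into $B^r$ (the fact that $\phi\, D^r \subset B^r$, proved above), so its associated graded is a graded algebra map; by $(\ref{gr0phi})$ it is determined by $\x_0 \mapsto a$ and $\x_1 \mapsto [a,b]$. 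Finally $\ell : \gr^{\bullet}_B T(a,b) \to Q$ is multiplicative because $\ell_r \ell_s = \ell_{r+s}$ (see $(\ref{ellrdefn})$), so dividing the concatenated polynomial by $\ell_{r+s}$ agrees with dividing the two factors by $\ell_r$ and $\ell_s$ separately. Composing, $\phi_0$ is an algebra homomorphism, and so is the inclusion $P \subset Q$, since both concatenation products are given by the same substitution formula.

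The second step is to observe that $P$ is generated, as an algebra under concatenation, by the depth-$0$ element $y_0 \in \Q[y_0]$ and the depth-$1$ element $1 \in \Q[y_0,y_1]$. This is immediate from the fact that $\x_0$ and $\x_1$ generate $T(\x_0,\x_1)$, together with $\rho(\x_0) = y_0$ and $\rho(\x_1) = 1$. Consequently it suffices to verify that $\phi_0$ and the inclusion agree on these two elements. The verification is short: $\phi_0(y_0) = \ell(\phi^0(\x_0)) = \ell(a) = y_0$, while $\phi_0(1) = \ell(\phi^0(\x_1)) = \ell([a,b]) = \ell_1^{-1}(y_0 - y_1) = 1$, using $\rho^{(1)}([a,b]) = y_0 - y_1$ and $\ell_1 = y_0 - y_1$. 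Both values coincide with the inclusion, which finishes the argument.

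I expect the only real content — and hence the main obstacle — to be the multiplicativity of $\phi^0$ together with the leading-term identification $\x_1 \mapsto [a,b]$; once these are in hand the lemma is forced on the two generators, and no computation in higher depth is needed. As a sanity check, or as a more computational alternative route, one can instead prove directly that for a monomial of $D$-degree $d$ one has $\rho^{(d)}\big(\phi^0(\x_0^{i_0}\x_1\cdots\x_1\x_0^{i_d})\big) = \ell_d\, y_0^{i_0}\cdots y_d^{i_d}$, by expanding each factor $[a,b] = ab - ba$ and watching the product of differences $(y_{k-1}-y_k)$ telescope into $\ell_d$; there the only obstacle is bookkeeping the combinatorial expansion, which the multiplicative argument above sidesteps entirely.
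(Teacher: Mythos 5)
Your proof is correct and is essentially the paper's argument: the paper's own proof shows directly that $\rho \circ \phi^0 \circ \rho^{-1}$ acts in depth $r$ as multiplication by $\ell_r$, by observing that replacing the $k$-th occurrence of $b$ by $ab-ba$ multiplies the commutative monomial by $y_{k-1}-y_k$, which is precisely your ``sanity check'' route and also the substance of your multiplicativity-plus-generators packaging. Recasting this as ``$\phi_0$ is a homomorphism for the concatenation product, so it suffices to check it on the generators $y_0$ and $1$'' is a harmless reorganization resting on the same telescoping identity $\ell_r \cdot \ell_s = \ell_{r+s}$ and the value $\ell([a,b])=1$.
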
 
\begin{proof}
We show that  $\phi_0: \Q[y_0, y_1,\ldots, y_r]   \rightarrow   \Q[y_0, y_1,\ldots, y_r] $
 is   multiplication by  the element  $\ell_r$ of $(\ref{ellrdefn})$.
  To see this,  $(\ref{gr0phi})$  is the map $\x_0 \rightarrow a, \x_1 \rightarrow b$, 
followed by the composition of $r$ maps, where the $k^{\mathrm{th}}$ map, for $1\leq k \leq r$, replaces the $k^{\mathrm{th}}$ occurrence of the letter $b$  in $a^{i_0} b a^{i_1} b \ldots a^{i_{r-1}} b a^{i_r}$ with $ab-ba$. On commutative power series $(\ref{rhosecond})$, this is
multiplication by $y_{k-1}-y_k$.  \end{proof}

 We next determine $\phi_r$ for $r=1,2$. In degree $r=1$,   it follows from the definition $(\ref{phidefn})$ of $\phi(\x_0) = a + {1\over 2} [a,b] + \ldots$ that it is a composition
 of $\phi^0$ $(\ref{gr0phi})$,  whose image consists of words in $a, [a,b]$, followed by the derivation in  $D^{\Theta}$ (\S\ref{sectLabalgtopoly}) which sends $a \mapsto  {1 \over 2} [a,b]$ and $[a,b]$ to zero.  Note that the latter does not extend to an element of $\Der^{\Theta} \, \LL(a,b)$. It is nonetheless represented, via proposition \ref{propAction2}, by 
$$-s^{(1)}  = {1 \over 2 } \ell'( [a,b])   = {1 \over 2 (y_0-y_1)}  \in Q'\ ,$$  whose reduced representation is minus $(\ref{sdef})$.
Therefore  if $f\in P$, we have 
\begin{equation} \label{Eqphi1} 
\phi_1(f) =  -s^{(1)} \eact  \phi_0(f)\  . 
\end{equation}
    Similarly, in degree $r=2$, we have  for $f\in P$, 
\begin{equation} \label{Eqphi2} 
\phi_2(f)  =  {1\over 2} s^{(1)} \eact (s^{(1)} \eact \phi_0(f)) - s^{(2)} \eact \phi_0(f)  \ ,  
\end{equation}
where 
$$-s^{(2)}  = {1 \over 12} \ell' ( [b,[b,a]])  =   {1 \over 12}   {y_0 -2 y_1 +y_2 \over  (y_0-y_1)(y_1-y_2)(y_0-y_2) }      \in Q'\ .$$ 
This holds from the definition of $\phi$,  since $\phi^2- {1\over 2} \phi^1\phi^1$ is the composition of $\phi^0$
followed by the derivation $\LL(a,[a,b])\rightarrow \LL(a,b)$ which sends $a \mapsto  {1 \over 12} [b,[b,a]]$ and $[a,b]$ to zero. By proposition $\ref{propAction2}$ the latter  corresponds
to the action of $s^{(2)}$. 

\begin{rem} Because elements of $Q'$ act trivially on $\ell([a,b])=-s^{(1)} \eact y_0$, we have
\begin{equation} \label{s1comm}
 s^{(1)} \eact ( f \eact y_0) = \{s^{(1)},f\} \eact y_0  \quad  , \quad \hbox{for all  } f\in Q' \ .
 \end{equation} 
This can also be read off corollary $\ref{cordx1}$ upon writing 
$  \phi^1 = {1 \over 2} \phi^0 \circ  \partial_{\x_1}$.\end{rem}

\subsection{Proof of theorem \ref{thmsigmasepsilons}}
Recall that an element $\sigma \in \Der^1 \LL(\x_0,\x_1)$ lifts to $\widetilde{\sigma} \in \Der^1 \LL(a,b)$ if and only if the following equation holds  in $\LL(a,b)$:
$$\widetilde{\sigma} \phi(\x_0) = \phi \sigma(\x_0)$$
Finding an element $\sigma \in \Der^1 \LL(\x_0,\x_1)$ whose lift is $\varepsilon_{2n+2}^{\vee}$, is equivalent via $(\ref{phidefn})$,  modulo terms of $B$-degree $\geq 4$,  to the following equation
$$ \varepsilon_{2n+2}^{\vee} \big( a + {1 \over 2} [a,b] + {1 \over 12} [b,[b,a]] \big) \equiv \phi(  \sigma(\x_0)) \pmod{B^4}\ .$$
It is easy to verify that it has no solution $\sigma \in \Der^1\, \LL(\x_0,\x_1)$. Note that
$ \varepsilon_{2n+2}^{\vee} ([a,b]) =0$ so the middle term on the left-hand side can be dropped.
 We can pass to rational function representations via propositions $\ref{propAction1}$ and $\ref{propAction2}$,
 and view the previous equation in $Q$. Since $\ell'( \varepsilon_{2n+2}^{\vee} ) = (y_1-y_0)^{2n}$, and $\rho(a)=y_0$, $\rho(\x_0)=y_0$,   it is equivalent to 
\begin{equation} \label{chicondition} 
 (y_1-y_0)^{2n} \eact \Big(\big(1 -  s^{(2)} \big) \eact y_0 \Big) \equiv \phi(  \rho'(\sigma) \odot y_0)  \pmod{B^4}
 \end{equation} 
 It has no solutions $\rho'(\sigma)\in  P'$. Now observe that 
 $$\phi_0 ( \rho'(\sigma) \odot y_0) = \phi_0 (\rho'(\sigma)) \eact y_0$$
 since the formulae for $\odot$ and $\eact$ (propositions \ref{propAction1} and \ref{propAction2}) are formally identical and $\phi_0$ is the identity.
 Let us write $\chi$ instead of $ \phi_0 (\rho'(\sigma))$ and try to solve $(\ref{chicondition})$ for $\chi \in Q'$.
 By  $(\ref{Eqphi1})$, $(\ref{Eqphi2})$,  the right-hand side of $(\ref{chicondition})$ is equal,  after expanding $\phi\equiv \phi^0 + \phi^1 + \phi^2 \mod B^4$ and  applying  $(\ref{s1comm})$, to 
$$  \chi  \eact y_0   -   \{ s^{(1)} , \chi \} \eact y_0    + {1 \over 2} \{ s^{(1)},\{ s^{(1)} , \chi \}\} \eact y_0 
  - s^{(2)} \eact \big(  \chi \eact y_0 \big) \pmod{B^4} \ .$$
   Using the fact  (proposition $\ref{propAction2}$) that the Lie bracket $\{ \  , \  \}$ is the antisymmetrization of $\eact$, 
   and that the action of $Q'$ on $y_0 \in Q$ is faithful, we deduce that the 
    components of equation $(\ref{chicondition})$ in  degrees $1,2,3$ are the equations:
\begin{eqnarray} 
 (y_1-y_0)^{2n}  &= &    \chi^{(1)} \nonumber \\
 0 & = &     \chi^{(2)}-  \{ s^{(1)} ,\chi^{(1)}\}    \nonumber \\
-(y_1-y_0)^{2n} \eact s^{(2)}  & = & \chi^{(3)} - \{s^{(1)}, \chi^{(2)}\}+ {1 \over 2}  \{ s^{(1)},\{ s^{(1)} ,\chi^{(1)} \}\} -   s^{(2)} \eact \chi^{(1)}
  \ .
 \nonumber 
 \end{eqnarray}
 These three equations are 
 equivalent to the definition of the elements $\chi_{2n+1}$ after passing to reduced versions $(y_0,y_1,y_2) \mapsto (0,x_1,x_2)$
 and using  definition $(\ref{eactdefn})$.

\section{Explicit rational associator  in depths $\leq 3$}
In section \S$\ref{sectDrin3}$ we wrote down explicit solutions to the double shuffle equations modulo products,  in  odd weights and depths $\leq 3$. 
The goal of this paragraph is  to discuss solutions to the full double shuffle equations with even weights in the same range.
 
\subsection{Double shuffle equations} \label{secttau}
The full double shuffle equations in depth two are given by the pair of equations:
\begin{eqnarray} \label{Fulldepth2}Ê
f^{(2)}(x_1,x_1+x_2) + f^{(2)}(x_2,x_1+x_2)&  =  &f^{(1)}(x_1)f^{(1)}(x_2) \nonumber  \\
f_{\star}^{(2)}(x_1,x_2) + f_{\star}^{(2)}(x_2,x_1) & = &  { f_{\star}^{(1)}(x_1) -f_{\star}^{(1)}(x_2) \over x_2-x_1} +  f_{\star}^{(1)}(x_1)f_{\star}^{(1)}(x_2) 
\end{eqnarray}
where $f^{(1)}, f_{\star}^{(1)} \in \Q[[x_1]]$ and $f^{(2)} , f_{\star}^{(2)} \in \Q[[x_1,x_2]]$ are formal power series in commuting variables. 
A  power series $f$ without a subscript will  denote  its shuffle-regularised version; a subscript $\star$ will denote 
its stuffle-regularised version. They differ by a factor which is well-understood \cite{Ra}. Our normalisations will be such that 
$$f^{(1)}_{\star}=f^{(1)}  \quad  \hbox{ and } \quad  f^{(2)}_{\star}= f^{(2)} + {1\over 48} $$
One can easily convince oneself that the second equation of $(\ref{Fulldepth2})$ is the direct translation of  the stuffle product formula
$$\zeta(m,n) + \zeta(n,m) + \zeta(m+n) = \zeta(m)\zeta(n)\ .$$
Note that, in contrast to the double shuffle equations modulo products,  the right-hand term in the previous equation means that we must consider all weights simultaneously.  
The  shuffle equation in depth three takes the form 
$$
f^{(3)} (x_1,x_{12},x_{123}) + f^{(3)}(x_2,x_{12}, x_{123}) +   f^{(3)}(x_2,x_{23}, x_{123}) = f^{(1)}(x_1) f^{(2)}(x_2,x_{23}) 
$$
and the stuffle equation takes the form
\begin{multline}
f_{\star}^{(3)}(x_1, x_2, x_3)+f_{\star}^{(3)}(x_2, x_1, x_3)+f_{\star}^{(3)}(x_2, x_3, x_1) \quad   =   \quad   \nonumber \\
   { f_{\star}^{(2)}(x_2, x_1)-f_{\star}^{(2)}(x_2,x_3)  \over  x_3-x_1}       +  { f_{\star}^{(2)}(x_1, x_3)-f_{\star}^{(2)}(x_2,x_3) \over  x_2-x_1}   
   + f_{\star}^{(1)} (x_1) f_{\star}^{(2)} (x_2, x_3)  \ .  \nonumber 
\end{multline}
where in this case the comparison between the two regularisations is given by 
$$f^{(3)}(x_1,x_2,x_3) = f_{\star}^{(3)}(x_1,x_2,x_3) +{1 \over 96}( b(x_1) - {1 \over x_1}) $$

The general principle \cite{AA} of constructing solutions to these equations with poles and correcting with counter terms also holds in this situation. The full double shuffle equations are inhomogeneous in two different ways:  there are several linear  terms of lower depths and a single term consisting of  products of elements of lower depth. The strategy is to construct solutions $\gamma$ to the equations in which lower depth terms are omitted, but with all product terms retained, and to use the element
$(\ref{sdef})$   to convert these solutions into polar solutions to the full equations. The polar parts are then subtracted using counterterms 
involving the elements $\xi_{2n+1}$ constructed before.
\subsection{Polar solutions}
Recall that  $b_1(x)=b(x) $  $(\ref{bdef})$ is a   generating series for Bernoulli numbers whose Laurent series is 
$x^{-1} + O(x)$.  Thinking of $b_1(x)$ as a deformation  of the rational function 
 $x^{-1}$, leads us  to introduce,  following $(\ref{sdef})$, the function
$$b_2(x_1,x_2)= {1 \over 3} \big( b_1(x_1)  b_1(x_2) +  b_1(x_2) b_1(x_1-x_2)   \big) \ .  $$
With these definitions, set
\begin{eqnarray}
2\gamma^{(1)} &=  & -b_1 \nonumber \\
4 \gamma^{(2)} &=  &  -b_2 + \textstyle{1\over 2} b_1 \circb b_1 \nonumber \\
 8 \gamma^{(3)} &=  &  -b_2 \circb b_1  + \textstyle{ 1\over 6} b_1 \circb ( b_1 \circb b_1)\nonumber 
\end{eqnarray}
The element $\gamma^{(2)}$, for example, solves the semi-homogeneous equations
\begin{eqnarray} \label{semihom}Ê
\gamma^{(2)}(x_1,x_1+x_2) + \gamma^{(2)}(x_2,x_1+x_2)  &= & \gamma^{(1)}(x_1)\gamma^{(1)}(x_2) \\
\gamma_{\stu}^{(2)}(x_1,x_2) + \gamma_{\stu}^{(2)}(x_2,x_1)  & =  &  \gamma^{(1)}(x_1)\gamma^{(1)}(x_2) \nonumber 
\end{eqnarray}
where $\gamma_{\stu}^{(2)} = \gamma^{(2)} + {1 \over 48}$, and  $\gamma^{(3)}$ satisfies the equations
\begin{eqnarray} \label{semihom}Ê
\gamma^{(3)}(x_1,x_{12}, x_{123}) + \gamma^{(3)}(x_2,x_{12}, x_{123}) + \gamma^{(3)}(x_2,x_{23}, x_{123})  &= & \gamma^{(1)}(x_1)\gamma^{(2)}(x_2,x_{23}) \nonumber \\
\gamma_{\stu}^{(3)}(x_1,x_2,x_3) + \gamma_{\stu}^{(3)}(x_2,x_1,x_3)  + \gamma_{\stu}^{(3)}(x_2,x_3,x_1)  & =  &  \gamma^{(1)}(x_1)\gamma_{\stu}^{(2)}(x_2,x_3) \nonumber 
\end{eqnarray}
where  $\gamma_{\stu}^{(3)}(x_1,x_2,x_3) = \gamma^{(3)}(x_1,x_2,x_3) + {1 \over 48} \gamma^{(1)}(x_1)$. 
This follows easily from $(\ref{bfuncid})$.

New  series $\Theta$ are now defined   by twisting   on the left  by  the elements $(\ref{sdef})$:
\begin{eqnarray} 
\Theta^{(1)} & = &  \gamma^{(1)}   \nonumber  \\
\Theta^{(2)} & = &  \gamma^{(2)} +  s^{(1)} \circb \gamma^{(1)}   \nonumber  \\
\Theta^{(3)} & = &    \gamma^{(3)}  + s^{(1)} \circb \gamma^{(2)}  +s^{(2)} \circb \gamma^{(1)}  + \textstyle{1 \over 2} s^{(1)} \circb (  s^{(1)} \circb \gamma^{(1)} )  \nonumber 
\end{eqnarray}
They  have poles in $x_i$. More precisely,  the element   $d_r \Theta^{(r)} $ can be viewed as a formal power series
in $\Q[[x_1,\ldots, x_r]]$, where $d_r = x_1\ldots x_r \prod_{i<j} (x_i-x_j)$,  for $1\leq r\leq 3$.

For $1 \leq r \leq 3$, we can write 
$$\Theta^{(r)} = p_r + \Phi^{(r)}\ ,$$  where $p_r$ is a  homogeneous rational function in $x_1,\ldots, x_r$ of degree $-r$,
and $\Phi^{(r)}$ is a power series in homogeneous rational functions of degrees $> 1-r$.  With these definitions,  one verifies that the truncated elements
$\Phi^{(r)}$ are polar solutions to the full  double shuffle equations \S\ref{secttau} 
 with
$$\Phi_*^{(2)} = \Phi^{(2)}+ {1 \over 48} \qquad \hbox{ and } \qquad \Phi_*^{(3)} = \Phi^{(3)} + {1 \over 48} \Phi^{(1)}(x_1) \ .$$
It remains to remove the poles from the $\Phi^{(r)}$ to obtain bona fide  polynomial solutions to the double shuffle equations  with no polar terms.

\subsection{Subtraction of counterterms}
Let us define a formal power series by 
\begin{equation}
C = \sum_{n \geq 1} {1 \over 2n} \{ \underline{\xi}_{-1}, \underline{\xi}_{2n+1}\}
\end{equation} 
where the elements $\underline{\xi}_{2n+1}$ were defined in \S\ref{sectChis}. Its definition was only given in depths $1,2,3$. 
Using this element to provide counter terms, we can finally  write down a canonical element $\tau$ in depths $1,2,3$ as follows:
\begin{eqnarray} 
\tau^{(1)} & = &   \Phi^{(1)}    \nonumber  \\
\tau^{(2)} & = &  \Phi^{(2)} + C^{(2)}  \nonumber  \\
\tau^{(3)} & = &    \Phi^{(3)} +C^{(2)} \circb \Phi^{(1)} +C^{(3)}\nonumber 
\end{eqnarray}
A straightforward residue computation along the lines of \S\ref{sectPolescancel} suffices to show  that the elements $\tau^{(i)}$, where  $i=1,2,3$ have no poles, and therefore lie in $\Q[[x_1,\ldots, x_i]]$. We omit the  details.
Note that by the depth-parity theorem, the element $\tau^{(3)}$ is uniquely determined from $\tau^{(2)}$. By a version of \S\ref{algtopoly}, the coefficients of $\tau^{(i)}$ correspond
to words in $\x_0,\x_1$, and taking the limit defines a unique  element 
$$\tau \in \Q\langle \langle \x_0, \x_1 \rangle \rangle/D^4 \Q\langle \langle e_0, e_1 \rangle \rangle\ .$$
\begin{thm} The element $\tau$ is an explicit  (shuffle-regularized)   solution to the full double shuffle equations in depths $\leq 3$. 
\end{thm}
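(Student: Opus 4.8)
The plan is to show that the full double shuffle property, already established for the polar series $\Phi^{(r)}$ in the preceding discussion, survives the final subtraction of counterterms, and then to invoke pole-freeness. The organizing principle, taken from \cite{AA} and Racinet's closure theorem (Theorem \ref{thmRacinet}), is that the solutions to the double shuffle equations \emph{modulo products} act on the left, via the Ihara operation $\circb$ of Proposition \ref{propAction1}, on the solutions to the \emph{full} equations of \S\ref{secttau}, and that this action preserves the full equations. Since the $\Phi^{(r)}$ are given to be full (polar) solutions, the theorem reduces to recognizing the passage $\Phi \mapsto \tau$ as one such twist and checking that the twisting datum lies in the modulo-products solution space.

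The core step is to identify $\tau$ as the $\circb$-twist of $\Phi$ by the counterterm series $C = \sum_{n\geq 1}\tfrac{1}{2n}\{\underline{\xi}_{-1},\underline{\xi}_{2n+1}\}$. First I would note that $C$ solves the double shuffle equations modulo products: the $\xi_{2n+1}$ do so by the proposition of \S\ref{sectChis}, and the space of such solutions is closed under the Ihara bracket by (the modulo-products version of) Racinet's theorem, so the Lie combination $C$ inherits the property; moreover $C^{(1)}=0$, since every bracket $\{\underline{\xi}_{-1},\underline{\xi}_{2n+1}\}$ begins in depth $2$. The defining formulae $\tau^{(1)}=\Phi^{(1)}$, $\tau^{(2)}=\Phi^{(2)}+C^{(2)}$, $\tau^{(3)}=\Phi^{(3)}+C^{(2)}\circb\Phi^{(1)}+C^{(3)}$ are then exactly the depth-graded components of the exponential left-action of $C$ on $\Phi$. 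The point worth isolating is that, because $C^{(1)}=0$, every term quadratic or higher in $C$ (such as the $\tfrac12\,C\circb(C\circb\cdot)$ terms that were needed in the $\Theta$-construction because $s^{(1)}\neq 0$) lands in depth $\geq 4$; working modulo $D^4$, the exponential twist therefore collapses to the first-order expression above. Granting that the left-$\circb$-action of a modulo-products solution carries full solutions to full solutions, $\tau$ is a full solution in depths $\leq 3$.

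It then remains to record pole-freeness. The series $C$ is engineered so that its polar part cancels that of $\Phi$, which is confirmed by the residue computation indicated in the text: one checks the residues along $x_1=0$, $x_2=x_1$, $x_2=x_3$ and $x_3=0$ exactly as in \S\ref{sectPolescancel}, using the dihedral symmetry of the linearised equations to reduce to a single residue. This gives $\tau^{(i)}\in\Q[[x_1,\ldots,x_i]]$ for $i\leq 3$, and assembling the coefficients as in \S\ref{algtopoly} produces a genuine element $\tau\in\Q\langle\langle\x_0,\x_1\rangle\rangle/D^4$; being pole-free, it is a bona fide shuffle-regularised solution.

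The hard part will be the general principle underlying the core step: verifying that the left-$\circb$-twist by a modulo-products solution really does preserve the \emph{inhomogeneous} full equations. Unlike the linearised equations, the full equations are neither weight-homogeneous nor linear—they carry genuine product terms and lower-depth linear terms—so one cannot simply add solutions. The verification must track how $\circb$ interacts simultaneously with the shuffle coproduct (where its behaviour is governed by Proposition \ref{propAction1}) and with the stuffle regularisation, and must confirm that the regularisation-comparison shifts relating $f$ and $f_\star$ (namely $\tfrac{1}{48}$ in depth two and $\tfrac{1}{96}(b(x_1)-x_1^{-1})$ in depth three) are compatible with the twist. Pinning down this compatibility—essentially re-deriving the relevant fragment of the \cite{AA} formalism in the precise form needed here—is where the real work lies; by comparison, the residue bookkeeping for pole cancellation is routine.
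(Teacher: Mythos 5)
Your proposal is correct and follows essentially the route the paper intends (and largely leaves implicit): you rightly identify $\tau$ as the left Ihara twist $\exp_{\circb}(C)\circb\Phi$ of the full polar solution $\Phi$ by the counterterm series $C$, which collapses to the stated first-order formulae because $C^{(1)}=0$, invoke the $\mathrm{DMR}_0$-torsor structure (Racinet, extended to rational functions) to see that twisting by a modulo-products solution preserves the full equations, and finish with the residue computation of \S\ref{sectPolescancel} for pole-freeness. This matches the paper's argument, which cites the same torsor principle immediately after the theorem and omits the residue details.
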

A similar construction holds in depth four, but there is \emph{a priori} no canonical way to cancel the poles:
  one must subtract counter-terms
consisting of quadruple brackets in the $\xi_{2n+1}$'s, which involves some choices because of  quadratic relations amongst them (see \S\ref{sectQuadrel}). It is an interesting question to ask if the element $\tau$ defined above 
can in fact  be  extended to an explicit  associator in  higher depths.

Since the solutions to the full double shuffle equations is a torsor under the left action of the prounipotent algebraic group $\mathrm{DMR}_0$ whose Lie algebra is $\dmr_0$, 
we can twist the elements $\tau^{(i)}$ on the left with our canonical elements $\exp_{\!\circb} \! \sigma_{2n+1}^c$ to obtain all other rational solutions 
to the double shuffle equations in depths $\leq 3$.
\begin{cor}  Every rational solution $s$ to the full double shuffle equations  in depths $\leq 3$  can be written explicitly  in the form
$$ s  \equiv \exp_{\!\circb}\!(g)\circb \tau \pmod{D^4}$$
where $g \in (\g/D^4\g)^{\wedge}$ is  an (infinite) linear combination of commutators in the canonical elements $\sigma_{2n+1}^c$
 of length $\leq 3$.\end{cor}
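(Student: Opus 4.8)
The plan is to exploit the torsor structure recalled in the paragraph preceding the corollary: working throughout modulo $D^4$, the rational solutions to the full double shuffle equations form a torsor under the left $\circb$-action of the prounipotent group $\mathrm{DMR}_0$, and by the preceding theorem $\tau$ is an explicit point of this torsor. First I would use the torsor property to produce, for any given rational solution $s$, a unique $h \in \mathrm{DMR}_0$ with $s \equiv h \circb \tau \pmod{D^4}$, existence coming from transitivity and uniqueness from freeness. Since $\mathrm{DMR}_0$ is prounipotent with Lie algebra $\dmr_0$, the exponential $\exp_{\circb} : \dmr_0 \to \mathrm{DMR}_0$ is bijective, so I may write $h = \exp_{\circb}(g)$ for a unique $g \in \dmr_0$ and obtain $s \equiv \exp_{\circb}(g) \circb \tau \pmod{D^4}$. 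Because both $s$ and $\tau$ are rational, the element $g$ lies in $\dmr_0$ with rational coordinates.

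It then remains to show that, modulo $D^4$, the element $g$ is a linear combination of commutators of the $\sigma^c_{2n+1}$ of length $\leq 3$. The operation $\circb$ and the Ihara bracket $\{\,,\,\}$ respect the depth filtration $D$, so it suffices to understand $g$ in $\dmr_0 / D^4 \dmr_0$. Here I would invoke the isomorphism $i : D^1/D^4\, \g \cong D^1/D^4\, \dmr_0$ established in the proof of Theorem \ref{thmsigmasaremotivic}; it identifies $\dmr_0$ with $\g$ in depths $\leq 3$, so the class of $g$ lies in $\g/D^4\g$.

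The final step is a filtered induction on depth. Since $\g$ is generated by the depth-one elements $\sigma_{2n+1}$, the associated depth-graded Lie algebra $\dg = \gr_D \g$ is generated in depth one; as $(\sigma^c_{2n+1})^{(1)} = x_1^{2n}$, the classes of the $\sigma^c_{2n+1}$ form a basis of $\dg^1$ (by the depth-parity theorem, corollary \ref{corDepthParity}), whence $\dg^d$ is spanned by $d$-fold Ihara brackets of these classes for each $d$. Thus the image of $g$ in $D^1/D^2$ is a combination $\sum_n c_n \sigma^c_{2n+1}$; subtracting it leaves a remainder in $D^2$ whose image in $D^2/D^3$ is a combination of brackets $\{\sigma^c_{2a+1}, \sigma^c_{2b+1}\}$; subtracting again leaves a remainder in $D^3$ whose image in $D^3/D^4$ is a combination of triple brackets; the final remainder lies in $D^4$ and hence vanishes modulo $D^4$. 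This exhibits $g$ as the desired (weight-by-weight finite, but overall infinite) linear combination of commutators of the $\sigma^c_{2n+1}$ of length $\leq 3$, and hence as an element of $(\g/D^4\g)^{\wedge}$.

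The formal torsor argument in the first and last steps is routine; the real content, and the main obstacle, lies in the identification $\dmr_0 \equiv \g \pmod{D^4}$ used in the second step. This is precisely where the non-trivial dimension computations for the depth-graded double shuffle Lie algebra enter, namely Zagier's count in depth two and Goncharov's theorem in depth three, packaged into Theorem \ref{thmsigmasaremotivic}. Without the equality of dimensions in depth three one could only conclude that $g \in \dmr_0$, not that it is expressible through the motivic generators $\sigma^c_{2n+1}$. A secondary point to verify is that passing from the depth-graded statement back to the filtered statement modulo $D^4$ is legitimate; this is ensured by the finite length of the filtration in the relevant range together with the compatibility of $\circb$ with $D$.
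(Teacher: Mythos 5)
Your proposal is correct and follows essentially the same route as the paper, which justifies the corollary in a single sentence by invoking the torsor structure of the solutions to the full double shuffle equations under the left $\circb$-action of $\mathrm{DMR}_0$ and twisting $\tau$ by $\exp_{\circb}$ of elements of $\g$. The details you supply — the identification $D^1/D^4\,\g \cong D^1/D^4\,\dmr_0$ via Theorem \ref{thmsigmasaremotivic} (resting on the Zagier and Goncharov dimension counts) and the depth-by-depth induction expressing $g$ through brackets of the $\sigma^c_{2n+1}$ — are exactly the ingredients the paper leaves implicit.
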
 
Note that the element $g$ in the corollary is not unique because of quadratic relations \S\ref{sectQuadrel} in $ \g /D^4 \g$.

\subsection{Remarks} The elements $\tau^{(i)}$ for $i\leq 3$ define a homomorphism from motivic multiple zeta values  in depths $\leq 3$ and even weight to rational numbers, given by 
\begin{equation} \label{taumap} 
\tau^{(r)}  \zetam(n_1, \ldots, n_r) =  \hbox{ coeff. of } x_1^{n_1-1} \ldots  x_r^{n_r-1}  \hbox{ in } \tau^{(r)}\ .
\end{equation} 
They respect all the relations between motivic multiple zeta values and satisfy
$$\tau^{(1)}  \zetam(2n) = {\zeta(2n)/ ( 2\pi i)^{2n}} \quad \in \Q \ . $$
Likewise, the canonical elements $\sigma_{2n+1}^c \in \g /D^4 \g$ define a map from motivic multiple zeta values  in depth $\leq 4$ and odd weight  to rational numbers given by 
\begin{equation} \label{sigmamap} 
\sigma_{2n+1}^{(r)}  \zetam(n_1, \ldots, n_r) =  \hbox{ coeff. of } x_1^{n_1-1} \ldots  x_r^{n_r-1}  \hbox{ in } \sigma^{(r)}
\end{equation} 
where $2n+1= n_1+\ldots + n_r$. The maps $(\ref{sigmamap})$  annihilate products,  respect all relations between motivic multiple zeta values (modulo products) and satisfy
$$\sigma_{2n+1}^{(1)}  \zetam(2n+1) = 1 \ . $$
In \cite{BrDec}, a method was described to decompose any motivic multiple zeta value (and hence, by taking the period, any actual multiple zeta value) into a chosen basis of motivic multiple zeta values using the motivic coaction. The method is not an algorithm because it requires a transcendental computation at each step using the period map. However, the maps $(\ref{taumap})$ and $(\ref{sigmamap})$ can be used as a substitute for the period map. Thus we obtain as a corollary an exact algorithm to decompose any multiple zeta value of depth $\leq 3$ (and depth $\leq 4$ in the case of odd weight) into a chosen basis of multiple zeta values of the same or smaller depth.

\section{Cuspidal elements and the Broadhurst-Kreimer conjecture} \label{sectCuspelements}
We can recast the version of the Broadhurst-Kreimer conjecture stated in \cite{BrDepth}  using the  $\sigma^c_{2n+1}$,  first in   $\gr_D\, \Der^1\, \LL(\x_0,x_1)$ and then in the elliptic setting in $\gr_B \,\Der^{\Theta} \,\LL(a,b)$.

We seek a conjectural presentation for $\dg$. The first set of  obvious generators are the images of the  zeta elements $\sigma_{2n+1} \in D^1 \g$ in the associated graded $\dg^{\bullet}=\gr^{\bullet}_D \g$:
\begin{equation} 
\overline{\sigma}_{2n+1} \in \dg^1_{2n+1} \qquad \hbox{ for all } n\geq 1\ .
\end{equation} 
They are well-defined (independent of the choice of $\sigma_{2n+1}$). They satisfy quadratic relations 
which can be  described in terms of period polynomials.

\subsection{Reminders on period polynomials}
Let $n\geq 0$ and let $V_n = \bigoplus_{i+j=n} \Q x_1^i x_2^j$ denote the vector space of homogeneous polynomials of degree $n$. It is 
equipped with the right action of $\Gamma= \SL_2(\Z)$  given by the formula
$$P(x_1,x_2)|_{\gamma} = P( ax_1+bx_2, cx_1+dx_2) \quad \hbox{ if }\quad \gamma =   \left( \begin{smallmatrix} a&b\\ c&d \end{smallmatrix} \right) \in \Gamma \ , P \in V_n \ . $$
Let $V'_n \subset V_n $ denote the subspace of polynomials which vanish at $x_1=0$ and $x_2=0$. It is naturally isomorphic to  the  vector space quotient $V_n / (\Q x_1^n \oplus \Q x_2^n)$.
\begin{defn} \label{defperiodpoly}  Let $n\geq 1$ and let $S_{2n}\subset V'_{2n}$ denote the vector space of homogeneous polynomials $P(x_1,x_2)$  of degree  $2n$ satisfying  $P(x_1,0)=P(0,x_2)=0$  and 
$$P(x_1,x_2) + P(x_2,x_1)=  0  \quad \ , \  \quad P(x_1, x_2) + P(x_1-x_2,x_1) + P (-x_2, x_1-x_2) =0  \ . $$
The subspace $S_{2n}^+\subset S_{2n}$ consisting of polynomials which are of even degree in both $x_1$ and $x_2$ is called the space 
of even (cuspidal) period polynomials.
\end{defn}

\begin{rem} Denote the standard  elements  $S =   \left( \begin{smallmatrix} 0&-1\\ 1&0 \end{smallmatrix} \right)$ and $T=  \left( \begin{smallmatrix} 1&1\\ 0&1 \end{smallmatrix} \right)$ in $\Gamma$. Consider the following linear map from right $\Gamma$ group cochains (\cite{MMV}, \S2.3) to polynomials 
\begin{equation} \label{Z1topolys}  f\mapsto \pi(f(S))  : Z_{\cusp}^1(\Gamma; V_{2n})  \To  V'_{2n}  \end{equation}
where $Z_{\cusp}^1(\Gamma;V_n)\subset Z^1(\Gamma;V_n)$ is the subgroup of cochains $f$ such that $f(T)=0$,  and $\pi: V_n \rightarrow V'_n$
is the projection. It is well-known that this induces an isomorphism
$$ H^1_{\cusp}(\Gamma; V_{2n} ) \overset{\sim}{\To} S_{2n}$$
where  $H^1_{\cusp}(\Gamma;V_{2n})=\ker (H^1(\Gamma;V_{2n}) \rightarrow H^1(\Gamma_{\infty}; V_{2n}))$, and $\Gamma_{\infty} \leq \Gamma$ is the subgroup generated by $-1, T$.  This in turn induces an isomorphism 
\begin{equation}\label{H1toS2n} 
 H^1_{\cusp}(\Gamma; V_{2n} )^+ \overset{\sim}{\To} S_{2n}^+
 \end{equation}
where the $+$ on the left-hand factor denotes invariants with respect to the action of the real Frobenius involution (\cite{MMV} \S5.4, \S7.4).
The Eichler-Shimura theorem states in particular that the integration maps gives an  isomorphism:
$$S_{2n}(\Gamma) \overset{\sim}{\To}   H^1_{\cusp}(\Gamma; V_{2n+2} )^+ \otimes \R$$
where $S_{2n}$ denotes the space of cuspidal modular forms of weight $2n$. 
\end{rem}

\subsection{Quadratic relations} \label{sectQuadrel}
Define a   vector space $K$ 
$$K   =   \ker ( \{ \ , \}: \dg^1 \wedge \dg^1 \rightarrow \dg^2)$$
to be the kernel of the Ihara bracket.  It is weight-graded in even degrees $K= \bigoplus_n K_{2n}$. Since  
$\g \subset D^1 \g$ is generated in depth 1, $D^1 \wedge D^1 \rightarrow D^2$ is surjective, and hence 
 $\dg^1 \wedge \dg^1 \rightarrow \dg^2$ is surjective, i.e.,
$$0 \To K \To \dg^1 \wedge \dg^1 \overset{\{ \ , \ \}}{\To} \dg^2 \To 0  $$
is an exact sequence.  Now embed $\g$  in $\LL(\x_0,\x_1)$ via $(\ref{gfraktoL})$, and therefore
$\dg=\gr_{D} \g$ is also embedded in $\LL(\x_0,\x_1)$ since the latter is graded for the $D$-degree.
By passing to reduced polynomial representations \S\ref{algtopoly}, we have a canonical isomorphism
\begin{eqnarray}
\dg^1  & = &  x_1^2 \Q[x_1^2] \nonumber \\
\overline{\sigma}_{2n+1} & \mapsto & x_1^{2n} \qquad   \qquad \hbox{ for } n\geq 1 .\nonumber
\end{eqnarray}  
We can thus identify $\dg^1 \otimes \dg^1  =  x_1^2 \Q[x_1^2] \otimes x_1^2 \Q[x_1^2] \cong x_1^2x^2_2 \Q[x_1^2,x_2^2]$, 
and hence   view elements of $\dg^1 \wedge \dg^1  \subset \dg^1 \otimes \dg^1 $ as antisymmetric polynomials in $x^2_1,x^2_2$.

\begin{lem}   \label{lemquadrel} The polynomial representation gives an isomorphism 
\begin{equation} \label{isomKtoS}
   K_{2n} \overset{\sim}{\rightarrow} S_{2n}^+ \ . 
   \end{equation}
\end{lem}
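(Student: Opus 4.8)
The plan is to turn the identification made just before the lemma into an explicit equality of linear conditions. Under that identification an element of $\dg^1\wedge\dg^1$ is recorded as an antisymmetric polynomial $P(x_1,x_2)$ which is even in each variable and divisible by $x_1^2x_2^2$, with $\overline{\sigma}_{2a+1}\wedge\overline{\sigma}_{2b+1}$ corresponding to $x_1^{2a}x_2^{2b}-x_1^{2b}x_2^{2a}$; the piece $K_{2n}$ consists of those $P$ of degree $2n$ lying in the kernel of the Ihara bracket. Any such $P$ already satisfies three of the four requirements of Definition~\ref{defperiodpoly}: it is homogeneous of degree $2n$, vanishes at $x_1=0$ and $x_2=0$, and is antisymmetric (it is moreover even in each variable, placing it in the ``$+$'' part). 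Thus $P\in S_{2n}^+$ if and only if it obeys the one remaining cocycle relation $P(x_1,x_2)+P(x_1-x_2,x_1)+P(-x_2,x_1-x_2)=0$, and the whole lemma reduces to showing that this relation is equivalent to $P\in K_{2n}$. Since the polynomial representation is faithful on $\gr^2_D\LL(\x_0,\x_1)\supset\dg^2$, the latter is in turn equivalent to the vanishing of the reduced representation of the bracket of $P$.

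The computational core is to evaluate that reduced representation. By $(\ref{rhoofadx0x1})$ the depth-one generator $\overline{\sigma}_{2a+1}$ has reduced representation $x_1^{2a}$, so Example~\ref{exampleofIharaactionlength2} applies verbatim with $\overline{f}(x_1)=x_1^{2a}$ and $\overline{g}(x_1)=x_1^{2b}$, expressing each $\{\overline{\sigma}_{2a+1},\overline{\sigma}_{2b+1}\}^{(2)}$ as a fixed five-term polynomial. Summing these against $\lambda_{a,b}$ and repeatedly using that $P$ is antisymmetric and even in each variable --- so that $P(x_2-x_1,\,\cdot\,)=P(x_1-x_2,\,\cdot\,)$ and $P(y,x)=-P(x,y)$ --- I expect the five terms to collapse to
\[
\sum_{a+b=n}\lambda_{a,b}\,\{\overline{\sigma}_{2a+1},\overline{\sigma}_{2b+1}\}^{(2)}(x_1,x_2)
=2\big(P(x_1,x_2)+P(x_1-x_2,x_1)+P(-x_2,x_1-x_2)\big),
\]
i.e. to twice the left-hand side of the cocycle relation. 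Granting this identity, $P$ lies in $K_{2n}$ exactly when it satisfies the cocycle relation, hence exactly when $P\in S_{2n}^+$; since the correspondence $P\mapsto P$ is the restriction of the bijective identification of $\dg^1\wedge\dg^1$ with bi-even antisymmetric polynomials, it is automatically an isomorphism $K_{2n}\overset{\sim}{\rightarrow}S_{2n}^+$.

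The only genuine obstacle is the bookkeeping in the collapse of the five terms: one must track that each substitution $x_1\mapsto x_2-x_1$ coming from Example~\ref{exampleofIharaactionlength2} is rewritten, using evenness, as the argument $x_1-x_2$ of the period relation, and that the two diagonal contributions $P(x_1,x_2)$ and $-P(x_2,x_1)$ add rather than cancel. A reassuring check is the first nontrivial case $2n=10$, where the Ihara--Takao relation $\{\sigma_3,\sigma_9\}-3\{\sigma_5,\sigma_7\}\equiv0$ corresponds to $P=-x_1^2x_2^2(x_1^2-x_2^2)^3$, which is precisely the even period polynomial attached to the weight-$12$ cusp form; thus $\dim K_{10}=\dim S_{10}^+=1$, in agreement with the claimed isomorphism.
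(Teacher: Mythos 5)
Your proposal is correct and follows essentially the same route as the paper, whose proof is exactly the one-line observation that Example \ref{exampleofIharaactionlength2} yields $\{x_1^{2a},x_1^{2b}\}=P(x_1,x_2)+P(x_2-x_1,x_1)+P(-x_2,x_1-x_2)$ for $P=x_1^{2a}x_2^{2b}-x_2^{2a}x_1^{2b}$, so that the kernel condition is literally the period-polynomial relation (using bi-evenness to identify $P(x_2-x_1,\cdot)$ with $P(x_1-x_2,\cdot)$). Your expansion of the five terms does collapse as you expect (the overall factor of $2$ coming from summing over ordered pairs is harmless), and the remaining conditions of Definition \ref{defperiodpoly} are automatic from antisymmetry and divisibility by $x_1^2x_2^2$, as you note.
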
 
\begin{proof}
This is immediate from the formula for $\circb$ given in \S\ref{sectLinIhara} (example \ref{exampleofIharaactionlength2}) 
$$\{x_1^{2a}, x_1^{2b}\} = P(x_1,x_2)  + P(x_2-x_1,x_1) + P(-x_2,x_1-x_2)  $$
where $P(x_1,x_2) = x_1^{2a}x_2^{2b}- x_2^{2a}x_1^{2b}$ and $a,b\in \N$.
\end{proof}
These quadratic relations appear in several contexts:
\begin{cor} \label{corquad}
The elements $\varepsilon_{2n+2}^{\vee}$, $\xi_{2n+1}$, $\overline{\sigma}_{2n+1}$ and $x_1^{2n}$  all satisfy the same quadratic relations in $K_{2n}$.
\end{cor}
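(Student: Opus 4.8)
The plan is to trace all four families back to Lemma \ref{lemquadrel}, which already computes the linearised Ihara bracket $\{x_1^{2a},x_1^{2b}\}$ and identifies the kernel of $(\lambda_{i,j})\mapsto\sum_{i,j}\lambda_{i,j}\{x_1^{2i},x_1^{2j}\}$ with $S_{2n}^+$. The unifying observation is that all four families have the \emph{same} reduced representation $x_1^{2n}$ in depth one (respectively $B$-degree one), and that in each model the bracket of two such elements, read off in the degree-two graded piece, is the single universal expression $\{x_1^{2i},x_1^{2j}\}$ given by the reduced formula of Example \ref{exampleofIharaactionlength2}. Once this is established, the coefficients $(\lambda_{i,j})$ producing a quadratic relation are forced to lie in $K_{2n}$ in every case.

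First I would treat $x_1^{2n}$ and $\overline{\sigma}_{2n+1}$ together. Under the polynomial isomorphism $\dg^1\cong x_1^2\Q[x_1^2]$ sending $\overline{\sigma}_{2n+1}\mapsto x_1^{2n}$, and using that $\overline{\rho}$ carries the Ihara bracket on $\gr_D\g$ to the linearised bracket on polynomials, a combination $\sum_{i,j}\lambda_{i,j}\{\overline{\sigma}_{2i+1},\overline{\sigma}_{2j+1}\}$ vanishes in $\dg^2$ if and only if $\sum_{i,j}\lambda_{i,j}\{x_1^{2i},x_1^{2j}\}=0$, which is literally the defining condition of $K_{2n}$. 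For the $\xi_{2n+1}$ I would use that $\xi_{2n+1}^{(1)}=x_1^{2n}$ together with the fact that the linearised Ihara bracket on rational functions is homogeneous for the depth grading; hence the depth-two component of $\{\xi_{2i+1},\xi_{2j+1}\}$ depends only on the depth-one parts and equals $\{x_1^{2i},x_1^{2j}\}$. Thus the leading (depth-two) quadratic relation among the $\xi$'s again vanishes exactly when $(\lambda_{i,j})\in K_{2n}$, the higher-depth components being irrelevant to the leading relation.

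For the geometric derivations I would apply Proposition \ref{propAction2}: by \eqref{betabracketeqn} the injective map $\ell'$ is a Lie homomorphism from $B^1\Der^{\Theta}\LL(a,b)$ to $(Q',\{\,,\,\})$, and by Proposition \ref{propuedbshf} it sends $\varepsilon_{2n+2}^{\vee}\mapsto x_1^{2n}$ in reduced form. Since each $\varepsilon_{2n+2}^{\vee}$ is homogeneous of $B$-degree one (its value on $a$ is $\ad(a)^{2n+2}(b)$, which has a single letter $b$), the bracket $[\varepsilon_{2a+2}^{\vee},\varepsilon_{2b+2}^{\vee}]$ is homogeneous of $B$-degree two, so $\ell'[\varepsilon_{2a+2}^{\vee},\varepsilon_{2b+2}^{\vee}]=\{x_1^{2a},x_1^{2b}\}$; injectivity of $\ell'$ then shows $\sum_{i,j}\lambda_{i,j}[\varepsilon_{2i+2}^{\vee},\varepsilon_{2j+2}^{\vee}]=0$ precisely when $(\lambda_{i,j})\in K_{2n}$. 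Translating $K_{2n}\cong S_{2n}^+$ through Lemma \ref{lemquadrel} then shows that all four families satisfy the same quadratic relations, parametrised by the even period polynomials.

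The main obstacle is purely bookkeeping: one must keep the two distinct gradings aligned—the depth filtration $D$ on $\LL(\x_0,\x_1)$ and the $B$-degree (degree in $b$) on $\LL(a,b)$—and verify that, despite the distinct denominator conventions ($\tfrac{1}{y_0-y_r}$ in $P'$ versus $c_r^{-1}$ in $Q'$) and the poles carried by the $\xi$ and $\varepsilon$ representations, no spurious cross-terms contribute to the degree-two bracket. This is guaranteed by the homomorphism properties of $\overline{\rho}$ and $\ell'$ and by the homogeneity of the linearised Ihara bracket, which together ensure that the degree-two piece in each model collapses to the single expression $\{x_1^{2i},x_1^{2j}\}$ already analysed in Lemma \ref{lemquadrel}.
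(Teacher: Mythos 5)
Your treatment of $\overline{\sigma}_{2n+1}$, $x_1^{2n}$ and $\varepsilon_{2n+2}^{\vee}$ is correct and is essentially the paper's argument, made explicit: both $\overline{\rho}$ and $\ell'$ are injective Lie homomorphisms sending the distinguished generators to the same reduced polynomial $x_1^{2n}$ and carrying the brackets to the one linearised Ihara bracket $\{\,,\,\}$, so the quadratic relations are transported to those of Lemma \ref{lemquadrel} in each case. Your closing remark about aligning the $D$- and $B$-gradings and the denominator conventions is exactly the right bookkeeping to flag.

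There is, however, a gap in your handling of the $\xi_{2n+1}$. You only prove that the \emph{depth-two component} of $\sum_{i,j}\lambda_{i,j}\{\xi_{2i+1},\xi_{2j+1}\}$ vanishes iff $(\lambda_{i,j})\in K_{2n}$, and you explicitly set aside the higher-depth components as ``irrelevant to the leading relation.'' But the corollary asserts that the $\xi$'s satisfy the same relations, meaning the full bracket vanishes for $(\lambda_{i,j})\in K_{2n}$ --- this is what the displayed example $\{\xi_3,\xi_9\}-3\{\xi_5,\xi_7\}=0$ means, and it is what is actually used later: the proof of Theorem \ref{thmccformula} invokes $\bigl(\sum_{i<j}\lambda_{i,j}\{\xi_{2i+1},\xi_{2j+1}\}\bigr)^{(r)}=0$ for all $1\leq r\leq 3$, not just $r=2$. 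Your argument gives one inclusion (any relation among the $\xi$'s forces $(\lambda_{i,j})\in K_{2n}$, by looking at the leading term) but not the converse. The missing ingredient is the paper's observation that the $\xi_{2n+1}$ are \emph{obtained from} the $x_1^{2n}$ via the Ihara bracket: by Remark \ref{rem extend}, $\xi_{2n+1}=\exp(\ad(s))\,x_1^{2n}$ with $s=(s^{(1)},s^{(2)},\ldots)$, and $\exp(\ad(s))$ is an automorphism of the Lie algebra of rational functions under $\{\,,\,\}$; hence every relation $\sum\lambda_{i,j}\{x_1^{2i},x_1^{2j}\}=0$ is carried to the identical relation $\sum\lambda_{i,j}\{\xi_{2i+1},\xi_{2j+1}\}=0$ in \emph{all} depths (alternatively, one can deduce this from the computations of \S\ref{sectDrinGeom} identifying the $\xi$'s with the descents of the $\varepsilon_{2n+2}^{\vee}$). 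With that one line added, your proof is complete and coincides with the paper's.
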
 
\begin{proof}
The polynomial representations of $\varepsilon_{2n+2}^{\vee}$ and $\overline{\sigma}_{2n+1}$ via propositions $\ref{propAction1}$ and $ \ref{propAction2}$ are both $x_1^{2n}$, and the Lie brackets correspond to the Ihara bracket $\{ ,\}$. Therefore they satisfy the identical quadratic relations. 
For the elements $\xi_{2n+1}$, this follows from their definition, because they are obtained from the $x_1^{2n}$
via the Ihara bracket, or  from the computations of \S\ref{sectDrinGeom} relating them to the $\varepsilon_{2n+2}^{\vee}$.
\end{proof} 
The existence of such quadratic relations was first observed by Ihara-Takao and has been reproved in  many ways since.
The smallest example of a period polynomial is the element $x_1^2x_2^2(x_1^2-x_2^2)^3  = x_1^8 x_2^2 -3 x_1^6 x_2^4+3 x_1^4 x_2^6-x_1^2x_2^8$.
It corresponds to the relations
\begin{eqnarray} \label{IharaRel} 
  { [}\overline{\sigma}_3, \overline{\sigma}_9] - 3 [ \overline{\sigma}_5, \overline{\sigma}_7]  =  0 \qquad \ , \    
 \qquad   \{ x_1^2, x_1^8\} - 3 \{ x_1^4, x_1^6\} =   0   \ .\nonumber   \\
{[} \varepsilon^{\vee}_{4}, \varepsilon^{\vee}_{10} ] - 3 [ \varepsilon^{\vee}_{6} , \varepsilon^{\vee}_8] =   0 \quad \,\, \ , \  \quad     
 \qquad     \{\xi_3, \xi_9\} - 3 \{ \xi_5, \xi_7\}  =  0  \ .\nonumber   
\end{eqnarray}

\subsection{Cuspidal generators in depth 4} \label{sectExotic} As explained in \cite{BrDepth}, the depth filtration on $\g$ gives rise
to a spectral sequence and in particular a differential 
$$d : H_2(\dg) \To H_1(\dg)\  . $$
Since $H_2(\dg) = \ker(\wedge^2 \dg \rightarrow \dg)/ \wedge^3 \dg$, there is a natural map 
  $K\rightarrow H_2(\dg)$. It is in fact injective since the image  of $\wedge^3 \dg $ is in depth $\geq 3$. Composing with  this map gives
   a  linear map $d: K \rightarrow (\dg^4)^{ab}$ as we explain presently, and the canonical zeta elements
defined in \S\ref{sectDrin3}  give a means to compute it explicitly.  To see this, the elements $\sigma^c_{2n+1}$ can be interpreted as  a linear map  
\begin{eqnarray} 
\sigma^c: \dg^1  &\To &  D^1 \g / D^4 \g  \nonumber \\ 
\overline{\sigma}_{2n+1}  & \mapsto &  \sigma_{2n+1}^c \nonumber
\end{eqnarray} 
which splits the natural map $D^1/D^4 \g \rightarrow D^1/D^2\g=\dg^1$. Consider
\begin{equation}\label{dgwedgedgtod2}
 \dg^1 \wedge \dg^1 \overset{\sigma^c\wedge \sigma^c}{\To} D^1/D^4 \g \wedge D^1/D^4 \g \overset{\{\ , \ \}}{\To} D^2 /D^5 \g\ .
 \end{equation}
The  subspace $K$ maps into $D^3/D^5 \g$, since its image in $D^2/D^3 = \dg^2 $ is zero. Since $K$ has even weights,  the  depth-parity theorem $\ref{corDepthParity}$ implies that
$D^3/D^4\g= \dg^3$ vanishes in even weights, and the restriction of $(\ref{dgwedgedgtod2})$ to $K$ gives  a linear map
\begin{equation} 
\label{cdef} \cc: K \To D^4/ D^5 \g = \dg^4\ .
\end{equation} 
The letter $\cc$ was chosen to stand for `cuspidal', for the following reason.
Its weight-graded components by $\cc_{2n}$ can be viewed,  via $(\ref{isomKtoS})$, 
as linear maps
$$\cc_{2n}: H_{\cusp}^1(\Gamma, V_{2n})^+ \To \dg_{2n}^4\  .$$

\begin{thm}  \label{thmccformula} Let  $P(x_1,x_2) = \sum_{i,j} \lambda_{i,j} x_1^{2i}x_2^{2j}$ be in $K$,  where $\lambda_{i,j} =-  \lambda_{j,i}$.  It gives rise to a relation
of the form
$$\sum_{i<j}  \lambda_{i,j} [ \overline{\sigma}_{2i+1}, \overline{\sigma}_{2j+1}] = 0  \quad \hbox{ in } \quad \dg^2\ .$$
Then  the image of the element $\cc(P) \in \dg^4$ in  $ \Q[x_1,x_2,x_3,x_4]$ is 
\begin{eqnarray}   \label{cPformula}
\rho^{(4)}( \cc(P)) & = &    \sum_{i,a,b}  \lambda_{i, a+b}   {B_{2a} B_{2b} \over B_{2a+2b}} \binom{2a+2b}{2a} {1 \over 24b} \{x_1^{2i}, \{x_1^{2a}, \{x_1^{2b}, x_1^{-2}\}\}\}   \\ 
 &  -  &   3  \sum_{ i } \lambda_{i,2} \{x_1^{2i},z_3 \} \nonumber 
\end{eqnarray}
where $z_3$ was defined in remark $\ref{remz_3}$.
\end{thm}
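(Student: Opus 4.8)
The plan is to evaluate $\cc(P)$ straight from its construction in \S\ref{sectExotic}, namely as the depth-$4$ component of $\Sigma := \sum_{i<j}\lambda_{i,j}\{\sigma^c_{2i+1},\sigma^c_{2j+1}\}$, which lands in $\dg^4 = D^4/D^5\g$ because its depth-$2$ part vanishes ($P\in K$) and its depth-$3$ part vanishes by the depth-parity theorem (Corollary \ref{corDepthParity}) in even weight. Writing $\sigma^{(r)}_{2n+1}$ for the depth-$r$ component of $\sigma^c_{2n+1}$ and recalling that each $\sigma^c_{2n+1}$ is supported in depths $1,2,3$, only the bidepths $(1,3),(2,2),(3,1)$ contribute to depth $4$, so that the depth-graded Ihara bracket gives
\begin{equation*}
\{\sigma^c_{2i+1},\sigma^c_{2j+1}\}^{(4)} = \{\sigma^{(1)}_{2i+1},\sigma^{(3)}_{2j+1}\} + \{\sigma^{(2)}_{2i+1},\sigma^{(2)}_{2j+1}\} + \{\sigma^{(3)}_{2i+1},\sigma^{(1)}_{2j+1}\}.
\end{equation*}
Exploiting $\lambda_{i,j}=-\lambda_{j,i}$ together with the antisymmetry of the bracket, the $(1,3)$ and $(3,1)$ terms coincide after relabelling, and, passing to reduced rational functions via \S\ref{algtopoly} and using $\sigma^{(1)}_{2i+1}=x_1^{2i}$,
\begin{equation*}
\rho^{(4)}(\cc(P)) = \sum_{i,j}\lambda_{i,j}\{x_1^{2i},\sigma^{(3)}_{2j+1}\} + \tfrac12\sum_{i,j}\lambda_{i,j}\{\sigma^{(2)}_{2i+1},\sigma^{(2)}_{2j+1}\}.
\end{equation*}

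The crucial simplification is to compare this with the polar elements $\xi_{2n+1}$. By Remark \ref{rem extend} one has $\xi_{2n+1}=\exp(\ad s)\,x_1^{2n}$ for a fixed weight-$0$ solution $s$; since the Ihara bracket satisfies the Jacobi identity, $\ad s$ is a derivation and $\exp(\ad s)$ is an automorphism, whence $\{\xi_{2i+1},\xi_{2j+1}\}=\exp(\ad s)\{x_1^{2i},x_1^{2j}\}$. As $P\in K$ says exactly that $\sum_{i<j}\lambda_{i,j}\{x_1^{2i},x_1^{2j}\}=0$ (Lemma \ref{lemquadrel}), applying the automorphism yields
\begin{equation*}
\sum_{i<j}\lambda_{i,j}\{\xi_{2i+1},\xi_{2j+1}\} = \exp(\ad s)\Big(\sum_{i<j}\lambda_{i,j}\{x_1^{2i},x_1^{2j}\}\Big) = 0
\end{equation*}
identically in all depths. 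In particular the depth-$4$ component $\cc_\xi(P)$, which is computed by the identical symmetrisation as above with every $\sigma^{(r)}$ replaced by $\xi^{(r)}$, is zero.

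Now I subtract. Since $\sigma^{(1)}_{2n+1}=\xi^{(1)}_{2n+1}=x_1^{2n}$ and $\sigma^{(2)}_{2n+1}=\xi^{(2)}_{2n+1}$, the $(2,2)$-contributions to $\rho^{(4)}(\cc(P))$ and to $\cc_\xi(P)$ are literally equal, so subtracting $\cc_\xi(P)=0$ leaves only the depth-$3$ discrepancy:
\begin{equation*}
\rho^{(4)}(\cc(P)) = \sum_{i,j}\lambda_{i,j}\{x_1^{2i},\ \sigma^{(3)}_{2j+1}-\xi^{(3)}_{2j+1}\}.
\end{equation*}
For $j\ge 2$ the explicit formula $(\ref{sigmacexpl})$ gives $\sigma^{(3)}_{2j+1}-\xi^{(3)}_{2j+1}=\sum_{a+b=j}\frac{B_{2a}B_{2b}}{B_{2j}}\binom{2j}{2a}\frac{1}{24b}\{x_1^{2a},\{x_1^{2b},x_1^{-2}\}\}$; inserting this and writing $j=a+b$ reproduces verbatim the first sum of $(\ref{cPformula})$. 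For $j=1$ the genuine element $\sigma^c_3=[\x_0,[\x_0,\x_1]]+[\x_1,[\x_0,\x_1]]$ has vanishing depth-$3$ part, while $\xi^{(3)}_3=3z_3$ by Remark \ref{remz_3}, so this one exceptional pairing supplies the residual $z_3$-term. A final residue check of the kind carried out in \S\ref{sectPolescancel} confirms that the sum is pole-free, as it must be since $\cc(P)\in\dg^4\subset\LL(\x_0,\x_1)$.

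The main obstacle is the vanishing $\cc_\xi(P)=0$: it is what collapses the unwieldy $(2,2)$-bracket and the polar part of the $(1,3)$-bracket into a single clean difference, and it relies on the $\xi_{2n+1}$ satisfying the Ihara--Takao quadratic relations not merely in depth $2$ but to all orders --- precisely the content of the exponential description $\xi_{2n+1}=\exp(\ad s)x_1^{2n}$. The second delicate point is the bookkeeping of the $j=1$ term: because $\sigma^c_3$ is defined separately as a genuine Lie polynomial rather than through the polar formula $(\ref{rhosimacdef})$, its depth-$3$ part is $0$ rather than $3z_3$, and it is exactly this mismatch that isolates the non-geometric element $z_3$ of Remark \ref{remz_3}.
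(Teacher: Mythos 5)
Your proof is correct and follows essentially the same route as the paper: expand $\sum_{i<j}\lambda_{i,j}\{\sigma^c_{2i+1},\sigma^c_{2j+1}\}$ in depth $4$, substitute $(\ref{sigmacexpl})$ to write each $\sigma^c_{2n+1}$ as $\xi_{2n+1}$ plus a depth-$3$ correction, kill the pure $\xi$-part because the $\xi_{2n+1}$ satisfy the period-polynomial relations (corollary \ref{corquad}), and read off the remaining terms, the exceptional case $\sigma^c_3$ (whose depth-$3$ part is $0$ rather than $\xi_3^{(3)}$) producing the $z_3$-term. Your explicit justification via $\xi_{2n+1}=\exp(\ad(s))x_1^{2n}$ that the $\xi$-relation vanishes in depth $4$ --- not merely in depths $\leq 3$, which is all the paper's proof states but is not quite what is needed --- is a welcome sharpening of a point left implicit there; note also that your bookkeeping attaches the $z_3$-term to the coefficients $\lambda_{i,1}$ (the pairings with $\overline{\sigma}_3$), as weight-homogeneity requires, rather than to the $\lambda_{i,2}$ appearing in the printed formula $(\ref{cPformula})$.
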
 
\begin{proof} The element $\cc(P)$ is by definition
$$\cc(P) = \sum_{i<j}  \lambda_{i,j} \,\{ \sigma^c_{2i+1},  \sigma^c_{2j+1}\}   \pmod{D^5}$$
Now substitute the expressions $(\ref{sigmacexpl})$  for  $\sigma_{2j+1}^c$ in terms of the polar elements $\xi_{2a+1}$ (work in $\Q(x_1,\ldots, x_4)$).
By corollary $\ref{corquad}$, the $\xi_{2a+1}$ satisfy the relations
$$ \Big(  \sum_{i<j}  \lambda_{i,j} \,\{ \xi_{2i+1},  \xi_{2j+1}\} \Big)^{(r)}  =0  \ $$ 
for $1\leq r \leq 3$, where  a superscript $(r)$ denotes the depth $r$ component.
The theorem follows from formula $(\ref{sigmacexpl})$, together with the  definition of the element $z_3 =  -{1\over 3} \xi_3^{(3)}$.
 \end{proof}

If one believes the Broadhurst-Kreimer conjecture, one is led to the following\begin{conj}  \label{conj1}  (Broadhurst-Kreimer: compare with \cite{BrDepth}, \S9)
\begin{eqnarray} \label{homologyconj}
H_1(\dg; \Q) & \cong & \bigoplus_{n \geq 1} \overline{\sigma}_{2n+1} \Q \oplus    \cc(K)  \\
H_2(\dg; \Q) & \cong &K \nonumber \\
H_i(\dg; \Q) & =  & 0  \quad \hbox{for all} \quad  i \geq 3\ .\nonumber 
\end{eqnarray}
\end{conj}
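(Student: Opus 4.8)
The final statement is the motivic form of the Broadhurst--Kreimer conjecture, so a complete proof is beyond reach; I describe the strategy one would follow and the point at which it necessarily stalls. The organising principle is the spectral sequence of the depth filtration. By Theorem~\ref{thmDIconj} the Lie algebra $\g$ is free on the $\sigma_{2n+1}$, so its Chevalley--Eilenberg homology is concentrated in a single degree: $H_1(\g)=\bigoplus_{n\geq1}\Q\,\sigma_{2n+1}$ and $H_i(\g)=0$ for $i\geq2$. The depth filtration $D$ filters the complex $\wedge^\bullet\g$, and its associated graded is the Chevalley--Eilenberg complex of $\dg=\gr_D\g$; one thus obtains a spectral sequence
\[
E^1=H_\bullet(\dg)\ \Longrightarrow\ H_\bullet(\g)\ .
\]
The freeness of $\g$ is then an extremely rigid constraint: every class of $H_{\geq2}(\dg)$, and every class of $H_1(\dg)$ of depth $\geq2$, must be annihilated by the differentials. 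Conjecture~\ref{conj1} asserts that this happens in the minimal possible way, through the single differential $d=\cc\colon K\to\dg^4$ of $(\ref{cdef})$: the relations of $\dg$ are exactly the quadratic ones $K\subset\wedge^2\dg^1$, the only generators beyond depth one are the images $\cc(K)$, and there are no relations-among-relations.

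First I would assemble the provable, ``lower bound'' half. The quadratic relations $K\cong\bigoplus_n S_{2n}^+$ genuinely hold in $\dg^2$ by Lemma~\ref{lemquadrel} and Corollary~\ref{corquad}; the classes $\overline\sigma_{2n+1}\in\dg^1$ and $\cc(K)\subset\dg^4$ are produced explicitly, and Theorem~\ref{thmccformula} computes the differential $\cc$ in closed form, so the right-hand sides of $(\ref{homologyconj})$ all inject into the relevant homology. In depths $d\leq3$ the conjecture is already a theorem: the dimension counts of Zagier (\cite{Za}, $d=2$) and Goncharov (\cite{G2}, $d=3$), combined with the isomorphism $\dg^d\cong\ls^d$ of Theorem~\ref{thmsigmasaremotivic} and the vanishing of $\dg^3$ in even weight from Corollary~\ref{corDepthParity}, determine $\dg^1,\dg^2,\dg^3$ exactly and show they are generated by $\dg^1$ subject only to $K$. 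One would then verify that the predicted homology is consistent with the expected size of $\dg$ via the Euler--characteristic (Tor) identity for the enveloping algebra,
\[
\mathrm{Hilb}(U\dg)(s,t)^{-1}=\sum_{i\geq0}(-1)^i\,\mathrm{Hilb}\,H_i(\dg)(s,t)=1-\mathbb{O}(s)\,t+\mathbb{S}(s)\,t^2-\mathbb{S}(s)\,t^4\ ,
\]
where $\mathbb{O}(s)=\sum_{n\geq1}s^{2n+1}$ enumerates the depth-one generators and $\mathbb{S}(s)=\sum_n(\dim K_{2n})\,s^{2n}$ the period-polynomial classes, which occur in $H_2$ in depth $2$ and, via $\cc$, in $H_1$ in depth $4$. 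The resulting generating function for $\dim\dg^d_n$ is precisely the Broadhurst--Kreimer series, so the conjecture passes the numerical consistency test.

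The decisive and genuinely open part is the ``upper bound'': showing that $\dg$ has \emph{no} generators beyond depth four, \emph{no} relations beyond $K$, and that $H_{\geq3}(\dg)=0$. In spectral-sequence terms one must prove that the single differential $\cc$ already kills everything that freeness of $\g$ forces to die, i.e.\ that the sequence degenerates after $d=\cc$. Nothing in the present theory controls $\dg$ from above in high depth: the inclusions $\g\subset\dmr_0\subset\ls$ point the wrong way for an upper bound, and $\dim\ls^d$ is unknown for $d\geq4$. The most promising alternative is to transport the problem to the elliptic side, using the injection $\dg\hookrightarrow\gr_B\Der^\Theta\LL(a,b)$ of Corollary~\ref{corBcutout} and Proposition~\ref{propuedbshf} to realise both the relations and the cuspidal generators inside $\ue$ and $\mathfrak{pls}$; but this merely trades Conjecture~\ref{conj1} for the equally open equality $\mathfrak{pls}=\ue$ (known only in depths $\leq3$) together with an analogous degeneration statement for the geometric derivations. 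I therefore expect the crux to be the construction of an explicit small free resolution of $\dg$---exhibiting enough syzygies to force $H_{\geq3}=0$ directly---rather than any sharpening of the dimension counts, which at present only confirm the conjecture weight by weight in low depth.
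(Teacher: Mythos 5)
This statement is a conjecture and the paper offers no proof of it; your proposal rightly refrains from claiming one and reconstructs precisely the framework the paper itself uses to motivate it --- the depth-filtration spectral sequence on the free Lie algebra $\g$, the single differential $\cc\colon K \to \dg^4$ of \S\ref{sectExotic} computed by Theorem~\ref{thmccformula}, the low-depth verification via Zagier and Goncharov through Theorem~\ref{thmsigmasaremotivic} and the depth-parity theorem, the consistency of the homology with the Broadhurst--Kreimer generating series, and the equivalent elliptic formulation of \S\ref{sectQBK} (your only slip is cosmetic: freeness of $\g$ is prior input from \cite{BrMTZ} and Borel's computations rather than a consequence of Theorem~\ref{thmDIconj}, which is the injectivity of $i$). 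Your identification of the genuinely open ``upper bound'' --- degeneration of the spectral sequence after the single differential $\cc$, i.e.\ no generators beyond depth four, no relations beyond $K$, and $H_{\geq 3}(\dg)=0$ --- matches the paper's own assessment of where the conjecture stands.
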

 Thus $\dg$  admits the following  conjectural presentation. It has   generators  the $\overline{\sigma}_{2n+1}$ in depth 1 for $n \geq 1$ together with cuspidal elements $\cc(K)$ in depth 4. The only relations are   the quadratic  relations   of \S\ref{sectQuadrel}.

\begin{rem}
As noted  in \cite{EL}, $H_3(\dg;\Q)=0$ implies that $H_i(\dg;\Q)=0$ for all $i\geq 3$. 
In fact, for any pro-nilpotent Lie algebra  $\mathfrak{g}$ over a field $k$ of characteristic zero, $H_i(\mathfrak{g},k)=0$ implies that
$H_n(\mathfrak{g},k)=0$ for all $n\geq i$. To see this, note that  since $\mathfrak{g} $ is a projective limit of  finite-dimensional nilpotent Lie algebras, and (co)homology commutes with limits,  we can assume $\mathfrak{g}$  nilpotent and  $H^i(\mathfrak{g},k)=0$.  Every finite-dimensional $\mathfrak{g}$-module $M$  has an increasing filtration by submodules   $M_m \subset M$ such that the associated graded is a trivial module.
By the long exact cohomology sequence and induction on $m$,   $H^i(\mathfrak{g};M)=0$ for all such $M$.  Now interpret $H^n(\mathfrak{g};M)$
as the Ext group  $\mathrm{Ext}^n(k, M) $ in the category of $U \mathfrak{g}$-modules, and use the well-known fact that if $\mathrm{Ext}^i(k,M)$ vanishes for 
all $M$ then it also vanishes for all $n\geq i$.  
\end{rem}

The conjecture given in \cite{BrDepth}  involved certain exceptional generators denoted $\e_f$, for $f \in P$,  in the depth 4 component of the  larger Lie algebra $\gr^4_{D} \dmr$ of double shuffle equations. It is not known if they  are in  the image of  $\dg^4$.   Thus the formulation
 $(\ref{homologyconj})$ eliminates part of the conjecture given in \cite{BrDepth}.

\subsection{Remarks on the role of $z_3$}
The element $z_3$ is the first of a sequence $z_{2n+1}$ of derivations in $\Der^{\Theta} \, \LL(a,b)$ which 
are $\mathfrak{sl}_2$-invariant and  well-defined modulo $(\ue)^{\mathfrak{sl}_2}$.  It follows from theorem 10.1 in  \cite{MMV} that their action on the derivations $\varepsilon_{2k+2}^{\vee}$ are
known explicitly modulo  Lie brackets involving at least three $\varepsilon_{2n+2}^{\vee}$, with $n\geq 0$. 
It is possible that this computation can  be extended to the next order, which in particular would give  a  formula for $\{z_3, x_1^{2n}\}$ 
for all $n\geq 1$.
\begin{rem}  In \cite{BrDepth} we defined an injective linear map 
$$\overline{\e}: P \To \ls^4$$
from the space $P$ of even period polynomials to the space of solutions $\ls^4$ to the  linearised double shuffle equations in depth 4.  It only depends on the 
functional equations satisfied by elements of $P$.  It is natural to extend this linear map to the polynomials $x_1^{2n} -x_2^{2n} \in V_{2n}$,
which correspond to  coboundaries  under the morphism  $(\ref{Z1topolys})$. 
 Since they  satisfy the same functional equations as elements of $P$, they  define elements of $\mathfrak{pls}^4$ which have poles.  
One easily  verifies from the definitions that:
\begin{equation} 
\overline{\e}(x_1^{2n}-x_2^{2n}) + \{z_3, x_1^{2n-2}\}=0 \ . 
\end{equation}
This gives a different interpretation of the role of $z_3$ in formula $(\ref{cPformula})$.  
\end{rem}

\subsection{Elliptic interpretation of the Broadhurst-Kreimer conjecture} \label{sectQBK}
We can transpose the previous conjecture into the Lie algebra $\Der^{\Theta}\,  \LL(a,b)$ as follows. Recall that the map
$i_1 : \g \rightarrow \Der^{\Theta} \,  \LL(a,b)$ $(\ref{i1def})$  is injective.  Since  $B$ cuts out the depth filtration on the image $i_1(\g)$  (corollary \ref{corBcutout})  we obtain an injective  morphism
$$i_1: \dg \rightarrow \gr_B\, \Der^{\Theta} \, \LL(a,b)\ . $$
We wish to describe the conjectural generators in $B$-degrees $1$ and $4$. 
For simplicity, we shall use the heretical normalisations $\underline{\varepsilon}_{2n}$ to simplify the statement. This has the side-effect  of rescaling the period polynomial relations.

 More precisely,
consider linear map 
\begin{eqnarray}
 \Q[x_1^2,x^2_2]  &\To & \Q[x_1^2,x_2^2] \nonumber  \\
 x_i^{2n} & \mapsto &  {(2n)! \over B_{2n}} x_i^{2n} \quad \hbox{ for } n\geq 1\nonumber  
 \end{eqnarray}
and let $\underline{K}$ denote the image of $K$.  Lemma \ref{lemquadrel}  and corollary \ref{corquad} imply that 
$P = \sum_{i,j} \lambda_{i,j} x_1^{2i}x_2^{2j} \in \underline{K} $
where $\lambda_{i,j} + \lambda_{j,i}=0$,  if and only if 
$$\sum_{i<j} \lambda_{i,j} \{\underline{\varepsilon}^{\vee}_{2i+2} , \underline{\varepsilon}^{\vee}_{2j+2} \} =0\ . $$  
Define, for all $P \in \underline{K}$, elements
$$\cc(P) = \sum_{i<j} \lambda_{i,j} \{\underline{\sigma}^c_{2i+1},  \underline{\sigma}^c_{2j+1}\}  \in  \gr^4_B \, \Der^{\Theta}\, \LL(a,b)\ ,$$
 and let $z_3 \in \gr^3_B \Der^{\Theta} \, \LL(a,b)$ denote  the derivation such that $\ell'(z_3)$ is the element $(\ref{remz_3})$. 
\begin{thm} For any $P \in \underline{K}$, 
$$\cc(P) =\sum_j \lambda_{2,j} \big[ z_3, \underline{\varepsilon}^{\vee}_{2j+2}
\big]  +  \sum_{i,a,b}  \lambda_{i, a+b}  {1 \over 2b} \big[ \underline{\varepsilon}^{\vee}_{2i+2}, \big[ \underline{\varepsilon}^{\vee}_{2a+2}, \big[ \underline{\varepsilon}^{\vee}_{2b+2} , 
\underline{\varepsilon}^{\vee}_0\big] \big]\big] \  .$$
\end{thm}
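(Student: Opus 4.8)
The plan is to substitute the expansion of Theorem~\ref{thmsigmasepsilons} into the definition $\cc(P)=\sum_{i<j}\lambda_{i,j}\{\underline{\sigma}^c_{2i+1},\underline{\sigma}^c_{2j+1}\}$ and to read off the component in $\gr^4_B\,\Der^{\Theta}\LL(a,b)$. Since $i_1$ is an injective morphism of Lie algebras and $\cc(P)$ is built from the derivation bracket, the whole computation takes place inside $\gr_B\,\Der^{\Theta}\LL(a,b)$. By Theorem~\ref{thmsigmasepsilons}, for $m\geq 2$ one has $\underline{\sigma}^c_{2m+1}\equiv \underline{\varepsilon}^{\vee}_{2m+2}+D_m \pmod{B^4}$, where the \emph{head} $\underline{\varepsilon}^{\vee}_{2m+2}$ lies in $B$-degree $1$ and the \emph{tail} $D_m=\sum_{a+b=m}\frac{1}{2b}[\underline{\varepsilon}^{\vee}_{2a+2},[\underline{\varepsilon}^{\vee}_{2b+2},\underline{\varepsilon}^{\vee}_0]]$ lies in $B$-degree $3$; for $m=1$ the tail is instead the non-geometric element $z_3$ (suitably normalised), by the $n=1$ clause of the theorem. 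The elements $\underline{\sigma}^c_{2m+1}$ have no $B$-degree-$2$ component, by the depth-parity Corollary~\ref{corDepthParity}, so the bracket $\{\underline{\sigma}^c_{2i+1},\underline{\sigma}^c_{2j+1}\}$ decomposes, modulo $B^5$, into a head-head part in degree $2$ and two head-tail parts in degree $4$, the tail-tail part lying in degree $6$.

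First I would show that the head-head part contributes nothing. Applying the injective Lie homomorphism $\ell'$ of~(\ref{betabracketeqn}) gives $\ell'([\underline{\varepsilon}^{\vee}_{2i+2},\underline{\varepsilon}^{\vee}_{2j+2}])=\{x_1^{2i},x_1^{2j}\}$ up to normalisation, and by Corollary~\ref{corquad} the $x_1^{2n}$ obey exactly the quadratic relations defining $\underline{K}$; hence $\sum_{i<j}\lambda_{i,j}\{x_1^{2i},x_1^{2j}\}=0$. As $\ell'$ is injective, $\sum_{i<j}\lambda_{i,j}[\underline{\varepsilon}^{\vee}_{2i+2},\underline{\varepsilon}^{\vee}_{2j+2}]=0$ identically in $\Der^{\Theta}\LL(a,b)$, not merely in the associated graded. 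Consequently
$$\cc(P)=\sum_{i<j}\lambda_{i,j}\bigl([\underline{\varepsilon}^{\vee}_{2i+2},D_j]+[D_i,\underline{\varepsilon}^{\vee}_{2j+2}]\bigr),$$
and, using $\lambda_{i,j}=-\lambda_{j,i}$ together with the antisymmetry of the bracket, I would fold the two families of cross terms into the single sum $\sum_{i,j}\lambda_{i,j}[\underline{\varepsilon}^{\vee}_{2i+2},D_j]$.

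Next I would split this sum according to $j\geq 2$ and $j=1$. For $j\geq 2$, inserting the geometric tail $D_j$ and relabelling $j=a+b$ reproduces verbatim the iterated-bracket sum $\sum_{i,a,b}\lambda_{i,a+b}\frac{1}{2b}[\underline{\varepsilon}^{\vee}_{2i+2},[\underline{\varepsilon}^{\vee}_{2a+2},[\underline{\varepsilon}^{\vee}_{2b+2},\underline{\varepsilon}^{\vee}_0]]]$, which is the second sum in the statement. The terms with $j=1$ feed in the tail of $\underline{\sigma}^c_3$, which is $z_3$, and produce a contribution proportional to $\sum_i\lambda_{i,1}[\underline{\varepsilon}^{\vee}_{2i+2},z_3]$.

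The main obstacle is this $z_3$ term. It is forced on us because $\sigma^c_3$ is defined as an honest pole-free element rather than through the $\xi$-formula, so the triple-bracket expression degenerates at $m=1$ and its place is taken by the non-geometric correction $z_3$ of Remark~\ref{remz_3}. To rewrite this contribution in the period-polynomial form appearing in the statement, I would use the quadratic relation satisfied by $P$ together with the coboundary identity relating $\overline{\e}(x_1^{2n}-x_2^{2n})$ and $\{z_3,x_1^{2n-2}\}$ recorded in the remarks on $z_3$, which expresses each $[z_3,\underline{\varepsilon}^{\vee}_{2k+2}]$ through the functional equations of even period polynomials. Matching the outcome against the depth-$4$ formula of Theorem~\ref{thmccformula}, transported to the elliptic side by the dictionary $x_1^{2m}\leftrightarrow\underline{\varepsilon}^{\vee}_{2m+2}$, $x_1^{-2}\leftrightarrow\underline{\varepsilon}^{\vee}_0$, $z_3\leftrightarrow z_3$ and $\{\,,\,\}\leftrightarrow[\,,\,]$ set up in \S\ref{sectDrinGeom}, then pins down the exact coefficient and index of the $z_3$ term and completes the proof. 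The conversions between the heretical and standard Bernoulli normalisations are the same routine bookkeeping already performed in~(\ref{sigmacexpl}), so I expect them to be unproblematic; the delicate point is entirely the treatment of $z_3$.
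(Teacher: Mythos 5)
Your proposal is correct in substance and follows the route the paper itself intends: substitute the expansion of Theorem~\ref{thmsigmasepsilons} into $\cc(P)=\sum_{i<j}\lambda_{i,j}\{\underline{\sigma}^c_{2i+1},\underline{\sigma}^c_{2j+1}\}$, kill the head--head part in $B$-degree $2$ using the quadratic relation that defines $\underline{K}$ (equivalently, Corollary~\ref{corquad} plus injectivity of $\ell'$), and collect the head--tail cross terms in $B$-degree $4$; this is precisely the elliptic transcription of the paper's proof of Theorem~\ref{thmccformula}, where the same cancellation is performed with the $\xi_{2a+1}$ in place of the $\underline{\varepsilon}^{\vee}_{2a+2}$. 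Two small corrections. First, the claim that $i_1(\underline{\sigma}^c_{2m+1})$ has no $B$-degree-$2$ component does not follow from the depth-parity Corollary~\ref{corDepthParity}, which concerns $\gr^2_D\g$ and not the degree-$2$ coefficients of a depth-$1$ element; it is, however, already asserted by Theorem~\ref{thmsigmasepsilons} itself, whose right-hand side contains only $B$-degrees $1$ and $3$, so nothing is lost. Second, the detour through the coboundary identity for $\overline{\e}(x_1^{2n}-x_2^{2n})$ is unnecessary: once the cross terms are folded into $\sum_{i,j}\lambda_{i,j}\bigl[\underline{\varepsilon}^{\vee}_{2i+2},t_j\bigr]$, the index $j$ labelling $\sigma^c_3$ contributes $\bigl[\underline{\varepsilon}^{\vee}_{2i+2},z_3\bigr]$-terms directly, and after using $\lambda_{i,j}=-\lambda_{j,i}$ and antisymmetry of the bracket this is already the first sum of the statement; the only remaining bookkeeping is the normalisation of $z_3$ coming from the $n=1$ clause of Theorem~\ref{thmsigmasepsilons} (and the heretical rescaling implicit in $\underline{K}$), exactly as in the paper's identification $z_3=-\tfrac{1}{3}\xi_3^{(3)}$ in the proof of Theorem~\ref{thmccformula}, not a period-polynomial argument.
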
 

The Broadhurst-Kreimer conjecture suggests the following:
\begin{conj} (Elliptic (geometric) Broadhurst-Kreimer conjecture)
\begin{eqnarray} \label{homologyconj}
H_1(\dg, \Q) & \cong &  \bigoplus_{n\geq 1}  \underline{\varepsilon}^{\vee}_{2n+2} \Q  \oplus \cc(\underline{K})  \\
H_2(\dg, \Q) & \cong &  \underline{K} \nonumber \\
H_i(\dg, \Q) & =  & 0  \quad \hbox{for all} \quad  i \geq 3\ .\nonumber 
\end{eqnarray}
\end{conj}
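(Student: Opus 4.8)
The plan is to first reduce the elliptic statement to Conjecture~\ref{conj1} over $\Pro^1\backslash\{0,1,\infty\}$, then to organise that statement as a presentation problem whose crux is the vanishing $H_3(\dg)=0$. For the reduction, recall from \S\ref{sectQBK} (using Corollary~\ref{corBcutout}) that $i_1$ induces an injective morphism of bigraded Lie algebras $i_1:\dg\hookrightarrow\gr_B\,\Der^{\Theta}\,\LL(a,b)$ carrying the depth filtration to the $B$-filtration. Hence $H_\ast(\dg)$ is intrinsic and unaffected by $i_1$. Under this embedding the depth-$1$ part $\overline{\sigma}_{2n+1}$ corresponds to $\underline{\varepsilon}^{\vee}_{2n+2}$ (Propositions~\ref{propAction1} and~\ref{propAction2}, together with the case $n\geq 2$ of Theorem~\ref{thmsigmasepsilons}), and by Corollary~\ref{corquad} the two families satisfy identical quadratic relations, so $K\cong\underline{K}$ and the cuspidal classes match. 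The theorem preceding the conjecture shows moreover that $\cc(\underline{K})$ lands in $\gr^4_B$ with the asserted bracket shape. Thus the elliptic homology statement is \emph{equivalent} to Conjecture~\ref{conj1} for $\dg$, and it suffices to prove the latter.

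For the presentation framework, $H_1(\dg)$ is the space of minimal generators of the bigraded Lie algebra $\dg$ and $H_2(\dg)$ is its space of minimal relations, while by the remark following Conjecture~\ref{conj1} the single vanishing $H_3(\dg)=0$ already forces $H_i(\dg)=0$ for all $i\geq 3$. The Chevalley--Eilenberg Euler characteristic, in the bigraded power series ring with $s$ marking depth and $t$ marking weight, gives
\[
\prod_{n,d}\bigl(1-s^{d}t^{n}\bigr)^{\dim\dg^{d}_{n}}\;=\;\sum_{i\geq 0}(-1)^{i}\,\mathrm{ch}\,H_{i}(\dg)\;=\;\mathrm{ch}(U\dg)^{-1},
\]
so that the three displayed isomorphisms identify the middle term with $1-\mathbb{O}(t)s+\mathbb{S}(t)s^{2}-\mathbb{S}(t)s^{4}$, where $\mathbb{O}$ is the generating series of the odd generators and $\mathbb{S}$ that of the even period polynomials $\bigoplus_n S^+_{2n}$. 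Consequently the conjecture is equivalent to the conjunction of the three homological identities and, as a byproduct, delivers the numerical Broadhurst--Kreimer dimensions. I would establish the identities in the order $H_1$, then $H_2$, then $H_3$.

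In the low-depth range the first two are accessible. In depth $1$ the generators $\underline{\varepsilon}^{\vee}_{2n+2}$ are automatically indecomposable; there are no indecomposables in depths $2,3$ because Zagier's and Goncharov's dimension theorems (those invoked in Theorem~\ref{thmsigmasaremotivic}) give $\dg^2=\{\dg^1,\dg^1\}$ and $\dg^3=\{\dg^1,\dg^2\}$. The depth-$4$ indecomposables are exactly $\cc(\underline{K})$: the map $\cc$ is well defined since $\underline{K}$ is in even weight and $\dg^3$ vanishes there by Corollary~\ref{corDepthParity}, it is injective by the explicit formula of the preceding theorem (its $z_3$-free leading term is a nonzero lowest-weight combination of quadruple $\varepsilon$-brackets), and surjectivity onto $\dg^4\big/\bigl([\dg^1,\dg^3]+[\dg^2,\dg^2]\bigr)$ is a depth-$4$ dimension count against $\dim S^+_{2n}$. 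For $H_2$, Lemma~\ref{lemquadrel} identifies the depth-$2$ relations with $\underline{K}\cong\bigoplus_n S^+_{2n}$ via the exact sequence $0\to K\to\dg^1\wedge\dg^1\to\dg^2\to 0$, and Goncharov's depth-$3$ theorem rules out further minimal relations through depth $3$.

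The hard part, and the genuinely open heart of the matter, is $H_3(\dg)=0$ in \emph{all} depths, which together with the absence of exotic high-depth generators and relations is equivalent to the full Broadhurst--Kreimer conjecture. The natural strategy is a Koszul-type argument: realise $\dg$ as an extension of the quadratically presented Lie algebra generated in depth $1$ modulo the relations $\underline{K}$ by the free depth-$4$ cuspidal generators, prove that the enveloping algebra of the quadratic part is Koszul (so its higher homology vanishes), and check that adjoining the cuspidal generators -- which by the previous paragraph contribute only to $H_1$ -- creates no new syzygies. I expect the decisive obstacle to be controlling relations and syzygies in arbitrary depth: the explicit $\sigma^c_{2n+1}$ and the formula for $\cc(\underline{K})$ pin everything down only through depth $4$, whereas Koszulity is a statement about all depths simultaneously. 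The $\mathfrak{sl}_2$-structure on $\mathfrak{pls}$ (Proposition~\ref{propuedbshf}) and Pollack's relations among the $\varepsilon^{\vee}_{2n}$ are the most promising organising tools, but turning them into a proof that $\underline{K}$ forms an inert (strongly free) sequence, and hence that $H_3$ vanishes, lies beyond the methods developed here.
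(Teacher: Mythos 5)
The statement you are addressing is a conjecture, and the paper contains no proof of it: its entire justification is the closing remark that the elliptic formulation is equivalent to Conjecture \ref{conj1} via \S\ref{sectDrinGeom} --- that is, via injectivity of $i_1$, Corollary \ref{corBcutout} (the $B$-filtration cuts out the depth filtration on $i_1(\g)$), the identification of the $B$-degree-one image of $\underline{\sigma}_{2n+1}$ with $\underline{\varepsilon}^{\vee}_{2n+2}$, the matching of quadratic relations (Corollary \ref{corquad}), and the elliptic formula for $\cc(P)$. Your first paragraph is exactly this reduction, so on the only point where the paper commits itself you follow the same route, and correctly. Everything after that is a program rather than a proof, as you yourself concede by placing $H_3(\dg)=0$ beyond reach; this is the honest assessment, since that vanishing in \emph{all} depths is the open heart of the Broadhurst--Kreimer conjecture. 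Your Euler-characteristic bookkeeping and the depth $\leq 3$ verifications (Zagier in depth $2$, Goncharov in depth $3$, as in the proof of Theorem \ref{thmsigmasaremotivic}) are sound.

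However, two steps that you present as \emph{established} are in fact open, and this is a genuine gap even within the partial claims. First, injectivity of $\cc$ does not follow from Theorem \ref{thmccformula}: formula $(\ref{cPformula})$ is an identity in the polar ring $\Q(x_1,\ldots,x_4)$, and whether the resulting class is nonzero in $(\dg^4)^{\mathrm{ab}}$ --- i.e.\ not expressible through brackets of lower-depth motivic elements, given Pollack's relations among the $\varepsilon^{\vee}_{2n+2}$ and the unknown brackets $\{z_3, x_1^{2n}\}$ --- is precisely item (1) of the list of related problems at the end of \S\ref{sectCuspelements}. Your appeal to a ``nonzero lowest-weight leading term'' does not settle this, because nonvanishing as a rational function does not preclude decomposability in $\dg^4$. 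Second, your surjectivity claim for the depth-$4$ indecomposables ``by a dimension count against $\dim S^+_{2n}$'' presupposes knowledge of $\dim \dg^4_n$, which is not known --- it is part of the conjecture itself; the paper explicitly notes that it is unknown whether the exceptional elements $\e_f \in \ls^4$ of \cite{BrDepth} lie in the image of $\dg^4$, which is exactly the obstruction to such a count. So the computation of $H_1$ through depth $4$, which you describe as accessible, is in reality established only through depth $3$; the concluding Koszulity sketch is a reasonable heuristic but, as you say, lies beyond the methods of the paper.
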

Note that $\underline{K}$ in the above is interpreted as the space of quadratic relations between the $ \underline{\varepsilon}^{\vee}_{2n+2}$, for $n \geq 1$.
This conjecture is equivalent to conjecture $(\ref{conj1})$ by \S\ref{sectDrinGeom}.

\subsection{Some related problems}
\begin{enumerate}
\item Show that the map $\cc(P): K \rightarrow (\dg^4)^{\mathrm{ab}}$ is injective. 
\item Relate  the elements $\cc(P)$ to the exceptional elements  $\e_f$ defined in \cite{BrDepth}.
\item Construct a basis of the space of motivic periods of $\MT(\Z)$ of motivic depth $2$ and even weight  out of motivic multiple zeta values of depth $\leq 4$, using the formula for the $\sigma^c_{2n+1}$ modulo depth $4$.
\end{enumerate}

\section{Some  motivation from the relative completion of $\SL_2(\Z)$} \label{sectfinal} 
I shall very briefly sketch how I arrived at  formula $(\ref{introexplicitforsigma})$ and $(\ref{introexplicitsigmaasepsilons})$ by considering double integrals of Eisenstein series.
  This explains why the coefficients for the explicit formula for the $\sigma_{2n+1}^c$ involve the odd period polynomials of Eisenstein series.

\subsection{} 
Denote the Hecke-normalised  Eisenstein series of weight $2k\geq 4$   by 
\begin{equation}\nonumber
E_{2k} (q) = - {B_{2k} \over 4k} + \sum_{ n \geq 0} \sigma_{2k-1}(n) q^n \ ,
\end{equation}
where  $\sigma_k(n)$ denotes the divisor function.
For any modular form $f(\tau)$ of weight $2k\geq 4$ for $\SL_2(\Z)$ we shall write (see \cite{MMV} for further details):
\begin{equation} \label{underlinefdefinition}
\underline{f}(\tau) = (2\pi i)^{2k-1} f(\tau) (X- \tau Y )^{2k-2} d \tau\ 
\end{equation} 
where $q = \exp(2 i \pi \tau)$. 
It is to be viewed as a global section of $V_{2k-2} \otimes \Omega^1_{\HH}$ over the upper-half plane $\HH$. In \cite{MMV}, \S5, we defined
regularised iterated integrals of Eisenstein series between cusps. Consider the double integrals:
\begin{equation} \label{doubleeis} 
\int_0^{\infty}  \underline{E}_{2m+2}(\tau)   \underline{E}_{2n+2}(\tau) \quad   \in \quad  V_{2m} \otimes V_{2n} \otimes \C
\end{equation}
along the geodesic path from $0$ to $\infty$ (suitably interpreted as the path $S$ from $\tone_{\infty}$, the unit tangential base point at the cusp to itself).
For each $k\geq 0$, there is a canonical morphism   of $\SL_2$-representations  (\cite{MMV}, \S2.4)
$$\partial^{k} : V_{2m} \otimes V_{2n} \To V_{2m+2n-2k}\ .$$
In this way the imaginary part of the  image of  $(\ref{doubleeis})$ under $\partial^1$ defines a homogeneous polynomial in $\R[X,Y]$ of degree $2m+2n-2$ whose coefficients can be described explicitly.  The method described in \cite{MMV}, \S11, computes this polynomial as the Petersen inner product of 
two (real analytic) modular forms. The part we are interested, by the unfolding method,  corresponds to the convolution   of two Eisenstein series, and yields a certain multiple of an odd zeta value. One knows that  the ratios of these coefficients  are the odd period polynomials of Eisenstein series.

First of all, we give the precise technical statement about the periods of double Eisenstein series, and then explain how this relates to the image of 
$\g$ in $\Der^{\Theta} \, \LL(a,b)$.

\subsection{Precise statement}  All the notation in this section is  borrowed from \cite{MMV}, \S11.
  Let $k\geq 1$ be odd, $a,b\geq 2$, and $w=2a+2b-2k-2$.  Set 
\begin{equation}
\widetilde{I}^k_{2a,2b}=  I^k_{2a,2b} + \delta^0 \partial^k ( \overline{v}_{2a} \cup \bbf_{2b} - \bbf_{2a} \cup \overline{v}_{2b})
\end{equation}
where $\delta^0$ is the boundary for  $0$-cochains,  and  for all $k\geq 2$, 
$$\overline{v}_{2k} = (2 \pi i)^{2k-1} v_{2k}$$
where $v_{2k}$ was defined in \cite{MMV}, $(10.7)$. 
Then I claim that $\widetilde{I}^k_{2a,2b}$ is cocycle  for $\SL_2(\Z)$  which vanishes on $T\in \SL_2(\Z)$. Then it satisfies 
\begin{equation}\label{IkpariedwithEis}
   \{ i\,   \widetilde{I}^k_{2a,2b}, e^0_w \}  =  6 (2 \pi i)^{-w-1} C^k_{a,b} \, \zeta(k+1) \zeta(2a-k-1)\zeta(2b-k-1)\, \zeta(k+w) \end{equation} 
 where $e^0_{w}$ the rational Eisenstein cocycle defined in \cite{MMV} \S7.3 and 
 $$C^k_{a,b} =   k! (2a-2)! (2b-2)!  (k+w-1)! . $$
The equation  $(\ref{IkpariedwithEis})$ can be written, using Euler's formula, in terms of  a product of three bernoulli numbers and a  single odd zeta value.
Note that we only require the case $k=1$ here. 
The proof is essentially the same as in \cite{MMV}  with  modifications to account for divergences.

\subsection{Putting the pieces together} We sketch the main ingredients. 
Let $G_B^{\rel}$ denote the  completion of $\SL_2(\Z)$ relative to its inclusion into $\SL_2(\Q)$. It acts on $\PP$ $(\ref{PPdefn})$ through a quotient, 
called the `Eisenstein quotient' $G_B^{\eis}$  by Hain and Matsumoto \cite{HaGPS, MEM}. Let us suppose for simplicity\footnote{This is not strictly required for the following argument} that its affine ring is  an ind-object of the category of mixed Tate motives over $\Z$. There is a natural map 
$$\SL_2(\Z) \rightarrow G_B^{\rel}(\Q) \rightarrow G_B^{\eis}(\Q)$$
and hence a morphism, for any $\gamma \in \SL_2(\Z)$,
\begin{eqnarray} 
\mathrm{Isom}_{\MT(\Z)} (\omega_{dR},\omega_B)  & \To&  G^{\eis}_{dR}  \label{isomtoGdr} \\
\phi & \mapsto & \phi(\gamma) \nonumber 
\end{eqnarray} 
where $G^{\eis}_{dR}$ denotes  the $\Q$-affine group scheme underlying  $G^{\eis}_B(\C)$ which coincides with the $\Q$-de Rham structure of\cite{HaDe}.
The map $(\ref{isomtoGdr})$ defines  a canonical  homomorphism   $\SL_2(\Z) \rightarrow G^{\eis}_{dR} (P^{\mm})$
where $P^{\mm} =  \Or (\mathrm{Isom}_{\MT(\Z)} (\omega_{dR}, \omega_B))$ is the ring of motivic periods of $\MT(\Z)$.
Now Hain and Matsumoto show \cite{MEM}  that  there is a splitting  
\begin{equation} \label{Geissplits} 
G^{\eis}_{dR} \cong U^{\eis}_{dR} \rtimes \SL_2
\end{equation} so  the composition   $\SL_2(\Z) \rightarrow G^{\eis}_{dR} (P^{\mm}) \rightarrow U^{\eis}_{dR} (P^{\mm})$  defines  a cocycle
$$C^{\mm} \in Z^1 (\SL_2(\Z),  U^{\eis}_{dR} (P^{\mm}))\ .$$
Its period $\per ( C^{\mm}) \in  Z^1 (\SL_2(\Z),  U^{\eis}_{dR} (\C))$  is the image of the `canonical cocycle' $\mathcal{C}$ defined in \cite{MMV} in the group 
$U^{\eis}(\C)$.  Its coefficients are given by certain linear combinations of  regularised iterated integrals of Eisenstein series.

There is a canonical isomorphism $\Q(n)_{dR} \overset{\sim}{\rightarrow} \Q(n)_B$  given by  the comparison isomorphism $\mathrm{comp}_{B,dR}$ scaled by a suitable power of $2 \pi i$.  Since  $\Or(\SL_2)$ is a direct sum of pure Tate motives, we have in particular an  isomorphism
$M_0 \Or(\SL_2)_{dR}  \overset{\sim}{\rightarrow} M_0 \Or(\SL_2)_B$ of $\Q$-vector spaces, where $M$ is the weight filtration.
 Now consider the map
$$\Or(G^{\eis}_{dR}) \rightarrow M_0 \Or(G^{\eis}_{dR}) = M_0 (\Or(\SL_2))_{dR} \cong M_0(\Or(\SL_2))_B \rightarrow \Or(G_B^{\eis})$$
where the first  map  is the natural map to $\Or(G^{\eis}_{dR}) /F^1$, and the  second follows from  $(\ref{Geissplits})$, 
Dually, this defines a linear map $ \Or(G_B^{\eis})^{\vee} \rightarrow \Or(G^{\eis}_{dR})^{\vee}$, and hence a map $\SL_2(\Z) \rightarrow \Or(G^{\eis}_{dR})^{\vee}$.
One verifies that the element $STS^{-1} \in \SL_2(\Z)$, being lower triangular, maps to the element $STS^{-1} \in \Or(\SL_2)_{dR}^{\vee} \subset \Or(G^{\eis}_{dR})^{\vee}$. By applying this construction to the coefficients of $C^{\mm}_{STS^{-1}}$, and sending 
 $U^{\eis}_{dR} \rightarrow \mathrm{Aut}(\PP)$  we obtain an element 
 $$ C^{dR}_{STS^{-1}} \in \mathrm{Aut}(\PP)  (\Or(G^{dR}))$$
in the $\Or(G^{dR})$-points of $\mathrm{Aut}(\PP)$.
On the other hand, the structure of mixed Tate motive over $\Z$ on $\PP$ defines a morphism
$$I_1: G^{dR} \To \mathrm{Aut}(\PP)\ .$$
Unravelling all the definitions, one verifies  that 
$C^{dR}_{STS^{-1}}= I_1^{STS^{-1}} I_1^{-1}.$  Taking a logarithm and applying the Baker-Campbell-Hausdorff formula gives 
a relation between $i_1$ and $C^{dR}_{STS^{-1}}$. The latter is determined from $C^{dR}_S$  and $C^{dR}_T$ by the cocycle relations. From this, and knowledge of $C_T$,  one can read off the first few coefficients in the  expansion of $i_1(\sigma_{2n+1})$, for $n\geq 2$ from
the coefficients of $\zeta^{dR}(2n+1)$ in $C^{dR}_{S}$. 
This is the mechanism by which information about the map $i_1$ can be computed from double Eisenstein integrals. A different approach  would be via a motivic version of the analytic arguments of \cite{En}, which give  a relation between the Drinfeld associator and the image of $C_{STS^{-1}}$ in $U^{\eis}_{dR}$.
Further details   will be  given in a joint work with Hain.

\bibliographystyle{plain}
\bibliography{main}

\end{document}